\newtheorem{theorem}{Theorem}
\newtheorem{lemma}[theorem]{Lemma}
\newtheorem{remark}{Remark}
\newcommand{\diss}{u}
\newcommand{\bc}{g}
\date{\today}
\title{Learning Mesh Motion Techniques with Application to Fluid-Structure Interaction\footnote{Ottar Hellan and Miroslav Kuchta acknowledge support from the Research Council of Norway, grant 303362. Johannes Haubner and Marius Zeinhofer acknowledge support from the Research Council of Norway, grant 300305.}}
\author[1]{Johannes Haubner\thanks{johannes.haubner@uni-graz.at}}
\author[2]{Ottar Hellan}
\author[2]{Marius Zeinhofer}
\author[2]{Miroslav Kuchta}
\affil[1]{University of Graz, Universitätsplatz 3, 8010 Graz, Austria}
\affil[2]{Simula Research Laboratory, Kristian Augusts gate 23, 0164 Oslo, Norway}
\begin{document}

\maketitle

\begin{abstract}
     Mesh degeneration is a bottleneck for fluid-structure interaction (FSI) simulations and for shape optimization via the method of mappings. In both cases, an appropriate mesh motion technique is required. The choice is typically based on heuristics, e.g., the solution operators of partial differential equations (PDE), such as the Laplace or biharmonic equation. Especially the latter, which shows good numerical performance for large displacements, is expensive. Moreover, from a continuous perspective, choosing the mesh motion technique is to a certain extent arbitrary and has no influence on the physically relevant quantities. Therefore, we consider approaches inspired by machine learning. We present a hybrid PDE-NN approach, where the neural network (NN) serves as parameterization of a coefficient in a second order nonlinear PDE. We ensure existence of solutions for the nonlinear PDE by the choice of the neural network architecture. Moreover, we present an approach where a neural network corrects the harmonic extension such that the boundary displacement is not changed. In order to avoid technical difficulties in coupling finite element and machine learning software, we work with a splitting of the monolithic FSI system into three smaller subsystems. This allows to solve the mesh motion equation in a separate step. We assess the quality of the learned mesh motion technique by applying it to a FSI benchmark problem. In addition, we discuss generalizability and computational cost of the learned mesh motion operators.
\end{abstract}
 
\section{Introduction}

Following the past few decades' success of deep learning in fields such as image processing and speech recognition \cite{lecun_deep_learning}, researchers have turned their use to solving differential equations of scientific computing. 
In \cite{lagaris1998artificial, raissi2019physics}, it is proposed to solve forward and inverse PDEs by parameterizing the solution as a neural network and solving the non-convex optimization problem of minimizing the squared residual of the PDE's strong form, an approach called Physics-Informed Neural networks that has many variants \cite{Cuomo2022_sciml_pinns}. Moreover models known as neural operators have been developed, that are trained to solve an entire class of PDEs at the same time, for instance the DeepONet \cite{Lu2021deeponet} and the Fourier Neural Operator \cite{li2020fourier_neural_operator}, in a paradigm known as operator learning.
Another line of work is to use neural networks to accelerate parts of numerical algorithms that are driven by heuristics or parameter tuning. If a parameter in an algorithm affects its performance and must be tuned by hand, it can instead be chosen during runtime by a neural network with access to some performance indicator. For instance, in \cite{Antonietti2023_amg-ann, tassi2021machine} the authors use neural networks to respectively pick a stabilization parameter of a finite element scheme and to automate the selection of a parameter affecting how coarse problems are constructed in algebraic multigrid methods. In \cite{BECK2020shock_detection}, the authors use a convolutional neural network to accurately detect and localize shocks, allowing more efficient shock capturing and avoiding parameter tuning. Automating the selection of parameters in algorithms can improve both the efficiency of the numerical algorithm itself and the scientific workflow, by allowing the user to spend time on other tasks. If a step in an algorithm is driven by heuristics, there is no obvious way of making the optimal choice and learning how to do this from data might be feasible.     
We take inspiration from operator learning and attempt to solve with machine learning techniques the problem of boundary deformation extension, a computational subproblem that is typically defined by the solution of heuristically chosen PDEs. We do this using two distinct approaches. First, we present an approach where a neural network parameterizes a PDE, the solution of which defines an extension operator. Secondly, we present an approach where a neural network defines a correction to a standard extension operator.

Extending boundary deformation onto the whole domain is a task that can be a crucial and limiting factor in applications, especially in settings, where one wants to avoid remeshing techniques to reduce computational effort, see e.g. \cite{stein2004automatic, johnson1994mesh}. It appears in monolithic arbitrary Lagrangian-Eulerian (ALE) formulations of fluid-structure interaction (FSI) problems, e.g., \cite{Wick, Shamanskiy2020}, and the method of mappings for partial differential equation (PDE) constrained shape optimization problems, e.g., \cite{MuellerKuehnetal, onyshkevych2021mesh, pinzon2023fluid}.
The task can be stated as follows. Let $\Omega \subset \mathbb R^d$, $d \in \lbrace 2, 3 \rbrace$, be a domain and $\partial \Omega$ be its boundary. Given a boundary displacement $\bc: \partial \Omega \to \mathbb R^d$, one needs to apply an extension operator to find a deformation field $\diss: \Omega \to \mathbb R^d$ such that $\diss \vert_{\partial \Omega} = \bc$ and $(\mathrm{id} + \diss)(\Omega)$ is a well-defined (Lipschitz) domain. 
This imposes requirements on $\bc$ and the extension operator. 
Among other things, $\mathrm{id} + \diss: \Omega \to (\mathrm{id} + \diss) (\Omega)$ needs to be bijective, which also implies that $\bc$ has to be chosen appropriately (e.g. not leading to a self-intersecting boundary of $\Omega$).  For computational purposes, it is mandatory that for any fixed mesh that discretizes $\Omega$ the transformed mesh is non-degenerate, see Figure \ref{Figure_mesh_degen}. 
In ALE formulations of fluid-structure interaction,  the task of finding a suitable extension operator is typically based on heuristic choices, and has no (from a continuous perspective) or little (from a numerical perspective) influence on the physical solution as long as it is ensured that it fulfills the above described requirements (and regularity requirements), see e.g. \cite{donea2004arbitrary}. At the same time, it is the limiting factor for numerical simulations as soon as these requirements are violated. 
In this work, we apply ideas from machine learning to find a suitable extension operator for fluid-structure interaction problems formulated in the ALE framework.

\begin{figure}
    \includegraphics[width=0.3\textwidth]{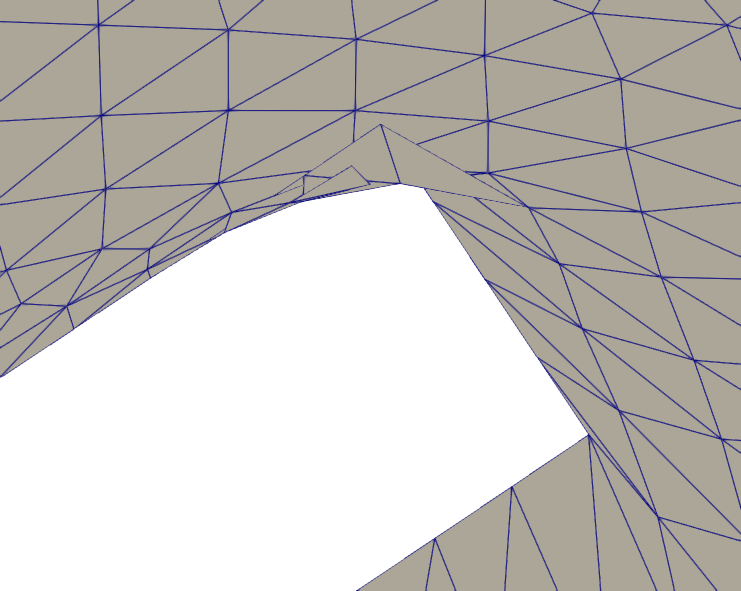}
    \includegraphics[width=0.3\textwidth]{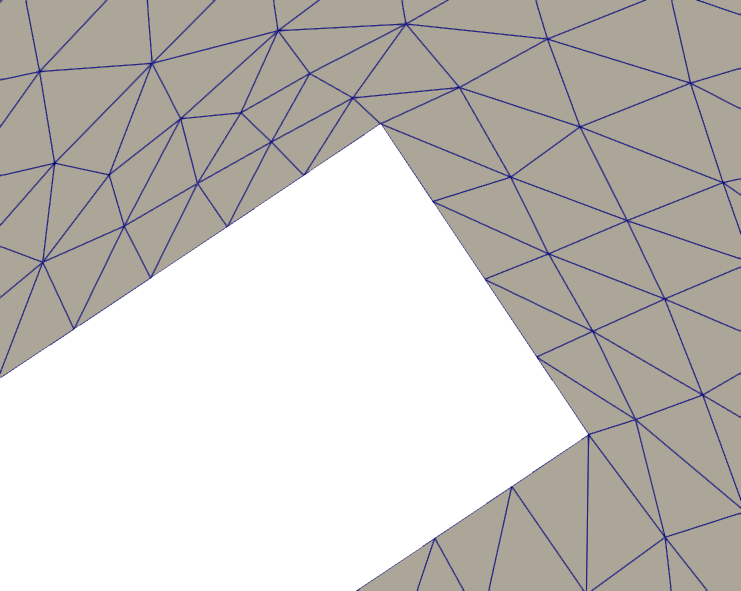}
    \centering
    \includegraphics[width=0.3\textwidth]{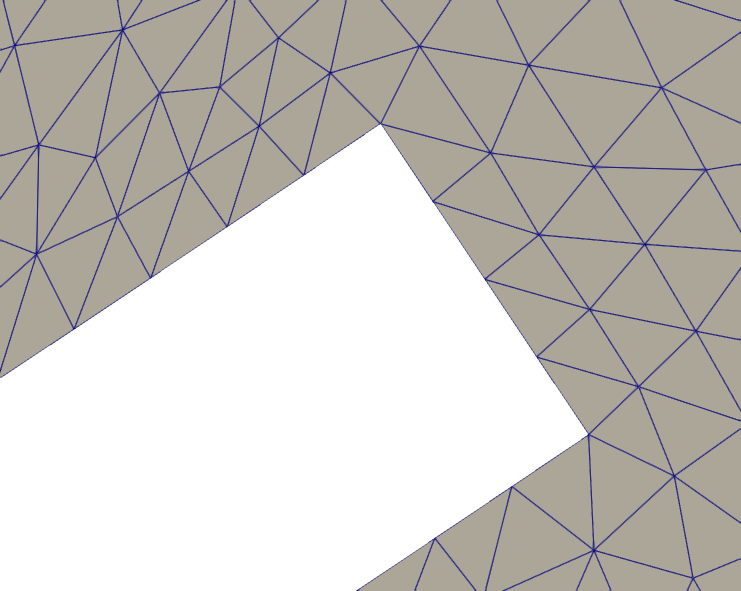}
    \vspace{-10pt}
    \caption{Mesh degeneration at the top corner of the elastic structure for harmonic extension (left), but not for the learned extension operator (middle; here shown for hybrid PDE-NN approach with artificial training data) and biharmonic extension (right).}
    \label{Figure_mesh_degen}
\end{figure}

Several approaches to define the extension operator have been introduced in the literature. In \cite{Wick}, harmonic extension operators, a linear elastic and a biharmonic model have been compared in the context of monolithic ALE formulations of FSI problems. The harmonic extension operator is obtained by solving the PDE
\begin{align}\label{eq:harmonic_extension}
    - \Delta \diss = 0 ~~ \text{in } \Omega, \quad \diss = \bc ~~\text{on } \partial \Omega.
\end{align}
The linear elastic model, e.g. also used in \cite{stein2004automatic}, is given by 
\begin{align*}
    - \mathrm{div}(\eta_\lambda \mathrm{tr}(\epsilon) I + 2 \eta_\mu \epsilon) = 0 ~~ \text{in } \Omega, \quad \diss = \bc ~~\text{on } \partial \Omega, 
\end{align*}
with $\epsilon = \frac12 (\nabla \diss + \nabla \diss^\top)$ and scalar valued functions $\eta_\lambda, \eta_\mu$ that are chosen, e.g., mesh dependent. 
In the context of shape optimization, \cite{SchulzSiebenborn} chooses $\eta_\lambda = 0$ and $\eta_\mu = \mu$, where $\mu$ solves the PDE
\begin{align*} 
    - \Delta \mu = 0 ~~\text{in } \Omega, \quad \mu = \mu_{\max} ~~ \text{on } \Gamma, \quad \mu = \mu_{\min} ~~\text{on } \partial \Omega \setminus \Gamma, 
\end{align*}
for constants $\mu_{\max,}~ \mu_{\min} > 0$. The manually chosen boundary value allows to increase rigidity in areas where large deformations occur, e.g., around $\partial \Omega$.  Finally, for large displacements the biharmonic model given by
\begin{align}\label{eq:biharmonic_equation}
    \Delta^2 \diss = 0 ~~\text{in } \Omega,  \quad \diss = \bc, ~~ \nabla \diss \cdot n = 0 ~~\text{on } \partial \Omega,
\end{align}
shows better numerical behavior compared to the harmonic or linear elastic extension, see e.g. \cite{Wick}. However, it is also the most expensive of these options since it requires either $H^2$-conforming finite elements, or other techniques such as a mixed approach, cf. e.g. \cite[Sec. 4.4.5]{Wick}, or weakly imposed continuity of the normal derivatives \cite{georgoulis2009discontinuous}. 
Recently, in the context of shape optimization, also different types of extension equations were used. In contrast to fluid-structure interaction simulations, the choice of nonlinear extension operators is justified in many shape optimization applications since solving governing time-dependent PDEs for the state equation is the computational bottleneck of the shape optimization simulation. Differently from the fluid-structure interaction application, the choice of the extension operator can be part of the modeling and, e.g., used to define a shape metric based on the corresponding Steklov-Poincar\'e operator \cite{Schulz2016}. In \cite{MuellerKuehnetal}, the $p$-Laplacian for $p \geq 2$ was introduced as an extension equation. For $1 < p < \infty$, the problem is given by 
\begin{align*}
    - \mathrm{div} ( \| \nabla \diss \|^{p-2} \nabla \diss ) = 0 ~~ \text{in } \Omega, \quad \diss = \bc ~~\text{on } \partial \Omega, 
\end{align*} 
where $\| \cdot \|$ denotes the Frobenius norm.
The nonlinearity increases the rigidity for increasing deformation gradient. 

In the context of mesh extension operators, machine learning algorithms 
have been applied in several works. For example, 
in \cite{stadler2011-mm_ann}, it is proposed to use neural networks as an ansatz class for mesh motion, by training a network of only coordinate inputs to match the deformation of the boundary vertices. Working in the PINNs methodology, \cite{AYGUN2023106660} learns a deformation map by linear elastic mesh motion. 
In \cite{NEURIPS2022_mesh_movement}, neural networks are trained to produce mesh motion for $r$-adaptivity, taking inputs describing the PDE instance.

In this work we develop two approaches to learn an extension operator. First, we consider a hybrid PDE-NN model \cite{HybridNNPDE}.
We observe that the harmonic extension operator and the $p$-Laplacian are special cases of the more general formulation 
\begin{align}
    - \mathrm{div}(\bar \alpha(\theta, \xi, \diss, \nabla \diss) \nabla \diss) = 0 ~~ \text{in } \Omega, \quad \diss = \bc ~~\text{on }  \partial \Omega, 
    \label{PDEalpha}
\end{align}
where $\bar \alpha$ is a scalar valued function, $\xi$ denotes the coordinates and $\theta$ parameters (e.g., $\theta = p$, $\theta = \mu$). Here we shall parameterize $\bar \alpha$ by using a neural network and in turn $\theta$ represents the network's 
weights and biases. The choice of $\bar{\alpha}$ is made based on theoretical considerations such that existence of solutions of \eqref{PDEalpha} is ensured. This requires a sophisticated choice of the architecture and the weights. In the scope of the work, for the sake of simplicity, we restrict ourselves to $\bar \alpha(\theta, \xi, \diss, \nabla \diss) = \alpha (\theta, \| \nabla \diss \|^2)$. We make a theoretically funded choice for the neural net, which serves as parameterization for $\bar \alpha$, such that existence of solutions of \eqref{PDEalpha} is ensured. The choice of $\bar \alpha$ then allows us to prove well-posedness of the extension operator by applying ideas from the proof that the $p$-Laplace equations have unique solutions. 
Moreover, we follow a supervised learning approach that aims to find 
a neural network such that the solution of \eqref{PDEalpha} is close to the solution of the biharmonic extension. The resulting extension equation requires a nonlinear PDE solve. In order to make it suitable in the setting of the unsteady FSI equations, we follow a ``lagging nonlinearity'' approach, i.e. we consider the linear extension equation $- \mathrm{div}(\bar \alpha(\theta, \xi, \bar \diss, \nabla \bar \diss) \nabla \diss) = 0$ with $\bar \diss$ being the deformation variable of the previous time-step in order to compute the new deformation field.

Our second approach is to learn an extension operator where a neural network corrects the harmonic extension operator. The choice of network architecture acts locally and ensures that the resulting extension operator respects the boundary condition constraints, see \eqref{eq:corrected_harm_extension}. Hence, it differs from other approaches where such constraints are penalized in the loss function.
This is done by including in the network architecture multiplication of the output by a function vanishing on the boundary, which is found by solving a Poisson problem on the computational domain $\Omega$. Also here we follow a supervised learning approach, training the corrected harmonic extension operator to match the biharmonic extension operator.

In the hybrid PDE-NN approach we generalize the harmonic extension to a class of extensions and then select one of them by training a neural network. In the corrected harmonic extension approach, we instead use the harmonic extension as a base that is improved by a trained correction.
Both approaches incorporate prior knowledge in the design of the neural networks, e.g. via respecting theoretical requirements or sophisticated choice of features.
This is a common strategy. Features like periodicity, can be guaranteed in the network architectures by using a feature transformation, see e.g. \cite{yazdani2020systems}.
Theoretical considerations and classical discretization schemes can be used as prior knowledge and 
be respected in the NN architecture. The choice in \cite{ruthotto2020deep} 
is, e.g., based on theory of unsteady partial differential equations. On unstructured meshes, the connection between message-passing neural networks and classical discretization schemes is demonstrated in \cite{lienen2022learning}. Physics-informed neural networks \cite{raissi2019physics} incorporate knowledge or modeling assumptions on the physical model in the objective function. The hybrid PDE-NN approach proposed in \cite{innes2019differentiable, Xu, HybridNNPDE} can itself be viewed as a neural network approach that learns a solution operator $g \mapsto u$ of \eqref{PDEalpha} from data and involves a solve of a partial differential equation in the output layer, which makes evaluations expensive compared to standard neural network architectures. Learning partial differential equations from data is also addressed e.g. in \cite{HollerM, HintermuellerM, Voelkneretal}.
    
To assess the quality of the learned extension operator, we
apply it to the FSI benchmark II \cite{FSIbenchmark}. More precisely, we do a comparison similar to 
\cite[Fig. 6]{Wick}. In order to be able to include different extension operators and assess their quality without having to modify the FSI system, we introduce a novel splitting scheme for the monolithic system. 
We discuss the hybrid PDE-NN approach in section \ref{sec::hybridNNPDE}. In section \ref{sec:nn-corrected} we introduce the approach of correcting standard extension operators with a neural network. Section \ref{sec::FSI} presents the novel FSI splitting scheme and numerical results for the FSI benchmark II. Moreover, generalizability and computational costs of the approaches are discussed in sections \ref{sec:generalizability_comp_cost} and \ref{sec:runtime_comparison}.

\section{Approach 1: Hybrid NN-PDE Approach}
\label{sec::hybridNNPDE}

In order to find the extension operator via \eqref{PDEalpha}, we formulate an optimization problem to find $\bar \alpha$:

Given boundary displacements $\bc^i$, $i \in \lbrace 1, \ldots, N_d \rbrace$, $N_d \in \mathbb N$, we search for weights $\theta$ such that
\begin{align}
    \begin{split}
        \min_{\theta \in \Theta, \diss \in W} \frac1{N_d} \sum_{i=1}^{N_d} |\!|\!| \diss^i - \diss_{\mathrm{biharm}}^i|\!|\!|^2 &+ \lambda \mathcal R(\theta) \\
        \text{s.t.} \quad - \mathrm{div}(\bar \alpha(\theta, \xi, \diss^i, \nabla \diss^i) (\nabla \diss^i)) = 0 \quad & \text{in } \Omega , \\
        \diss^i = \bc^i \quad & \text{on } \partial \Omega,
    \end{split}
    \label{optimizationprob}
\end{align}
where $\lambda$ denotes a regularization parameter,  $\mathcal R(\theta)$ a regularization term, $\Theta$ and $W$ closed subsets of suitable Banach spaces, and $|\!|\!| \cdot |\!|\!|$ is defined by 	\begin{align}
    |\!|\!| \cdot |\!|\!|^2 = \| \cdot \|_{L^2(\Omega)}^2 + \| \nabla \cdot \|_{L^2(\Omega)}^2.
    \label{normdef}
\end{align} 
Moreover,  $\diss_{\mathrm{biharm}}^i = \diss_{\mathrm{biharm}}^i(g^i)$ solves the biharmonic equation for boundary conditions $\bc = \bc^i$. 
In the numerical realizations in this work, we neglect the regularization term and set $\lambda = 0$. For future work, the term $\mathcal{R}$ can be used to add mesh quality measures to the objective function.

In order for the optimization problem to be well-defined, we have to choose $\bar \alpha$ in a way such that the PDE constraint in \eqref{optimizationprob} is solvable for all admissible $\theta \in \Theta$. This requires a suitable choice of $\Theta$ and $\bar \alpha$, which we address in section \ref{SubsectionNN}. We choose $\bar \alpha$ such that it is defined via the derivative of a convex function. Conformal approximations of convex functions via finite elements or polynomials is challenging \cite{chone2001non, wachsmuth2017conforming}. It requires, e.g., the choice of appropriate basis functions and non-negative coefficients, or a set of constraints. Therefore, we model $\bar \alpha$ via input convex neural nets, see \cite{AmosXuKolter, IOCNN} and section \ref{SubsectionNN}.

\subsection{Choice of \texorpdfstring{$\bar \alpha$}{bar alpha}}

The partial differential equation \eqref{PDEalpha} is not necessarily solvable if we choose
$\bar \alpha(\theta, \xi, \diss, \nabla \diss)$ arbitrarily.
In order to find conditions for $\bar \alpha$ under which \eqref{PDEalpha} can be ensured to be uniquely solvable, we take inspiration from the $p$-Laplace equations.
The weak formulation of the $p$-Laplace equations, $p>1$, can be viewed as the optimality conditions of the optimization problem 
\begin{align}
    \min_{\diss \in W} \int_\Omega \| \nabla \diss \|^p d \xi.
    \label{pLopt}
\end{align}
Since \eqref{pLopt} is a convex optimization problem with strictly convex objective, \eqref{pLopt} has a unique solution. More precisely, we have the following result.

\begin{lemma}[see {\cite[Thm. 2.16]{lindqvist2017}}]
    Let $\Omega$ be a bounded Lipschitz domain, $p \geq 2$,  $g \in W^{1,p}(\Omega)$.
    Moreover, let \begin{align}W = \lbrace \diss \in W^{1,p}(\Omega)~:~ \diss \vert_{\partial \Omega} = g\vert_{\partial \Omega} \rbrace. \label{Wspace} \end{align} Then, there exists a unique minimizer of the optimization problem 
    \eqref{pLopt} and the solution of the optimization problem is characterized by 
    \begin{align*}
        (\| \nabla \diss \|^{p-2} \nabla \diss, \nabla \eta)_{L^2(\Omega)} = 0 \quad \forall \eta \in W_0^{1,p}(\Omega).
    \end{align*}
    \label{Lemma24}
\end{lemma}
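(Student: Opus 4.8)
The plan is to treat \eqref{pLopt} as a problem in the calculus of variations and to exploit throughout the convexity of the integrand. Write $J(\diss) = \int_\Omega \|\nabla \diss\|^p \, d\xi$ for the objective. The structural facts I would rely on are that $W^{1,p}(\Omega)$ is reflexive for $1 < p < \infty$ (in particular for $p \geq 2$), that the matrix map $\xi \mapsto \|\xi\|^p$ is convex and, for $p > 1$, strictly convex, and that $W$ is a nonempty closed affine subspace of $W^{1,p}(\Omega)$. For existence I would use the direct method. First I would verify coercivity of $J$ over $W$: writing any $\diss \in W$ as $\diss = g + v$ with $v \in W_0^{1,p}(\Omega)$, the Poincaré inequality on $W_0^{1,p}(\Omega)$ gives $\|v\|_{W^{1,p}(\Omega)} \leq C \|\nabla v\|_{L^p(\Omega)} \leq C(\|\nabla \diss\|_{L^p(\Omega)} + \|\nabla g\|_{L^p(\Omega)})$, so any minimizing sequence $(\diss_n) \subset W$ is bounded in $W^{1,p}(\Omega)$. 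By reflexivity I would pass to a weakly convergent subsequence $\diss_n \rightharpoonup \diss$; since $W$ is convex and closed it is weakly closed, whence $\diss \in W$. Finally $J$ is convex and strongly continuous, hence weakly lower semicontinuous, giving $J(\diss) \leq \liminf_n J(\diss_n) = \inf_W J$, so $\diss$ is a minimizer.

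For uniqueness I would invoke strict convexity. Suppose $\diss_1, \diss_2 \in W$ are two minimizers. If $\nabla \diss_1 \neq \nabla \diss_2$ on a set of positive measure, then strict convexity of $\xi \mapsto \|\xi\|^p$ yields $\|\nabla \tfrac12(\diss_1 + \diss_2)\|^p < \tfrac12 \|\nabla \diss_1\|^p + \tfrac12 \|\nabla \diss_2\|^p$ there and $\leq$ elsewhere, so integrating gives $J(\tfrac12(\diss_1+\diss_2)) < \tfrac12 J(\diss_1) + \tfrac12 J(\diss_2) = \inf_W J$, contradicting that the midpoint lies in $W$. Hence $\nabla \diss_1 = \nabla \diss_2$ almost everywhere, so $\diss_1 - \diss_2$ is constant, and the common boundary trace forces this constant to vanish, i.e. $\diss_1 = \diss_2$.

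For the characterization I would compute the first variation. Fix the minimizer $\diss$ and any $\eta \in W_0^{1,p}(\Omega)$, so that $\diss + t\eta \in W$ for all $t \in \mathbb R$, and set $\phi(t) = J(\diss + t\eta)$. Differentiating the integrand yields $\phi'(0) = p \int_\Omega \|\nabla \diss\|^{p-2} \nabla \diss : \nabla \eta \, d\xi$, and since $t = 0$ minimizes the convex function $\phi$ we obtain $\phi'(0) = 0$, which is exactly the stated weak form after dividing by $p$. Conversely, because $J$ is convex, this stationarity condition is also sufficient for $\diss$ to be the global minimizer, so the characterization is in fact an equivalence.

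I expect the main technical obstacle to be justifying the differentiation under the integral sign that produces $\phi'(0)$. This requires an integrable majorant for the difference quotients of $t \mapsto \|\nabla \diss + t \nabla \eta\|^p$ near $t = 0$; the natural candidate is $p(\|\nabla \diss\| + \|\nabla \eta\|)^{p-1} \|\nabla \eta\|$, which lies in $L^1(\Omega)$ by Hölder's inequality since $(\|\nabla \diss\| + \|\nabla \eta\|)^{p-1} \in L^{p/(p-1)}(\Omega)$ while $\|\nabla \eta\| \in L^p(\Omega)$. Dominated convergence then legitimizes the interchange. Alternatively one can bypass the majorant entirely by noting that the difference quotients of the convex function $\phi$ are monotone in $t$ and passing to the limit monotonically.
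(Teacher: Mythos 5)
Your proposal is correct and follows essentially the same route the paper takes: the paper defers the proof of this lemma to \cite[Thm.~2.16]{lindqvist2017} and reproduces that argument for its generalization in Lemma~\ref{Lemma23}, namely strict convexity of the integrand for uniqueness, the direct method (Poincar\'e inequality, bounded minimizing sequence, weak compactness, weak lower semicontinuity from convexity plus continuity) for existence, and first-order optimality conditions for the weak-form characterization. Your treatment of the first variation via a dominated-convergence majorant, and your remark that convexity makes the stationarity condition sufficient as well as necessary, are slightly more explicit than the paper's appeal to Fr\'echet differentiability (Lemma~\ref{LemmaF}), but the substance is identical.
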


We prove a similar result for a more general class of PDEs. Hence, we consider a mapping $\Lambda: \mathbb R \to \mathbb R$ and the optimization problem
\begin{align}
    \min_{\diss \in W} \int_\Omega \Lambda(\| \nabla \diss \|^2) d \xi
    \label{opt_prob_lambda}
\end{align}
and work with
\begin{align}
    \alpha (\|\nabla \diss\|^2 ) = 2 \Lambda^\prime(\| \nabla \diss \|^2).
    \label{definition_alpha}
\end{align}

\begin{lemma}
    Let $\Omega$ be a bounded Lipschitz domain and $g$, $W$ be defined as in lemma \ref{Lemma24}.
    Let $\Lambda: \mathbb R \to \mathbb R$ be such that $\Lambda$ is convex,
     strictly increasing,
     continuously differentiable, and assume that
    there exist $a,b,d > 0$, $c \in \mathbb R$ and $p > 2$ (or $p=2$ and $\Lambda$ being affine) such that \begin{align}
            &|\Lambda(t)| \leq a + b t^\frac{p}2,
            \label{upperbound} \\
            &|\Lambda^\prime (t) | \leq c + d t^\frac{p}{p-2}.
            \label{diffAlpha}
        \end{align}
    Moreover, let $\alpha$ be defined by \eqref{definition_alpha}. Then the mapping \begin{align}
        F: W \to \mathbb R, ~\diss \mapsto \int_\Omega \Lambda(\| \nabla \diss \|^2) d \xi,
    \end{align}
    is convex, continuous and Fr\'echet differentiable with derivative
    \begin{align}
        F'(\diss): W^{1,p}_0(\Omega) \to \mathbb R, ~ h_\diss \mapsto \int_\Omega \alpha(\|\nabla \diss\|^2) (\nabla \diss : \nabla h_\diss) d \xi.
    \end{align}
    \label{LemmaF}
\end{lemma}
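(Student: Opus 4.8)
The plan is to treat the three assertions in increasing order of difficulty, using continuity as a by-product of the differentiability argument. First I would note that $F$ is finite on $W$: the bound \eqref{upperbound} gives $|\Lambda(\|\nabla\diss\|^2)| \le a + b\|\nabla\diss\|^p$ pointwise, so $|F(\diss)| \le a|\Omega| + b\|\nabla\diss\|_{L^p(\Omega)}^p < \infty$ for every $\diss \in W \subset W^{1,p}(\Omega)$. For convexity I would use that $M \mapsto \|M\|^2$ is convex on matrices and that $\Lambda$ is convex and nondecreasing (being strictly increasing), so the composition $M \mapsto \Lambda(\|M\|^2)$ is convex; since $\diss \mapsto \nabla\diss$ is affine on $W$ and the integral is monotone and linear, integrating the pointwise convexity inequality yields convexity of $F$.

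For differentiability, the candidate derivative is read off by differentiating under the integral: with $\alpha = 2\Lambda'$ from \eqref{definition_alpha} and the chain rule applied to $s \mapsto \Lambda(\|\nabla\diss + s\nabla h_\diss\|^2)$ one arrives at $h_\diss \mapsto \int_\Omega \alpha(\|\nabla\diss\|^2)(\nabla\diss : \nabla h_\diss)\,d\xi$. I would first verify this is a bounded linear functional on the tangent space $W_0^{1,p}(\Omega)$: the growth bound \eqref{diffAlpha} controls $\alpha(\|\nabla\diss\|^2)\nabla\diss$ in the dual-exponent space $L^{p/(p-1)}(\Omega)$, so pairing with $\nabla h_\diss \in L^p(\Omega)$ through Hölder's inequality gives a finite estimate proportional to $\|h_\diss\|_{W^{1,p}(\Omega)}$. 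Concretely, the decisive fact is that the superposition (Nemytskii) operator $N \colon M \mapsto \alpha(\|M\|^2)M$ maps $L^p(\Omega)$ into $L^{p/(p-1)}(\Omega)$, which is exactly what the growth condition on $\Lambda'$ is tailored to guarantee.

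Next I would establish the Gâteaux derivative by writing the difference quotient as $\int_\Omega t^{-1}(\Lambda(\|\nabla(\diss + th_\diss)\|^2) - \Lambda(\|\nabla\diss\|^2))\,d\xi$, applying the mean value theorem to the integrand and passing to the limit $t \to 0$ by dominated convergence, where the dominating function is supplied by \eqref{diffAlpha} together with $\nabla\diss, \nabla h_\diss \in L^p(\Omega)$; this identifies the Gâteaux derivative with the candidate $F'(\diss)$. To upgrade to Fréchet differentiability, and thereby also deduce continuity of $F$, I would show that $\diss \mapsto F'(\diss)$ is continuous from $W$ into $(W_0^{1,p}(\Omega))^*$, invoking the standard fact that a functional which is everywhere Gâteaux differentiable with continuous Gâteaux derivative is continuously Fréchet differentiable. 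By Hölder's inequality this continuity reduces to the continuity of the Nemytskii operator $N$ from $L^p(\Omega)$ into $L^{p/(p-1)}(\Omega)$.

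The main obstacle is precisely this last Nemytskii continuity statement. Its proof rests on the Krasnoselskii-type continuity theorem for superposition operators, whose hypotheses are met here by the continuity of $t \mapsto \alpha(t)$ (inherited from $\Lambda \in C^1$) and the growth condition \eqref{diffAlpha}. The subtlety is that $L^p$-convergence of $\nabla\diss_n$ only yields almost-everywhere convergence along a subsequence, so the argument must combine a.e.\ convergence of $\alpha(\|\nabla\diss_n\|^2)\nabla\diss_n$ with an equi-integrability bound from \eqref{diffAlpha} (a generalized dominated convergence argument) to conclude convergence of the full sequence in $L^{p/(p-1)}(\Omega)$. The case $p = 2$ with $\Lambda$ affine is degenerate: there $\alpha$ is constant, $N$ is linear, and all three claims are immediate.
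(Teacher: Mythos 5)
Your proposal is correct, but it takes a genuinely different route from the paper's. The paper factors $F = f_3 \circ f_2 \circ f_1$ with $f_1 : v \mapsto \nabla v$ (bounded linear, $W^{1,p} \to L^p$), $f_2 : B \mapsto \|B\|^2$ (superposition, $L^p \to L^{p/2}$), and $f_3 : \beta \mapsto \int_\Omega \Lambda(\beta)\, d\xi$ ($L^{p/2} \to \mathbb{R}$); it quotes superposition-operator theory (Tr\"oltzsch) for the Fr\'echet differentiability of $f_2$ and uses the growth bounds \eqref{upperbound}, \eqref{diffAlpha} for that of $f_3$, then concludes by the chain rule, so Fr\'echet differentiability (and continuity) come out directly with no G\^ateaux step. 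Your convexity argument is essentially identical to the paper's. Where you differ is in the analytic core: you compute the G\^ateaux derivative by difference quotients, the mean value theorem and dominated convergence, and then upgrade to continuous Fr\'echet differentiability via continuity of $\diss \mapsto F'(\diss)$, which you reduce by H\"older to Krasnoselskii continuity of the Nemytskii map $M \mapsto \alpha(\|M\|^2)M$ from $L^p$ into $L^{p/(p-1)}$. The paper's decomposition buys modularity and citability — each factor's differentiability is a textbook fact — while your route buys robustness: it needs only \emph{continuity} of a superposition operator, a strictly weaker and easier-to-verify input than Fr\'echet differentiability of one, at the price of invoking the standard G\^ateaux-to-Fr\'echet theorem. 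One caveat shared by both arguments: everything hinges on $\Lambda'(\|\nabla\diss\|^2)\nabla\diss$ landing in $L^{p/(p-1)}$ (equivalently, $\Lambda'(\beta) \in L^{p/(p-2)}$ for $\beta \in L^{p/2}$), which is guaranteed by a growth exponent $(p-2)/2$ in \eqref{diffAlpha}; the exponent $p/(p-2)$ as printed yields this only for $p \geq 3 + \sqrt{5}$, so your reading of what the condition is ``tailored to guarantee'' is the intended one rather than the literal one — a discrepancy in the paper's hypothesis, not a gap in your argument.
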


\begin{proof}
    In order to show continuity and differentiability of $F$, we rewrite
    $F (\diss)= f_3 \circ f_2 \circ f_1 (\diss),$
    where 
    \begin{align}
        \begin{split}
            &f_1: \quad W^{1,p}(\Omega)^d \to L^p(\Omega)^{d \times d}, \quad v \mapsto \nabla v, \\
            &f_2: \quad L^p (\Omega)^{d \times d } \to L^{\frac{p}2} (\Omega), \quad B \mapsto \| B \|^2, \\
            &f_3: \quad L^{\frac{p}{2}}(\Omega) \to \mathbb R, \quad \beta \mapsto \int_\Omega \Lambda(\beta) d \xi.
        \end{split}
    \end{align}
    Since $f_1$ is linear, $f_2$ is strictly convex and $\Lambda$ is strictly increasing and convex, $F$ is convex.
    Due to \cite[Sec. 4.3.3]{Troeltzsch} we know that the superposition operator
    $f_2$ is continuous and continuously differentiable with derivative $f_2^\prime(B): L^p(\Omega)^{d \times d} \to L^{\frac{p}2}(\Omega), ~ h_B \mapsto 2 (B: h_B)$, where $B:h_B := \sum_{i,j} B_{ij} (h_B)_{ij}$. Due to \eqref{upperbound} and \eqref{diffAlpha}, we further obtain continuity and differentiability of $f_3$ with derivative $f_3^\prime(\beta): L^{\frac{p}2}(\Omega) \to \mathbb R, ~ h_\beta \mapsto \int_\Omega \Lambda^\prime(\beta) h_\beta d \xi$. Due to linearity and boundedness, $f_1$ is continuous and differentiable with $f_1^\prime(v): W^{1,p}(\Omega)^d \to L^p(\Omega)^{d \times d}, ~ h_v \mapsto \nabla h_v$. Hence, by applying the chainrule we obtain continuity and differentiability of $F: W^{1,p}(\Omega)^d \to \mathbb R$ with derivative $F^\prime(v)(h_v) = \int_\Omega 2 \Lambda^\prime(\| \nabla v \| ^2) (\nabla v : \nabla h_v) d \xi$. Since $W$ is a closed, affine linear subspace of $W^{1,p}(\Omega)^d$, this concludes the proof. 
\end{proof}

\begin{lemma}
    Let the prerequisites of lemma \ref{LemmaF} be fulfilled. Assume further that there exist $e \geq 0$, $f > 0$, such that
    \begin{align}
        |\Lambda(t)| \geq e + f t^\frac{p}2.
        \label{lowerbound}
    \end{align}
    Moreover, let $g \in W^{1,p}(\Omega)$ and $W$ be defined by \eqref{Wspace}.
    Then, there exists a unique minimizer of the optimization problem 
    \eqref{pLopt} and the solution of the optimization problem is characterized by 
    \begin{align}
        (\alpha( \|\nabla \diss \|^2) \nabla \diss, \nabla \eta)_{L^2(\Omega)} = 0 \quad \forall \eta \in W_0^{1,p}(\Omega).
        \label{firstordercond}
    \end{align}
    \label{Lemma23}
\end{lemma}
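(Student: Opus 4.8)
The plan is to establish existence by the direct method of the calculus of variations, uniqueness by upgrading convexity to strict convexity, and the variational characterization \eqref{firstordercond} from the fact, provided by Lemma \ref{LemmaF}, that the objective $F(\diss) = \int_\Omega \Lambda(\|\nabla\diss\|^2)\,d\xi$ is convex and Fr\'echet differentiable. First I would prove that $F$ is coercive on $W$. The lower bound \eqref{lowerbound}, combined with $\Lambda$ being strictly increasing and continuous, yields an estimate of the form $\Lambda(t) \ge f\,t^{p/2} - C_0$ for $t = \|\nabla\diss\|^2 \ge 0$ with some $C_0 \ge 0$: on the bounded (possibly empty) set where $\Lambda \le 0$ the continuous function $\Lambda$ is bounded below, while on the set where $\Lambda > 0$ the absolute value in \eqref{lowerbound} may be dropped. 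Integrating gives $F(\diss) \ge f\,\|\nabla\diss\|_{L^p(\Omega)}^p - C_0|\Omega|$. Since every $\diss \in W$ differs from the fixed $\bc \in W^{1,p}(\Omega)$ by an element of $W_0^{1,p}(\Omega)$, the Poincar\'e--Friedrichs inequality bounds $\|\diss\|_{W^{1,p}(\Omega)}$ by $\|\nabla\diss\|_{L^p(\Omega)}$ up to additive and multiplicative constants, so $F(\diss)\to\infty$ as $\|\diss\|_{W^{1,p}(\Omega)}\to\infty$ within $W$.

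Next I would run the direct method. By Lemma \ref{LemmaF}, $F$ is convex and continuous, hence weakly lower semicontinuous on the reflexive space $W^{1,p}(\Omega)^d$ (recall $p \ge 2$), and $W$ is a closed affine subspace and therefore weakly closed. Taking a minimizing sequence, coercivity gives boundedness, reflexivity a weakly convergent subsequence with limit in $W$, and weak lower semicontinuity that this limit attains the infimum, producing a minimizer.

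For uniqueness I would sharpen the convexity of Lemma \ref{LemmaF} to strict convexity on $W$. The map $B \mapsto \Lambda(\|B\|^2)$ is strictly convex, since $B \mapsto \|B\|^2$ is strictly convex and $\Lambda$ is convex and strictly increasing, and strict convexity of the inner map is preserved under composition with a strictly increasing convex outer map. If two distinct minimizers $\diss_1, \diss_2$ existed, then $\nabla\diss_1 \neq \nabla\diss_2$ on a set of positive measure; otherwise $\diss_1 - \diss_2$ would be constant with zero trace, hence zero. Integrating the pointwise strict inequality then gives $F(t\diss_1 + (1-t)\diss_2) < tF(\diss_1) + (1-t)F(\diss_2)$, contradicting minimality (in the affine $p=2$ case $F$ is a shifted Dirichlet energy and the conclusion is immediate). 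Finally, since $F$ is convex and differentiable and $W = \bc + W_0^{1,p}(\Omega)$ is affine with tangent space $W_0^{1,p}(\Omega)$, the minimizer $\diss$ is characterized by $F'(\diss)(\eta) = 0$ for all $\eta \in W_0^{1,p}(\Omega)$: one direction follows from $F(\diss+\eta) \ge F(\diss) + F'(\diss)(\eta)$, the other from differentiating $t \mapsto F(\diss + t\eta)$ at $t=0$ and using that $W_0^{1,p}(\Omega)$ is linear. Inserting the derivative formula from Lemma \ref{LemmaF} yields exactly \eqref{firstordercond}.

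The hard part will be the coercivity step, specifically extracting the clean lower bound $\Lambda(t) \ge f\,t^{p/2} - C_0$ from the absolute-value estimate \eqref{lowerbound} and combining it correctly with the Poincar\'e inequality on the affine set $W$; once coercivity is in place, the existence, uniqueness, and optimality arguments are routine applications of the direct method and standard convex-analysis characterizations.
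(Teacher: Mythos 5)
Your proposal is correct and follows essentially the same route as the paper's proof: the direct method (minimizing sequence, boundedness via \eqref{lowerbound} and Poincar\'e on the affine set $W$, weak compactness, and weak lower semicontinuity from convexity plus continuity of $F$), uniqueness via strict convexity of $\nabla\diss \mapsto \Lambda(\|\nabla\diss\|^2)$ forcing equal gradients and hence equal functions by the zero-trace argument, and the first-order optimality conditions yielding \eqref{firstordercond}. The only differences are matters of detail: you spell out the coercivity bound $\Lambda(t) \ge f\,t^{p/2} - C_0$ extracted from the absolute value in \eqref{lowerbound} and both directions of the variational characterization, points the paper passes over implicitly.
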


\begin{proof}
    The proof follows the line of argumentation of \cite[Thm. 2.16]{lindqvist2017}.
    Assume $\diss_1, \diss_2 \in W$ are solutions of \eqref{opt_prob_lambda}.
    Since $\Lambda$ is convex and strictly increasing and $\nabla \diss \mapsto \| \nabla \diss \|^2$ is strictly convex, we know that $\nabla \diss \mapsto  \Lambda(\| \nabla \diss \|^2)$ is strictly convex. Hence, we know that $\nabla \diss_1 = \nabla \diss_2$. Therefore, $\diss_1 = \diss_2 +C$ for a constant $C> 0$. Due to $(\diss_1 - \diss_2)\vert_{\partial \Omega}  = 0$, $C = 0$. This shows uniqueness of minimizers for \eqref{opt_prob_lambda}. 
    
    In the following, we show existence of solutions. By the continuity of $F$ shown in lemma \ref{LemmaF} and \eqref{lowerbound}, we have
    \begin{align}
        0 \leq I_0 := \inf_{v \in W} \int_\Omega \Lambda(\| \nabla v \|^2) d \xi 
        \leq \int_\Omega \Lambda (\| \nabla g \|^2) d \xi < \infty.
    \end{align}
    Therefore, we can choose a sequence $(v_j)_{j \in \mathbb N} \subset W$ such that 
    $\int_\Omega \Lambda(\| \nabla v_j \|^2) d \xi < I_0 + \frac1j$ for all $j \in \mathbb N$. Due to \eqref{lowerbound}, we know that $\| \nabla v_j \|_{L^p(\Omega)}$ is bounded. Since, by the Poincar\'e inequality, there exists a constant $C>0$ such that $\| w \|_{L^p(\Omega)} \leq C \| \nabla w \|_{L^p(\Omega)}$ for all $w \in W_0^{1,p}(\Omega)$, we know that
    $\| v_j - g \|_{L^p(\Omega)} \leq C \| \nabla (v_j - g) \|_{L^p(\Omega)}$ for all $j \in \mathbb N$. This implies boundedness of the sequence $( \| v_j \|_{W^{1,p}(\Omega)} )_{j \in \mathbb N}$. Hence, there exists a weakly convergent subsequence $(v_j )_{j \in J \subset \mathbb N}$ and $v \in W$ such that $v_j \rightharpoonup v$ weakly in $W^{1,p}(\Omega)$ for $J \ni j \to \infty$. 
    Since, by lemma \ref{LemmaF}, $F: W \to \mathbb R$ is convex and continuous, it is weakly lower semicontinuous. Therefore,
    $F(v) \leq \liminf_{J \ni j \to \infty} F(v_j) = I_0$ and $v$ is the unique minimizer of \eqref{opt_prob_lambda}. The first order necessary optimality conditions for the unconstrained optimization problem then yield \eqref{firstordercond}.
\end{proof}

\subsection{Choice of the neural network}
\label{SubsectionNN}

We do a slight abuse of notation and write $\alpha(\theta, s)$ instead of $\alpha(s)$, as well as $\Lambda(\theta, s)$ instead of $\Lambda(s)$, in order to stress that our choices of $\alpha$ and $\Lambda$ depend on the weights and biases $\theta$ of a neural net.
By the considerations of the previous section, we choose 
$$\bar \alpha(\theta, \xi, u, \nabla u) = \alpha(\theta, \| \nabla u \|^2) = 2 \frac{d}{ds} \Lambda(\theta, \| \nabla u \|^2),$$ where $\Lambda: \Theta \times \mathbb R \to \mathbb R$ fulfills the requirements of \eqref{firstordercond}. More precisely, we work with 
\begin{align}
    \alpha(\theta, s) := 1 + (s - \eta_1)_{+, \epsilon} \frac{d}{d\,s} \widetilde \Lambda (\theta, s) + (s - \eta_2)_{+, \epsilon},
    \label{choiceNN}
\end{align}
where $\eta_1 > 0$, $\eta_2 \gg \eta_1$, $\epsilon > 0$, $(\cdot)_{+, \epsilon}$ denotes a monotonically increasing, smooth approximation of the $\max(\cdot, 0)$ function and $\widetilde{\Lambda}(\theta, s)$ denotes a continuously differentiable, monotonically increasing, non-negative Input Convex Neural Network \cite{AmosXuKolter, IOCNN}, i.e. $s \mapsto \tilde \Lambda (\theta, s)$ is convex, that fulfills $\tilde \Lambda (\theta, s) = \mathcal O(s)$ for $s \to \infty$. The last summand is not realized in the numerics and only needed if the second summand is zero.

The reason why we choose this specific form of $\alpha$ is due to the fact that for small displacements the harmonic extension operator is a computationally cheap and appropriate choice and we want to keep the linearity of the extension operator close to the identity, which is also beneficial for Strategy 3 in section \ref{ssHybrid}.

The $\Lambda$ associated to \eqref{choiceNN} fulfills the properties of lemma \ref{Lemma23}:
Since $s \mapsto \tilde \Lambda(\theta, s)$ is convex and monotonically increasing and the mappings $s \mapsto (s - \eta_1)_{+, \epsilon}$ and $s \mapsto (s- \eta_2)_{+, \epsilon}$ are non-negative and  monotonically increasing, $\frac{d}{ds} \alpha(\theta, s)$ is non-negative and hence the corresponding $\Lambda$ is convex.
Moreover, $\alpha(\theta, s) \geq 1$ and $\Lambda$ is strictly increasing. 
Continuous differentiability of $\Lambda$ follows from $\alpha$ being a composition of continuously differentiable functions. The lower and upper bound estimates \eqref{upperbound}, \eqref{diffAlpha} and \eqref{lowerbound} hold for $p = 4$.

In order to fulfill the requirements we ensure that the weights of $\widetilde{\Lambda}$ are non-negative (we realize this by using squared weights). 
Let $\sigma$ denote an activation function. 
The neural network $\widetilde{\Lambda}(\theta, s) = x_{k+1}$ is given by the recursion
\begin{align}
\begin{split}
    &x_{k+1} = W_k x_k + b_k, \\
    &x_{\ell+1} = \sigma( W_{\ell} x_\ell + b_\ell), \quad \text{for } \ell \in \lbrace 0, \ldots, k-1 \rbrace\\
    &x_0 = s, 
\end{split}
\label{MLP}
\end{align}
with weights $((W_0, b_0), (W_1, b_1), \ldots, (W_k, b_k))$. This type of neural networks is known as a multilayer perceptron (MLP). 

For the hybrid PDE-NN approach, in order to ensure the properties of Lemma \ref{Lemma23} we choose
$$((W_0, b_0), (W_1, b_1), \ldots, (W_k, b_k)) = \psi(\theta),$$ with $\theta = ((W_0^\theta, b_0^\theta), (W_1^\theta, b_1^\theta), \ldots, (W_k^\theta, b_k^\theta))$ and
\begin{align*}
    \psi: ((W_0^\theta, b_0^\theta), (W_1^\theta, b_1^\theta), \ldots, (W_k^\theta, b_k^\theta)) \mapsto ((s[W_0^\theta], b_0^\theta), (s[W_1^\theta], b_1^\theta), \ldots, (s[W_k^\theta], b_k^\theta)),
\end{align*}
where $s: t \mapsto t^2$, and we use the notation in \cite{hiai2009monotonicity} to denote by $s[\cdot]$ the entrywise application of $s$ to a matrix $\cdot$. We work with the differentiable activation function
$\sigma(x) = \mathrm{ln}(1+e^x)$, 
with its derivative, the sigmoid function,
$\sigma^\prime(x) = \frac{e^x}{1 + e^x} = \frac{1}{1 + e^{-x}}.$
Moreover, since it is not present in the derivative of the network, we choose the bias in the output layer as $b_k^\theta = 0$, where $k$ is the depth of the neural network. Its derivative is therefore given by
$\frac{d}{d\,s} \widetilde{\Lambda}(\theta, s) = y_{k+1} $, where
\begin{align*}
    & y_{k+1} = W_k y_k, \\
    &(y_{\ell+1})_i = \sum_j \sigma^\prime( W_{\ell} x_\ell + b_\ell)_i (W_{\ell})_{i,j} (y_\ell)_j, \quad \text{for } \ell \in \lbrace 0, \ldots, k-1 \rbrace, \\
    &x_{\ell} = \sigma( W_{\ell-1} x_{\ell-1} + b_{\ell-1}),\quad \text{for } \ell \in \lbrace 0, \ldots, k-1 \rbrace, \\
    &x_0 = s, ~ y_0 = 1.
\end{align*}
Hence, we work with a neural network architecture that is motivated by theoretical considerations. 

\subsection{Approximation Properties of Input Convex Neural Network Architectures}
\label{sec::approxCINNs}

We discuss approximation capabilities of a class of shallow, input-convex neural networks. We show that for one-dimensional input the ansatz class guarantees both convexity and universal approximation which renders it a promising candidate for later numerical studies. The convexity is crucial to ensure the solvability of the PDE at every iteration, whereas the universality provides the guarantee that any convex function can be approximated.
\begin{lemma}\label{LemmaRepresentation_pcf}
    Let $f\in C^0(\mathbb R)$ be convex, coercive, piecewise affine linear with $n\in\mathbb N$ linear regions. Then $f$ can be written in the form 
    \begin{equation}\label{eq:one_d_icnn}
        f(x) = c + \sum_{i=1}^n a_i \operatorname{ReLU}(w_ix + b_i), 
    \end{equation}
    for $a_i\geq0$ and $w_i,b_i,c\in\mathbb R$.
\end{lemma}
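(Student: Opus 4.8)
My plan is to exploit the fact that a continuous convex piecewise affine function is completely determined, up to an additive constant, by the locations of its kinks and by the (necessarily positive) jumps of its slope there, and to realize each such slope jump by a single ReLU ramp oriented \emph{away} from the minimizer. First I would record the structure of $f$: since $f$ has $n$ linear regions there are breakpoints $t_1 < \cdots < t_{n-1}$ and slopes $m_0 < m_1 < \cdots < m_{n-1}$ on the successive intervals, where the strict inequalities (hence genuine slope increments $\Delta_j := m_j - m_{j-1} > 0$) follow from convexity together with the assumption that the $n$ regions are distinct. Coercivity then forces $m_0 < 0 < m_{n-1}$, so $f$ is bounded below and attains its minimum at a finite point $x^\ast$; generically $x^\ast$ is the breakpoint $t_k$ with $m_{k-1} < 0 < m_k$, and if some region has slope $0$ I would take $x^\ast$ to be an endpoint of that flat region.

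Next I would assemble the representation by splitting the breakpoints at $x^\ast$. For every breakpoint $t_j$ to the left of $x^\ast$ I use a left-pointing ramp $\Delta_j \operatorname{ReLU}(-x + t_j)$, which has slope $-\Delta_j$ for $x < t_j$ and $0$ for $x > t_j$; for every breakpoint to the right of $x^\ast$ I use a right-pointing ramp $\Delta_j \operatorname{ReLU}(x - t_j)$. At $x^\ast$ itself I add the two central terms $(-m_{k-1}) \operatorname{ReLU}(x^\ast - x)$ and $m_k \operatorname{ReLU}(x - x^\ast)$, which reproduce the boundary slopes $m_{k-1} < 0$ and $m_k > 0$ on the two sides of the minimizer. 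Taking $c = f(x^\ast)$ and observing that every ramp vanishes at $x = x^\ast$, the resulting function and $f$ share the value $c$ at $x^\ast$ and have identical piecewise-constant derivatives almost everywhere, so they coincide. All coefficients equal either a slope increment $\Delta_j$ or one of $-m_{k-1}$, $m_k$, hence are nonnegative, while the weights are $w_i = \pm 1$ and the biases encode the breakpoint locations, so the representation has exactly the form \eqref{eq:one_d_icnn}.

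Finally I would check the count: of the $n-1$ breakpoints, $k-1$ lie left of $x^\ast$ and $n-1-k$ lie right of it, and together with the two central terms this gives exactly $(k-1) + (n-1-k) + 2 = n$ ReLU terms; in the degenerate case of a flat minimizing region one central coefficient vanishes and one obtains fewer than $n$ terms, which can be padded by zero-coefficient summands. I expect the main obstacle to be precisely the interplay between the sign constraint $a_i \geq 0$ and coercivity. A naive single integration of $f'$ would yield $f(x) = c + m_0 x + \sum_{j=1}^{n-1} \Delta_j \operatorname{ReLU}(x - t_j)$, and the leftover linear term $m_0 x$ with $m_0 < 0$ cannot be written as a nonnegative combination of ReLUs, since every such combination is bounded below whereas $m_0 x$ is not. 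Splitting the ramps at the minimizer is exactly what removes this obstruction, and arranging the bookkeeping so that it lands on $n$ rather than $n+1$ terms, by letting the kink at $x^\ast$ be carried jointly by the two central ramps, is the one place where care is needed.
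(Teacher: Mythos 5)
Your proof is correct, and it takes a genuinely different route from the paper's, which dispatches the lemma in a single line (``follows via induction over the number of linear regions'') without exhibiting the construction. Your argument is direct: locate the minimizer $x^\ast$ (guaranteed by coercivity), realize each slope increment $\Delta_j = m_j - m_{j-1} > 0$ at a breakpoint to the left (resp.\ right) of $x^\ast$ by a ramp $\Delta_j\operatorname{ReLU}(t_j - x)$ (resp.\ $\Delta_j\operatorname{ReLU}(x - t_j)$), and split the kink at $x^\ast$ between the two central ramps $(-m_{k-1})\operatorname{ReLU}(x^\ast - x)$ and $m_k\operatorname{ReLU}(x - x^\ast)$, whose coefficients are nonnegative precisely because the slope changes sign at the minimizer. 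The verification goes through as you describe: both functions take the value $c = f(x^\ast)$ at $x^\ast$, where every ramp vanishes, and their derivatives agree off the breakpoints, so they coincide; the count is $(k-1) + (n-1-k) + 2 = n$; and the flat-minimum case merely sets one central coefficient to zero, which $a_i \geq 0$ tolerates. Compared with the paper's sketch, your construction buys a closed-form representation and isolates exactly why coercivity and the sign constraint interact: your observation that the naive antiderivative form $c + m_0 x + \sum_j \Delta_j\operatorname{ReLU}(x - t_j)$ is irreparable, because $m_0 x$ is unbounded below while every nonnegative ReLU combination is bounded below by its constant, is the crux of the lemma. It also sidesteps a subtlety that the paper's induction would have to confront: peeling a boundary ramp off $f$ can destroy coercivity (the remainder acquires a flat tail) or even boundedness below if one peels from the wrong end, so the clean inductive invariant is ``convex, piecewise affine, bounded below'' rather than ``coercive,'' with the peeling side chosen according to the sign of the adjacent slope. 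One pedantic remark: for $n=1$ no coercive affine function exists, so the statement is vacuous there and your standing assumption $m_0 < 0 < m_{n-1}$ (hence $n \geq 2$) costs nothing.
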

\begin{proof}
    Follows via induction over the number of linear regions. 
\end{proof}

\begin{lemma}
    Let $f \in C^0(\mathbb R)$ be convex. Then $f$ can be approximated by ReLU networks as specified in Lemma \ref{LemmaRepresentation_pcf} uniformly on compacta. This means, for any compact subset $K\subset \mathbb R$ and $\varepsilon > 0$ there is a network $n = n(f, K, \varepsilon):\mathbb R \to \mathbb R$ of the form \eqref{eq:one_d_icnn} such that 
    \begin{equation*}
        \sup_{x\in K} |f(x) - n(x)| < \varepsilon/2.
    \end{equation*}
\end{lemma}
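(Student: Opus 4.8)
The plan is to reduce the statement to the piecewise affine representation already available in Lemma~\ref{LemmaRepresentation_pcf}. Since $K$ is compact it is contained in some interval $[-M,M]$, and it suffices to produce a single convex, coercive, piecewise affine function $h\colon\mathbb R\to\mathbb R$ with finitely many linear regions satisfying $\sup_{x\in K}|f(x)-h(x)|<\varepsilon/2$. Feeding $h$ into Lemma~\ref{LemmaRepresentation_pcf} then yields a network $n$ of the form \eqref{eq:one_d_icnn} with $n=h$, and since $h=n$ on $K$ this gives the claimed bound.

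First I would build a convex piecewise affine approximant by interpolation. As $f$ is continuous on the compact interval $[-M,M]$, it is uniformly continuous there, so for the given $\varepsilon$ there is $\delta>0$ with $|f(x)-f(y)|<\varepsilon/2$ whenever $|x-y|<\delta$. Choosing a partition $-M=x_0<x_1<\dots<x_m=M$ of mesh at most $\delta$ and letting $If$ be the continuous function interpolating $f$ at the nodes and affine on each $[x_j,x_{j+1}]$, every value $If(x)$ is a convex combination of $f(x_j)$ and $f(x_{j+1})$, both within $\varepsilon/2$ of $f(x)$; hence $\sup_{x\in[-M,M]}|f(x)-If(x)|<\varepsilon/2$. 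The key use of convexity is that the chord slopes $s_j=\bigl(f(x_{j+1})-f(x_j)\bigr)/(x_{j+1}-x_j)$ are nondecreasing in $j$, by the three-point slope inequality for convex functions, so $If$ is itself convex and piecewise affine with finitely many linear regions.

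Finally I would make the approximant globally coercive without disturbing it on $K$. The interpolant $If$ is defined only on $[-M,M]$, and even extended it need not tend to $+\infty$ at both ends, so I would attach to its graph a ray through $(-M,f(-M))$ of slope $m_L<\min(s_0,0)$ on $(-\infty,-M)$ and a ray through $(M,f(M))$ of slope $m_R>\max(s_{m-1},0)$ on $(M,\infty)$. Because $m_L$ is no larger than the smallest interior slope and $m_R$ no smaller than the largest, the resulting function $h$ still has nondecreasing slopes and is therefore convex and piecewise affine with finitely many linear regions; the strict sign conditions $m_L<0<m_R$ make it coercive, while $h=If$ on $[-M,M]\supseteq K$. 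Applying Lemma~\ref{LemmaRepresentation_pcf} to $h$ then finishes the argument. I expect the coercivity step to be the main obstacle: Lemma~\ref{LemmaRepresentation_pcf} genuinely needs coercivity, since a representation $c+\sum_i a_i\operatorname{ReLU}(w_ix+b_i)$ with $a_i\ge0$ is bounded below and so a non-coercive or purely affine target could not be matched globally, whereas $f$ is only assumed convex. The care therefore lies in manufacturing coercivity in the extension while leaving both the values on $K$ and the monotonicity of the slopes untouched.
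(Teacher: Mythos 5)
Your proposal is correct and follows essentially the same route as the paper, whose proof is a one-line appeal to Lemma~\ref{LemmaRepresentation_pcf} together with uniform continuity of $f$ on compacta; you have simply filled in the details of that argument (convex piecewise affine interpolation via nondecreasing chord slopes, plus the coercive extension needed to invoke Lemma~\ref{LemmaRepresentation_pcf}), which the paper leaves implicit.
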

\begin{proof}
    Follows from Lemma \ref{LemmaRepresentation_pcf} and uniform continuity of $f$ on any compact set $K \subset \mathbb R$.
\end{proof}

 \begin{remark}
    For $d \geq 2$, shallow input convex neural networks of the form 
    \begin{align}
        f_\theta(x) = \sum_{i=1}^N c_i \rho(\tilde w_i \cdot x + \tilde b_i),
        \label{eq::architectureshallow}
    \end{align}
    where $c_i\geq 0$ and $\rho = ReLU$, yield no universal approximation to convex functions, see section \ref{appendix}. Similar results for classical discretizations can e.g. be found in \cite{chone2001non, wachsmuth2017conforming}. There are other ansatz classes for input convex neural networks that have better approximation properties, see e.g. \cite{balazs2015near}.
 \end{remark}
    
\section{Approach 2: NN-corrected Harmonic Extension Approach}\label{sec:nn-corrected}

In contrast to the approach of learning $\bar{\alpha}$ in a non-linear PDE \eqref{PDEalpha} defining an extension operator, we explore a more direct approach by learning a neural network that realizes a pointwise, additive correction to the harmonic extension operator. This neural network takes pointwise input features from the harmonic extension and produces a correction to deal with boundary deformations where the harmonic extension degenerates. Typical boundary extension operators are based on PDEs, so we choose to use the component values and first order derivatives as input features, along with spatial coordinate $\xi$. We define the neural network-corrected harmonic extension as 
\begin{equation}\label{eq:corrected_harm_extension}
    u(\xi) = u_\mathrm{harm}(\xi) + l(\xi) \cdot \mathcal{N}_\theta \left(\xi, u_\mathrm{harm}(\xi), \nabla u_\mathrm{harm}(\xi) \right),
\end{equation}
where $u_\mathrm{harm}$ solves \eqref{eq:harmonic_extension} with boundary condition $u = g$ and $\mathcal{N}_\theta$ is a learned function with parameters $\theta$. The function $l$ is zero on the boundary $\partial\Omega$ and positive in the interior $\Omega$, such that \eqref{eq:corrected_harm_extension} satisfies the boundary condition $u\vert_{\partial\Omega} = g$ exactly and the neural network affects the extension in every point in the interior\cite{McFall2009bc_ann}. We define the learned extension in this way because even for small deviations from the boundary conditions, the discretized geometry will be incorrect for the problem and introduce numerical artifacts for FSI simulations.

We construct our neural network as a standard multi-layer perceptron, defined by the recursion
\eqref{MLP} 
and  $\mathcal{N}_\theta(x) = x_{k+1}$ 
for a network with $k$ hidden layers. We write $\theta$ for the collection of weights $W_\ell$ and biases $b_\ell$, the parameters of the network to be trained, and refer to the dimensionality of $x_\ell$ as the width of the $\ell$-th layer. We choose the Rectified Linear Unit, $\mathrm{ReLU}(x) = \max(0, x)$, as activation function.

We choose to base our extension on the harmonic extension, since it is a linear equation with fast, order optimal solvers available and is suitable for modest boundary deformations. It is therefore a good starting point for the correction, without being prohibitively expensive to compute. 
Moreover, for a fixed architecture, evaluating the learned neural network correction $\mathcal{N}_\theta$ scales linearly with the number of mesh points in terms of runtime and thus 
the NN-corrected harmonic extension approach should scale well for larger meshes.

We determine the function $l$ in \eqref{eq:corrected_harm_extension} as the solution of the Poisson problem
\begin{equation}\label{eq:mask_poisson}
    -\Delta l = f \text{ in } \Omega, \quad
    l = 0 \text{ on } \partial\Omega,
\end{equation}
which for a wide class of $f$ give solutions $l$ that are strictly positive in the interior $\Omega$ and smooth, see lemma \ref{result:mask_regularity}. The option of $f \equiv 1$ is one choice of $f$ such that this lemma holds. 

\begin{lemma}\label{result:mask_regularity}
    If $f \in C^\infty(\Omega)$, is bounded, non-negative and not identically zero, $\Omega$ is bounded, and $\partial\Omega$ is \emph{regular} at every point, then there exists a unique classical solution $l$ of \eqref{eq:mask_poisson}, satisfying $l \in C(\bar{\Omega}) \cap C^\infty(\Omega)$ and $l(\xi) > 0$ for all $\xi \in \Omega$. 
\end{lemma}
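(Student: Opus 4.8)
The plan is to construct the solution by splitting off a particular solution of the inhomogeneous equation and then correcting the boundary values with a harmonic function. I would write $l = w + h$, where $w$ is a particular solution of $-\Delta w = f$ on $\Omega$ and $h$ is harmonic on $\Omega$ with $h = -w$ on $\partial\Omega$, so that $l$ solves $-\Delta l = f$ in $\Omega$ and $l = 0$ on $\partial\Omega$. Establishing existence of each piece, together with the boundary-regularity hypothesis, then yields $l \in C(\bar\Omega) \cap C^\infty(\Omega)$; uniqueness follows from the maximum principle; and positivity follows from superharmonicity.

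For the particular solution I take the Newtonian potential $w(\xi) = \int_\Omega \Phi(\xi - y) f(y)\, dy$, where $\Phi$ is the fundamental solution of $-\Delta$. Since $f$ is bounded and $\Omega$ is bounded, $\Phi(\cdot - y)$ is locally integrable, so $w$ is well defined and continuous on all of $\mathbb R^d$, and in particular $w \in C(\bar\Omega)$. Because $f \in C^\infty(\Omega) \subset C^{0,\alpha}_{\mathrm{loc}}(\Omega)$, the classical theory of the Newtonian potential gives $w \in C^2(\Omega)$ with $-\Delta w = f$ pointwise in $\Omega$; bootstrapping interior elliptic (Schauder) regularity with $f \in C^\infty$ then upgrades this to $w \in C^\infty(\Omega)$.

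For the harmonic correction, the boundary datum $\varphi := -w|_{\partial\Omega}$ is continuous. I would solve the Dirichlet problem $\Delta h = 0$ in $\Omega$, $h = \varphi$ on $\partial\Omega$ by Perron's method, which produces a bounded harmonic function $h \in C^\infty(\Omega)$. Here the hypothesis that every boundary point is regular is essential: it guarantees a barrier at each $\xi_0 \in \partial\Omega$, and hence that the Perron solution attains its boundary values continuously, i.e. $h \in C(\bar\Omega)$ with $h|_{\partial\Omega} = \varphi$. Setting $l := w + h$ yields a function in $C(\bar\Omega) \cap C^\infty(\Omega)$ solving $-\Delta l = f$ in $\Omega$ with $l|_{\partial\Omega} = w + h = 0$. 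Uniqueness is immediate: the difference of two classical solutions is harmonic, continuous up to the boundary, and vanishes on $\partial\Omega$, hence is identically zero by the maximum principle.

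Finally, for strict positivity I use that $\Delta l = -f \le 0$, so $l$ is superharmonic on the (connected) domain $\Omega$. The weak minimum principle together with $l|_{\partial\Omega} = 0$ gives $l \ge 0$ in $\Omega$. If $l(\xi_0) = 0$ at some interior point $\xi_0$, then $l$ attains its minimum value $0$ in the interior, so the strong minimum principle forces $l$ to be constant, $l \equiv 0$, and hence $f = -\Delta l \equiv 0$, contradicting $f \not\equiv 0$. Therefore $l(\xi) > 0$ for all $\xi \in \Omega$. I expect the only genuinely nontrivial step to be the continuity up to the boundary of the harmonic correction $h$: this is exactly where regularity of every boundary point enters, through barriers and Perron's method, whereas the interior smoothness and the positivity are comparatively routine consequences of elliptic regularity and the maximum principle.
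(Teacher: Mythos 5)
Your proof is correct, but it takes a genuinely more constructive route than the paper. The paper disposes of existence and uniqueness in a single stroke by citing \cite[Thm. 4.3]{Gilbarg-Trudinger}, which states precisely that the Dirichlet problem with bounded, locally H\"older continuous right-hand side is classically solvable for arbitrary continuous boundary data if and only if every boundary point is regular; your decomposition $l = w + h$ into a Newtonian potential and a Perron solution is, in effect, the proof of that cited theorem, so you have unpacked the black box the paper leans on. The remaining ingredients coincide: both arguments obtain strict interior positivity from the maximum principle (your version spells out the weak minimum principle step $l \ge 0$ and the contradiction $l \equiv 0 \Rightarrow f \equiv 0$, which the paper's one-line appeal to \cite[Thm. 3.5]{Gilbarg-Trudinger} leaves implicit), and both upgrade to $C^\infty(\Omega)$ by interior elliptic regularity --- the paper by viewing $l$ as a weak solution and citing \cite[Sec. 6.3, Thm. 3]{evans-pde}, you by Schauder bootstrapping of the Newtonian potential together with smoothness of the harmonic correction. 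What your version buys is self-containedness and a clear view of exactly where the boundary-regularity hypothesis enters (it furnishes barriers, hence continuity up to the boundary of the Perron solution); what the paper's version buys is brevity. One pedantic point that applies to both arguments: the strong minimum principle step requires $\Omega$ to be connected, which is implicit in the paper's use of the word ``domain'' and which you rightly flag in passing.
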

\begin{proof}

    By \cite[Thm. 4.3]{Gilbarg-Trudinger}, the Dirichlet problem \eqref{eq:mask_poisson} has a unique classical solution if every boundary point of $\Omega$ is regular and $f$ is bounded and locally Hölder continuous over $\Omega$. Thus, there exists a classical solution $l$ of \eqref{eq:mask_poisson}, which is by definition $C^2(\Omega) \cap C(\bar{\Omega})$. 
    By the strong maximum principle, \cite[Thm. 3.5]{Gilbarg-Trudinger}, since $\Delta l \leq 0$ in $\Omega$ and $l = 0$ on $\partial \Omega$, $l$ is positive in the interior. Since $l$ is a classical solution of \eqref{eq:mask_poisson}, it is also a weak solution in $H^1(\Omega)$ and is $C^\infty(\Omega)$ when $f$ is $C^\infty(\Omega)$, \cite[Sec. 6.3, Thm. 3]{evans-pde}. 
\end{proof}

\begin{remark}{
    The domain in the FSI benchmark example is two-dimensional and bounded by a finite number of simple closed curves, hence all boundary points are regular \cite[p.26]{Gilbarg-Trudinger}.}
\end{remark}

The fact that $l$ is strictly positive in the interior ensures that it is zero nowhere, so the neural network $\mathcal{N}_\theta$ can correct the extension everywhere. 
The values of $l$ can also be normalized such that $0 < l(\xi) < 1$ everywhere in $\Omega$, since by scaling $f$ the solution of the linear PDE \eqref{eq:mask_poisson} will scale accordingly. 

The function $f$ was chosen such that the resulting $l$ solving \eqref{eq:mask_poisson} weights the areas where mesh degeneration is expected to happen, and is proportional to 
\begin{equation}\label{eq:mask_rhs_hand_tuned}
    \tilde{f}(x, y) = 2  (x+1)(1-x) \exp( -3.5 x^7 ) + 0.1,
\end{equation}
which we then normalized. Figure \ref{Figure_mask_functions} shows the difference in $l$ resulting from choosing $f$ by \eqref{eq:mask_rhs_hand_tuned} compared to using the simple $f = 1$. The expression for $f$ by \eqref{eq:mask_rhs_hand_tuned} was chosen by first assuming $f$ only depends on $x$, due to the near symmetry of the domain. If \eqref{eq:mask_poisson} is actually one-dimensional, the equation reduces to choosing the curvature of $l$, by $-l'' = f$ in $\Omega$. Then, $f$ close to zero in the right half of the domain makes $l$ close to zero as well, due to the boundary condition $l=0$ on the right boundary and positivity due to the maximum principle. With this motivation, we experimented with terms that kept $f$ positive over the whole domain and small in the right half, until we found an $l$ we were satisfied with. The significance of the choice of $f$ is analyzed in section \ref{sec:parameter_study}.

\begin{figure}
    \centering
    \includegraphics[width=0.49\textwidth, trim={0 1cm 0 0},clip]{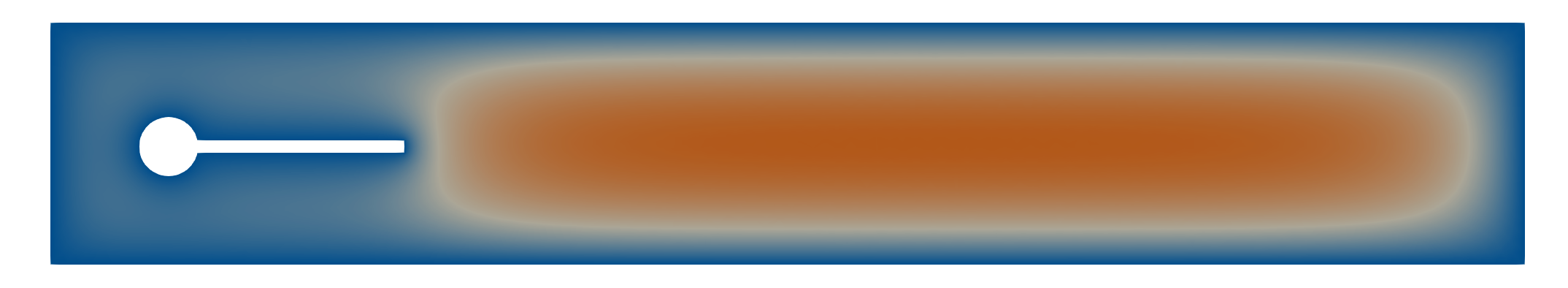}
    \includegraphics[width=0.49\textwidth, trim={0 1cm 0 0},clip]
    {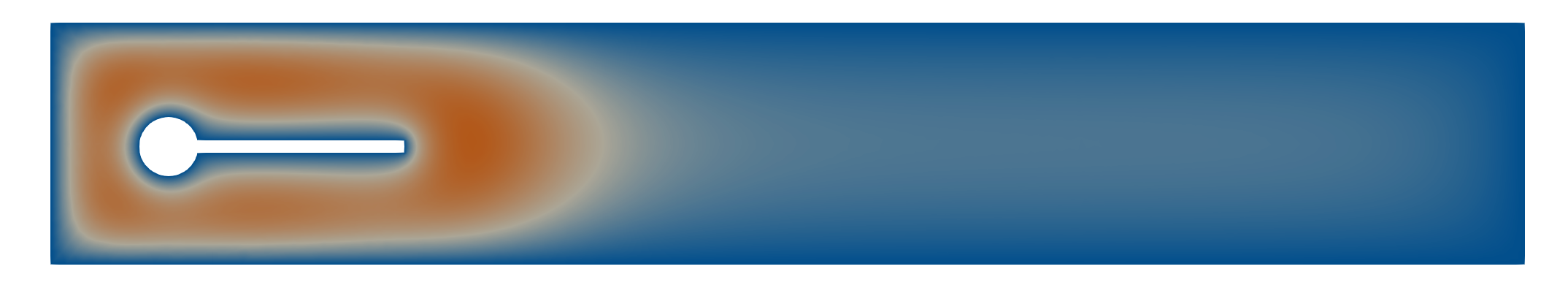}
    \caption{Comparison of solutions to \eqref{eq:mask_poisson} with $f=1$ (left) and the hand-tuned $f$ \eqref{eq:mask_rhs_hand_tuned} (right).
    }
    \label{Figure_mask_functions}
\end{figure}

Since closed form solutions of \eqref{eq:mask_poisson} for arbitrary domains $\Omega$ and source terms $f$ are in general not available, the solution is approximated using an appropriate discretization. 
Maximum principles do not necessarily translate into discrete ones, so since positivity of $l$ in the interior is necessary, we selected continuous linear Lagrange ($\mathbb{P}_1$) finite elements with a Delaunay triangulated mesh \cite[Thm. 3.1]{Huang2011discrete_max}, which together satisfy a discrete maximum principle.
We note that we only need to solve for $l$ once for any given mesh and reuse the solution afterwards.

The neural network correction $\mathcal{N}_\theta$ defines a $d$-dimensional vector field over the domain that we can evaluate where needed to solve the total computational problem. However, for the use in the FSI test problem it is convenient to embed the learned 
mesh displacement into a finite element function space. To this end, similar to the hybrid PDE-NN approach, we use the space of continuous piecewise quadratic functions ($\mathbb{P}_2$). For simplicity, the 
$\mathbb{P}_2$ embedding of $\mathcal{N}_\theta$ is obtained by linear interpolation (into midpoints) of its $\mathbb{P}_1$ representation. In particular, this construction only requires evaluation of $\mathcal{N}_\theta$ at mesh vertices.

In this work we restrict ourselves to neural networks operating on pointwise values, allowing the same network to be used for any triangulation of the domain, just by applying the network to each mesh point. We then need to encode sufficient information into the network to allow for correction of the mesh extension of varying boundary deformations. For example, predicting the extension correction using only the spatial coordinates $\xi$ is not practicable, since the network would not be able to distinguish the different boundary conditions $u\vert_{\partial\Omega} = g$. In addition to $\xi$ we also include information from the already computed harmonic extension $u_\mathrm{harm}$. To complement this (vertex-)local information, the final network input is an approximate gradient of $u_\mathrm{harm}$ at the vertex. We remark that since $u_\mathrm{harm}$ is represented by a $C^0$-conforming finite element function, its gradient in a vertex cannot be simply obtained by standard nodal interpolation. Instead, we apply the Cl{\'e}ment interpolation \cite{clement1975approximation}, which computes the approximation of $\nabla u_\mathrm{harm}$ as the $L^2$ projection over a patch composed of the finite element cells connected to a given vertex. We note that the $L^2$ approximation error of the recovered gradient (as $\mathbb{P}_1$ function) decreases linearly with the mesh size \cite{clement1975approximation}. Let us finally note that the local averaging gathers information from the neighboring vertices and is in this sense similar to aggregation in graph-neural networks \cite{ZHOU202057graph_neural_networks_review}. In summary, 
the inputs to our neural network $\mathcal{N}_\theta$ at $\xi$ are $(\xi, u_\mathrm{harm}(\xi), D_c u_\mathrm{harm}(\xi))$, where $D_c$ denotes approximating the Jacobian by Clément interpolation.

\section{FSI example}
\label{sec::FSI}

In order to test the learned extension operators and compare them with respect to extension quality, we apply them to the FSI benchmark II. 
{\footnote[1]{The code is available at \url{https://github.com/JohannesHaubner/LearnExt} and \url{https://github.com/ottarph/learnext-correction}.}} 
We consider the coupling of the Navier-Stokes equations with
St. Venant-Kirchhoff (STVK) or Incompressible Mooney-Rivlin (IMR) type material and choose a monolithic ALE setting formulated on a 
fixed reference domain. 

For the sake of convenience and brevity, we present the PDE system with a harmonic extension equation for the fluid displacement - other extension equations are more appropriate for large displacements and can be straightforwardly included into the system of equations \cite{Wick, HaubnerDiss} as long as they can be represented as PDEs. Let $\rho_f$, $\rho_s$ denote the fluid density and structure density, and $\nu_f$ denote the fluid viscosity. Moreover let $\mu_s$, $\lambda_s$ and $\mu_2$ denote Lam\'e parameters. The FSI model reads as follows.
\begin{equation}\label{eq:monolithic_fsi}
\left.\begin{aligned}
    J \rho_f \partial_t v_f + J \rho_f((F^{-1}(v_f - \partial_t w_f))\cdot \nabla) v_f - \mathrm{div} (J \sigma_f F^{-\top}) = J \rho_f f &\quad \text{on } \Omega_f \times (0,T), \\
    \mathrm{div}(JF^{-1} v_f) = 0& \quad \text{on } \Omega_f \times (0,T), \\
    v_f = v_{fD}& \quad \text{on }\Gamma_{f, D} \times (0,T), \\
    v_f(0) = 0 & \quad \text{on } \Omega_f, \\
    \rho_s \partial_t v_s - \mathrm{div}(J \sigma_s F^{-\top}) = \rho_s f_s & \quad \text{on } \Omega_s \times (0,T), \\
    \rho_s \partial_t w_s - \rho_s v_s = 0& \quad \text{on } \Omega_s \times (0,T), \\
    w_s(0) = 0& \quad \text{on } \Omega_s, \\
    v_s (0) = 0& \quad \text{on } \Omega_s, \\
    - \Delta w_f = 0& \quad \text{on } \Omega_f \times(0,T), \\
    w_f = 0 & \quad \text{on } \Gamma_f \times (0,T), \\
    \partial_t w_s = v_s = v_f& \quad \text{on }\Gamma_i \times (0,T), \\
    - J \sigma_f F^{-\top} n_f = J \sigma_s F^{-\top} n_s& \quad \text{on } \Gamma_i \times (0,T), \\
    w_f = w_s& \quad \text{on }\Gamma_i \times (0,T),
\end{aligned}\right\}
\end{equation}
where 
$T> 0$, $\Omega_f$ denotes the fluid domain with boundary $\Gamma_f \cup \Gamma_i$. $\Gamma_{f,D} \subset \Gamma_f$ denotes a subset of the boundary on which Dirichlet boundary conditions for the fluid flow are prescribed. $\Omega_s$ denotes the structure domain with boundary $\Gamma_f \cup \Gamma_i$, where $\Gamma_i$ denotes the interface between the fluid and structure domain. $v_f$ and $v_s$ denote the fluid \text{and} structure velocity, respectively, $p_f$ the fluid pressure, $w_s$ the structure displacement, and $w_f$ the extension of the structure displacement to the fluid domain. In addition, 
\begin{align*}
    &\sigma_f(F) = \rho_f \nu_f (D v_f F^{-1} + F^{-\top} Dv_f^\top) - p_f \mathrm{I}, \\
    & F = \mathrm{I} + D w, \quad J(F) = \mathrm{det} (F), \quad E(F) = \frac12 (F^\top F - \mathrm I)
\end{align*}
and 
\begin{align}
    \sigma_s(F) = J(F)^{-1} F (\mu_s (F^\top F - \mathrm I) +\lambda_s \mathrm{tr}(E(F)) \mathrm I) F^\top &\quad \text{(STVK)}, \label{eq:stvk_material}\\ 
    \sigma_s(F) = \mu_s F F^\top - \mu_2 F^{-\top} F^\top - p_s \mathrm{I} & \quad \text{(IMR)} \label{eq:imr_material}
\end{align}
for STVK or IMR type material. 
In case of IMR type material we add the equation 
\begin{align*}
    J(F) - 1 = 0 \quad \text{in } \Omega_s \times (0,T).
\end{align*}
In order to be able to work with functions defined on the whole domain, in the case of STVK type material, we add the equation
\begin{align*}
- \alpha_p \Delta p_s = 0 \quad \text{in } \Omega_s \times (0,T)
\end{align*}
for a constant $\alpha_p = 10^{-9}.$
Whenever it is clear from the context we write $J$, $E$, $\sigma_f$, $\sigma_s$ instead of $J(F)$, $E(F)$, $\sigma_f(F)$, $\sigma_s(F)$. In order to simplify notation, we also do not explicitly write the subscripts for the states $v$, $p$ and $w$.

$D \cdot$ denotes the Jacobian of function $\cdot$. We have compatibility conditions: $v_{fD} = \partial_t w_{fD}$, $v_{fD}(0) = 0$. 
Let 
\begin{align*}
    &V \subset \lbrace v \in H^1(\Omega)^d ~:~ v \vert_{\Gamma_{f}} = v_{fD} \rbrace, \\
    & V_0 \subset \lbrace v \in H^1(\Omega)^d ~:~v \vert_{\Gamma_f} = 0 \rbrace, \\
    & W \subset \lbrace v \in H^1(\Omega)^d ~:~ w\vert_{\Gamma_{f}} = w_{fD} \rbrace, \\
    & W_0 \subset \lbrace v \in L^2(\Omega)^d~:~ v\vert_{\Omega_f} \in H_0^1(\Omega_f)^d, ~ v \vert_{\Omega_s} \in  H^1 (\Omega_s)^d \rbrace, \\
    & P \subset \lbrace p \in L^2(\Omega_f)~:~ \int_{\Omega_f} p d\xi = 0 \rbrace.
\end{align*}
For the sake of clarity, we restrict the presentation to the STVK type material, mention explicitly when IMR type material is considered, and assume that $\Gamma_{f,D} = \Gamma_f$ for the presentation of the weak formulation and the splitting scheme (see section \ref{Section2}).
The weak formulation (for STVK type material) is given by: Find $(v,p, w) \in V \times P \times W$ such that
\begin{align}
\begin{split}
    &\mathcal A(v, p, w)(\psi^v, \psi^p, \psi^w) \\
    &= (J \rho_f \partial_t v, \psi^v)_{\Omega_f} + (J \rho_f ((F^{-1}(v - \partial_t w))\cdot \nabla)v, \psi^v)_{\Omega_f} + (J \sigma_f F^{-\top}, D \psi^v)_{\Omega_f}  \\
    &- (J \rho_f f, \psi^v)_{\Omega_f} + (\rho_s \partial_t v, \psi^v)_{\Omega_s} + (J \sigma_s F^{-\top}, D \psi^v)_{\Omega_s} 
    + (\rho_s (\partial_t w - v), \psi^w)_{\Omega_s}  \\ 
    & + \alpha_w (D w, D \psi^w)_{\Omega_f} + (\mathrm{div}(J F^{-1} v), \psi^p)_{\Omega_f} + \alpha_p ( \nabla p, \nabla \psi^p)_{\Omega_s} = 0
\end{split}
\label{weakform}
\end{align}
for all $(\psi^v, \psi^p, \psi^w) \in V_0 \times P \times W_0$. 

\subsection{Splitting the FSI problem into smaller subproblems}
\label{Section2}

\begin{algorithm}[t]
  \SetKwInOut{Input}{Input}
  \SetKwInOut{Output}{Output}
  \SetKwProg{try}{try}{:}{}
  \SetKwProg{catch}{catch}{:}{end}
  \SetKwRepeat{Do}{do}{while}
  \Input{$(v(t), p(t), w(t))$ for a time-point $t\geq0$ (discretized as described in section 4.2); initial time-step size $\Delta t_{init}$; boundary conditions $v_{fD}(t + \delta t)$ for all $\delta t \in (0, \Delta t_{init}]$; $\theta \in [0,1]$ (for time-stepping)}
  $\Delta t \gets \Delta t_{init}$; $\mathrm{success} \gets \texttt{False}$ \\
  \While{$\mathrm{success} = \texttt{\upshape False}$ and $\Delta t \geq \Delta t_{min}$}{
  \try{}{
    compute $(v(t+\Delta t)$, $p(t + \Delta t)$ by solving (29) (discretized as described in section 4.2) \\
    compute $w(t + \Delta t)$, by $w(t) + \Delta t( (1 - \theta) v(t) + \theta v(t + \Delta t)$ on the solid domain and with the mesh motion operator on the fluid domain \\
    update $(v(t+\Delta t)$, $p(t + \Delta t)$ by solving (31) (discretized as described in section 4.2) \\
    $\mathrm{success} \gets \texttt{True}$
  }
  \catch{Exception}{
    reduce $\Delta t$ (see section 4.2, e.g. $\Delta t \gets \frac{\Delta t}2$)
  }
  }
  \uIf{$\mathrm{success} = \texttt{\upshape True}$}{
  \Return $(v(t+\Delta t), p(t+ \Delta t), w(t+ \Delta t))$; $\Delta t$
  }\Else{
  \textbf{raise} \textit{Exception}
     }
  \caption{Time-step to solve FSI system}
  \label{alg:step}
\end{algorithm}

\begin{algorithm}[t]
  \SetKwInOut{Input}{Input}
  \SetKwInOut{Output}{Output}
  \SetKwProg{try}{try}{:}{}
  \SetKwProg{catch}{catch}{:}{end}
  \SetKwRepeat{Do}{do}{while}
  \Input{$(v(0), p(0), w(0))$ (discretized as described in section 4.2); maximal time-step size $\Delta t_{max}$; minimal time-step size $\Delta t_{min}$; simulation time $T > 0$; boundary conditions $v_{fD}(t)$ for all $t \in [0, T]$; $\theta \in [0,1]$ (for time-stepping)}
  $t \gets 0$; $\Delta t \gets \Delta t_{max}$; $\mathrm{exception\_raised} \gets \texttt{False}$ \\
  \While{$t < T$ and $\mathrm{exception\_raised} = \texttt{\upshape False}$ }{
    set $\Delta t_{init}$ (see section 4.2, e.g. $\Delta t_{init} \gets \min(2 \Delta t, \Delta t_{max})$ \\
    \try{}{
    compute $\Delta t$ and $(v(t + \Delta t), p(t+ \Delta t), w(t + \Delta t))$ via Algorithm \ref{alg:step} \\
    $t \gets t + \Delta t$
  }
  \catch{Exception}{
    $\mathrm{exception\_raised} \gets \texttt{True}$
  }  
  }
  \caption{Solve FSI equations}
  \label{alg:FSI}
\end{algorithm}

In order to simplify the use of arbitrary extension operators, we propose a splitting scheme that handles the extension in a separate step. To do so, we build on ideas and techniques that are also present in, e.g., \cite{Failer, XuYang}. 
The method proposed in \cite{Failer} substitutes the condition $\partial_t w_s = v_s$, i.e. $w_s (t+ \Delta t) = w_s (t) + \int_t^{t+ \Delta t} v_s(\xi) d\xi$, into $\sigma_s$. 

We motivate a similar procedure, use the reasoning of \cite{Failer} to reduce the system but combine it with another idea. After transformation to the physical domain, the choice of $w_f$ in the interior of $\Omega_f$ does not affect the result of the fluid solution (as long as it is ensured that the corresponding transformation is bi-Lipschitz). Therefore, in the first step, we use $w (t +  \Delta t) = \tilde w(t) + \int_0^{\Delta t} v(t + s) ds$, where $\tilde w(t)$ denotes a deformation field at timepoint $t$. After having computed $v(t+\Delta t)$ and $p(t+\Delta t)$, we compute $w\vert_{\Omega_s}(t+\Delta t)$ and $\tilde w (t+\Delta t)$ by applying an extension operator of the solid displacement onto the fluid domain. In this way, we solve for $(v,p)$ and $\tilde w$ separately without doing an approximation. Then, we need to recompute $(v,p)$ for the modified deformation field $\tilde w$ (which replaces $w$ for the time-stepping). Thus, the procedure requires to solve the fluid system twice, see also Algorithm \ref{alg:step}.

More precisely, assume we know the states $\tilde w$, $v$, $p$ at the time step $t$. In order to get the states at time step $t + \Delta t$ we have to solve three systems of equations. For the case of using an harmonic extension operator the procedure is given as follows.

\textbf{First system. }
Let $F(t + \delta t ) = I + D( \tilde w + \int_{t}^{t+ \delta t} v(\xi) d \xi)$, $J = \mathrm{det}(F)$ and get $v(t+\delta t)$, $p(t + \delta t)$ for any $\delta t > 0$ as the solution of 
\begin{align}
    \begin{split}
        &(J \rho_f \partial_t v, \psi^v)_{\Omega_f} + (J \sigma_f F^{-\top}, D \psi^v)_{\Omega_f}  \\
        &- (J \rho_f f, \psi^v)_{\Omega_f} + (\rho_s \partial_t v, \psi^v)_{\Omega_s} + (J \sigma_s F^{-\top}, D \psi^v)_{\Omega_s} 
        \\ 
        &+ (\mathrm{div}(J F^{-1} v), \psi^p)_{\Omega_f} + \alpha_p ( \nabla p, \nabla \psi^p)_{\Omega_s} = 0,
    \end{split}
\end{align}
with $\sigma_f = \sigma_f(F)$ and $\sigma_s = \sigma_s(F)$.

\textbf{Second system. }
Given the solution of the first system we consider the solution of
\begin{align}
    (\rho_s (\partial_t w - v), \psi_s^w)_{\Omega_s}  
    + \alpha_w (D w, D \psi_f^w)_{\Omega_f}  = 0,
\end{align}
where we impose the Dirichlet boundary conditions $w \vert_{\partial \Omega_f} = (\tilde w + \int_t^{t +\delta t} v(\xi) d \xi) \vert_{\partial \Omega_f}$. The solution of this system is used to update $\tilde w$.
(Here, we use the subscripts $\cdot_f$ and $\cdot_s$ in order to clarify that these two equations are solved independently.) It is straightforward to replace the harmonic extension with an arbitrary extension operator that extends the boundary displacement to the interior of the domain, e.g., also those that are not defined via the solution of a PDE.

\textbf{Third system. }
Given $\tilde w$ we obtain the updated $v, p$ as the solution of 
\begin{align}
    \begin{split}
        &(J \rho_f \partial_t v, \psi^v)_{\Omega_f} + (J \rho_f ((F^{-1}(v - \partial_t w))\cdot \nabla)v, \psi^v)_{\Omega_f} + (J \sigma_f F^{-\top}, D \psi^v)_{\Omega_f}  \\
        &- (J \rho_f f, \psi^v)_{\Omega_f} + (\rho_s \partial_t v, \psi^v)_{\Omega_s} + (J \sigma_s F^{-\top}, D \psi^v)_{\Omega_s}
        \\ 
        &+ (\mathrm{div}(J F^{-1} v), \psi^p)_{\Omega_f} + \alpha_p ( \nabla p, \nabla \psi^p)_{\Omega_s} = 0.
    \end{split}
\end{align}
where $F = I + D( \tilde w)$, $J = \mathrm{det}(F)$, $\sigma_f = \sigma_f(F)$ and $\sigma_s = \sigma_s( F_s)$ with $F_s(t + \delta t) =  \tilde w + \int_{t}^{t+ \delta t} v(\xi) d \xi$. 

\subsection{Discretization} \label{sec:fsi_discretization}
The discretization is done similar to \cite{Wick, Failer, HaubnerDiss}. We use a triangulation of the domains and choose the lowest-order Taylor-Hood finite elements for the velocity and pressure. The deformation is discretized using piecewise quadratic continuous finite elements. As in \cite{Wick}, we further choose a shifted Crank-Nicolson scheme ($\theta$-scheme with $\theta = \frac12 + \Delta t_{max}$ for sufficiently small $\Delta t_{max} > 0$) for performing the time-stepping in case we work with STVK type material and implicit Euler as time-stepping scheme when we work with IMR type material. The pressure term and the incompressibility condition are handled implicitly. Analogous to \cite{Wi13_fsi_with_deal}, in the first summand of the weak form $(J \rho_f \partial_t v, \psi^v)_{\Omega_f}$, $J$ is replaced by a $J_\theta := \theta J(t_n) + (1 - \theta) J(t_{n-1})$, which is a convex combination of the determinant of the deformation gradient of the previous time $t_{n-1}$ and the current time $t_n$.

Since the splitting schemes requires a smaller time-step size than classical monolithic approaches, we work with a time-step size that is iteratively adapted. We choose a time-step-size range $[\Delta t_{min}, \Delta t_{max}]$ and start with time-steps of size $\Delta t = \Delta t_{max}$. If the systems with the maximal time-step get unsolvable, we choose $\Delta t = \max(\Delta t_{min}, \frac12 \Delta t)$. This is repeated until we obtain solvability or reach $\Delta t_{min}$. If the system is not solvable for $\Delta t = \Delta t_{min}$, we stop. After each successful step we adapt the time-step by setting $\Delta t = \min(2\Delta t, \Delta t_{max})$. The algorithm is sketched in Algorithms \ref{alg:step}--\ref{alg:FSI}.

Simulations are performed on a desktop computer with an AMD Ryzen Threadripper 3970X 32-Core CPU and 128 GiB memory. Neural networks for the NN-corrected harmonic extension approach are trained on an NVIDIA GeForce GTX 1660 SUPER GPU.
Nonlinear systems are solved using a Newton based nonlinear solver with linesearch in PETSc \cite{petsc2, petsc3, petsc4}. Linear systems are solved using the direct solver MUMPS \cite{mumps1, mumps2}.

\subsection{Numerical Results for FSI benchmark problem II}

In order to validate the splitting of the FSI system in section \ref{Section2}, we apply it to the FSI benchmark II \cite{FSIbenchmark} and plot the displacement of the tip of the flap in Figure \ref{Figure_classic}, compare e.g. \cite[p. 256]{FSIbenchmark} or \cite[Fig. 5.6]{HaubnerDiss}. 
We test the harmonic extension, the biharmonic extension and an incremental version of the harmonic extension where the extension is performed on the deformed domain of the previous timestep and only the change of the displacement is extended harmonically, see \cite{Shamanskiy2020}. Moreover, Figure \ref{Figure_classic} shows the minimal determinant value of the deformation gradient. The simulations of the standard and incremental (see \cite{Shamanskiy2020}) harmonic extension break if the determinant of the deformation gradient becomes negative. Therefore, with these approaches, it is not possible to simulate the whole time interval.
\begin{figure}
    \includegraphics[width=0.48\textwidth]{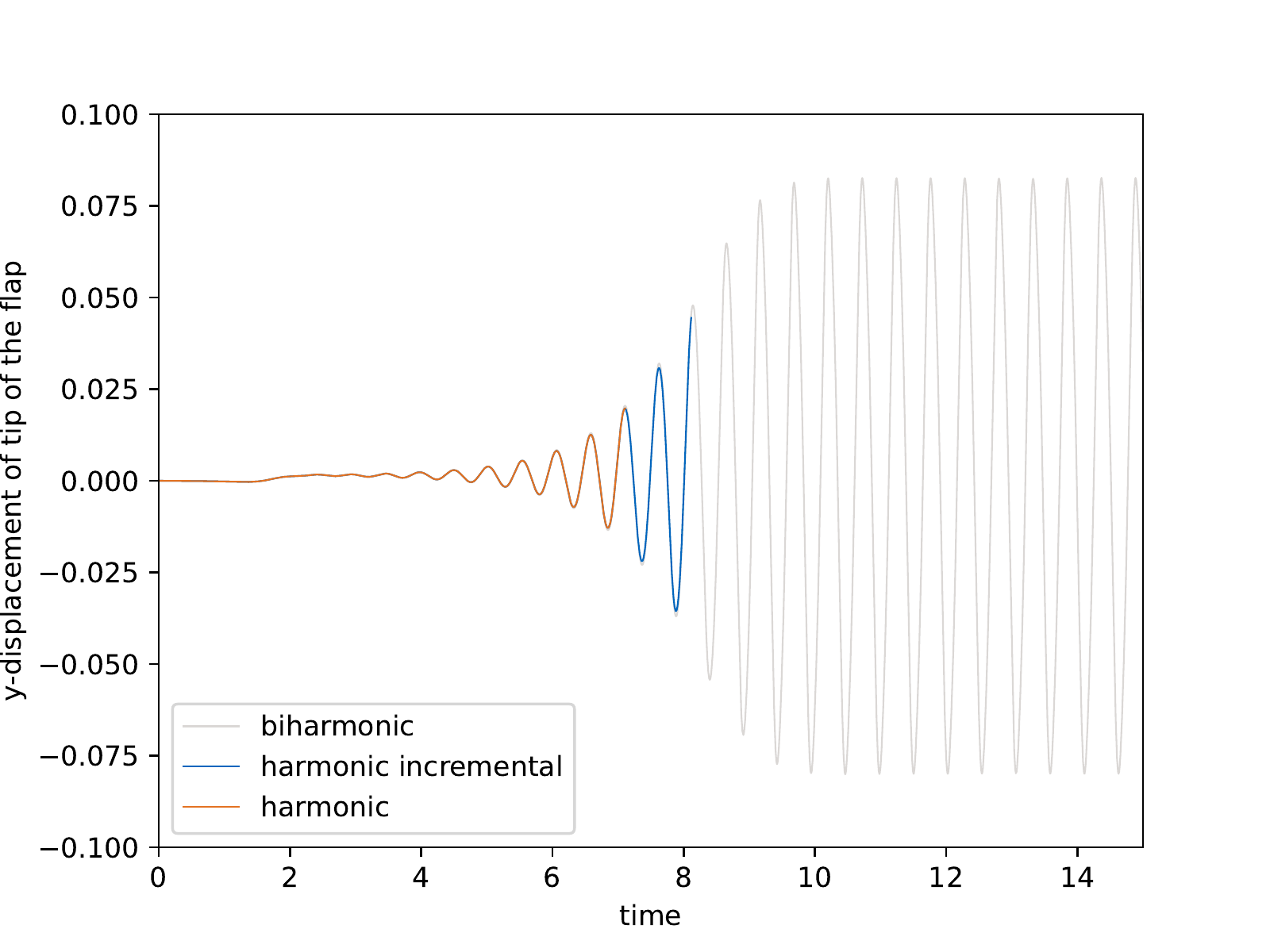}	\includegraphics[width=0.48\textwidth]{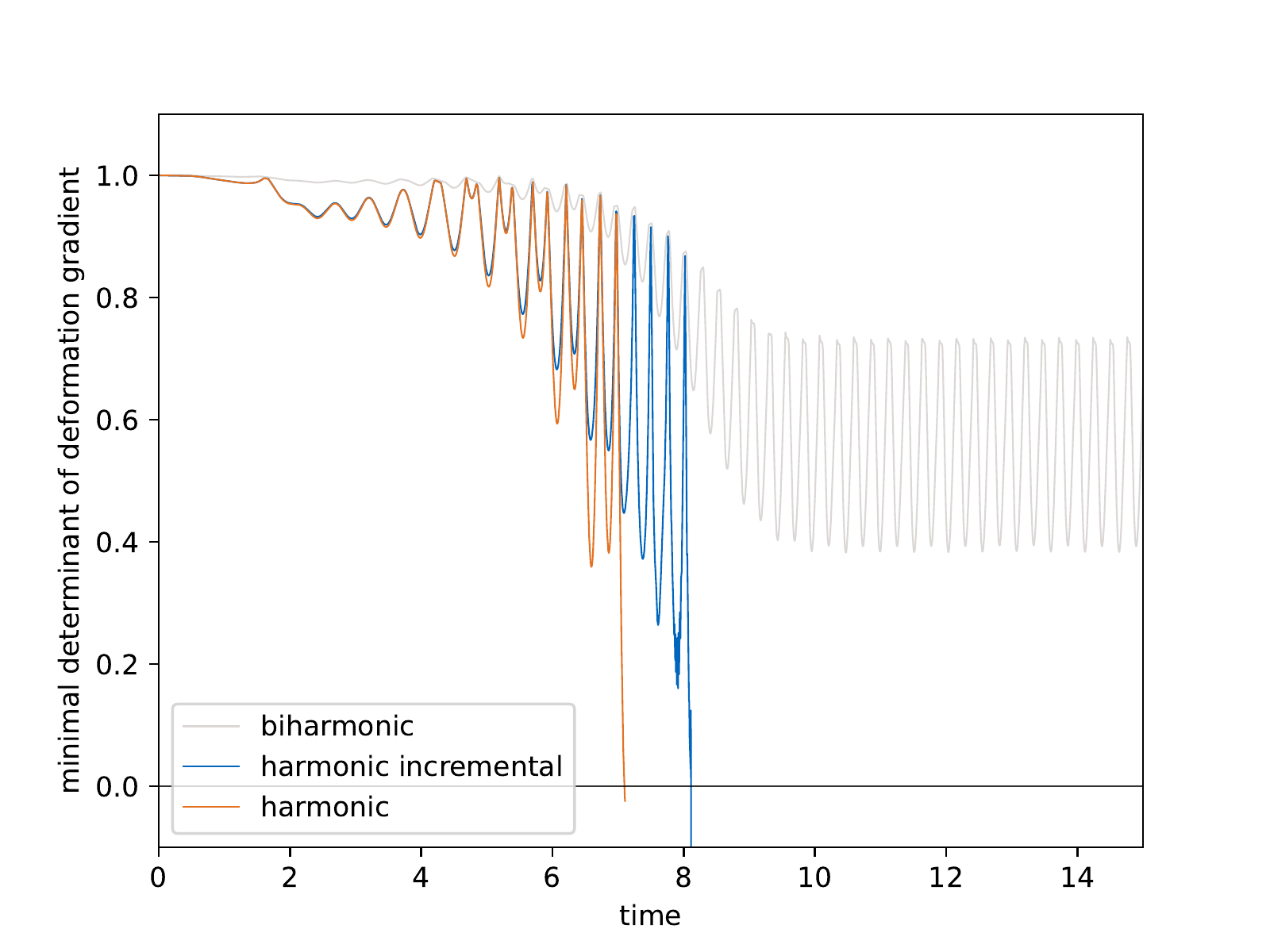}
        \vspace{-15pt}
    \caption{Numerical results for harmonic and biharmonic extensions. }
    \label{Figure_classic}
\end{figure}

\subsubsection{Generation of Training Data}
\label{ssData}

\textbf{\\Training data via FSI benchmark II.}

For a proof of concept study, we first generate training data by running the FSI benchmark II with the biharmonic extension and a time-step size of $0.0025$, and save the deformations for every time step in $(15, 17]$ for the hybrid PDE-NN (and $(11, 17]$ for the NN-corrected harmonic extension approach), i.e., we generate $N_d = 800$ (and $N_d=2400$, respectively) data points.  

\begin{remark}
    This procedure only makes sense if the FSI problem is solved several times as it is the case, e.g., if the FSI equations are the governing equations of a PDE constrained optimization problem. Moreover, since it is not necessary that the test data is physically realistic, the test set can also be artificially generated. Instead of solving the biharmonic equations and using this as a reference value, also unsupervised learning approaches 
    \begin{align*}
        \min_{\theta \in \Theta, \diss \in W} \frac{1}{N_d} &\sum_{i=1}^{N_d}\mathcal J(\diss^i) + \lambda \mathcal R(\theta) \\ 
        & \text{s.t. } \diss^i = \mathrm{Ext}_\theta(\bc^i)\quad \text{for } i \in \lbrace 1, \ldots, N_d \rbrace
    \end{align*}
    can be considered, where $\mathrm{Ext}_\theta$ denotes an extension operator that depends on the parameters $\theta$ and $\mathcal J$ denotes a quality measure for the deformed mesh. For example, one can choose $\mathcal J$ such that the determinant of the transformation gradient is bounded away from $0$, see \cite{HSU20}, or another mesh quality measure is optimized. This, however, is left for future research.
\end{remark}

\textbf{Artificial training data.}

We also created an artificial dataset to train our networks on. This artificial dataset was created by solving for the stationary deformation of a neo-Hookean body representing the solid domain, with the same Lamé parameters as the solid in the FSI simulations. Specifically, we prescribe homogeneous Dirichlet boundary conditions for the displacement on the cylinder boundary and let the 
body deform under boundary loads selected by hand to create deformations of the solid domain resembling by eye the deformations encountered in the FSI benchmark problem II. Six different load configurations were selected and for each of these configurations 101 different deformations were computed, by varying the amplitude of the loads by cosine factors for 101 equally spaced $\theta$ from $0$ to $2\pi$. 
These solid deformations were extended to the fluid domain by the harmonic and biharmonic extension, to serve as input data and target data respectively. 

In each base load configuration, there is one boundary load
\begin{equation*}
    (0, F_\mathrm{tip}) \cdot \mathrm{cos} (\theta)
\end{equation*}
acting vertically on the right side of the deformable body and also a boundary load acting vertically and equally on the bottom and top side of a limited part of the deformable body, given by
\begin{equation*}
    \begin{cases}
        (0, F_\mathrm{side}) \cdot \mathrm{cos} (\theta - \varphi ) \quad & \text{if } \lvert x - c \rvert < d, \\
        (0, 0) & \text{else. }
    \end{cases} 
\end{equation*}
The parameters used to create the artificial dataset are given in table \ref{tab:artificial_dataset_params}.

\begin{table}[]
    \centering
    \begin{tabular}{c|c c c c c c}
        $F_\mathrm{tip}$ & 1.925e3 & 0.6e3 & 1.4e3 & 1.76e3 & 0.53e3 & 1.99e3 \\
        $F_\mathrm{side}$ & -1.7e3 & -0.6e3 & -0.2e3 & -0.66e3 & 0 & -1.94e3 \\
        $\varphi$ & 0 & $-\pi/4$ & 0 & 0 & 0 & 0 \\
        $c$ & 0.4 & 0.4 & 0.5 & 0.45 & 0.45 & 0.4 \\
        $d$ & 0.02 & 0.02 & 0.04 & 0.04 & 0.04 & 0.02
    \end{tabular}
    \caption{Parameters for the six base load configurations used to create the artificial dataset.}
    \label{tab:artificial_dataset_params}
\end{table}

    \begin{figure}
    \centering
    \includegraphics[width = 0.15\textwidth]{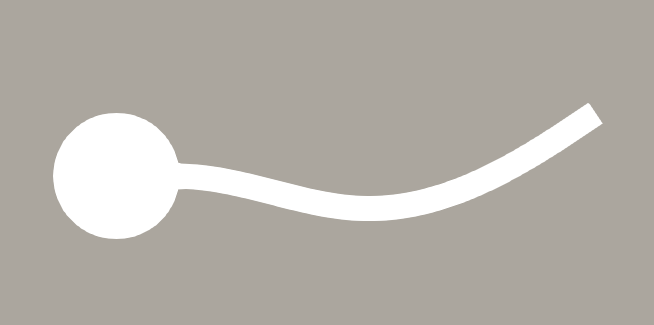}\hfill
    \includegraphics[width = 0.15\textwidth]{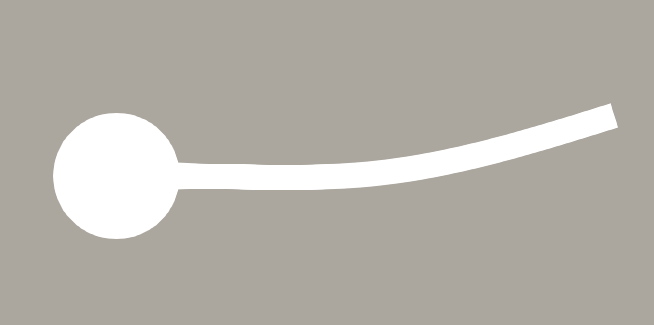}\hfill
    \includegraphics[width = 0.15\textwidth]{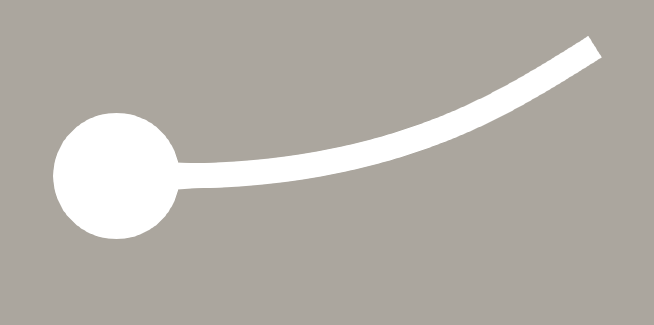}\hfill
    \includegraphics[width = 0.15\textwidth]{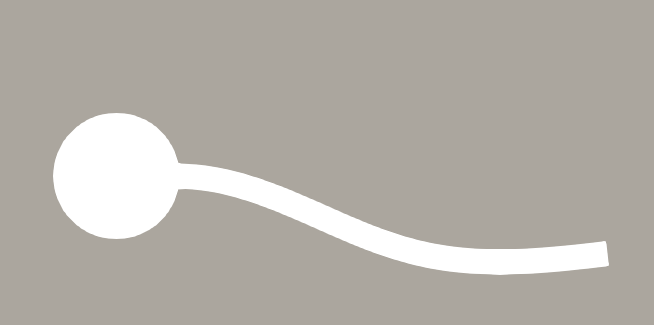}\hfill
    \includegraphics[width = 0.15\textwidth]{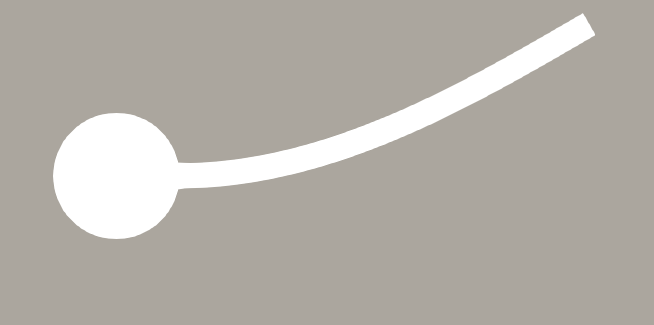}\hfill
    \includegraphics[width = 0.15\textwidth]{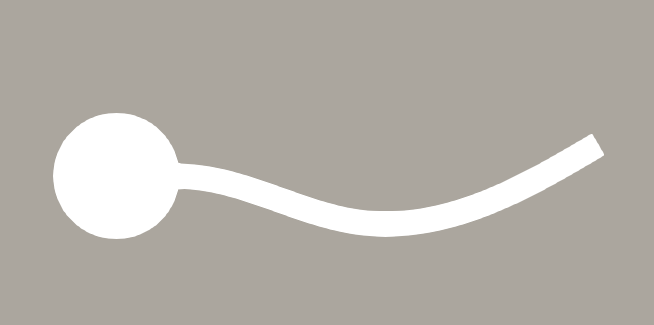}
        \vspace{-10pt}
\caption{The body deformations caused by the six base load configurations the artificial dataset is based on.}
    \label{fig:artificial_dataset_base_loads}
\end{figure}

\subsubsection{Hybrid PDE-NN Approach}
\label{ssHybrid}

In order to reduce the computational burden, we only take every $\lfloor \frac{N_d}N \rfloor$th training data point into account and work with $N = 20$ for the FSI training data set and $N = 60$ for the artificial data set. Every objective function evaluation requires the solution of $N$ nonlinear PDEs (which could potentially be done fully in parallel). In our numerical realization we do not use regularization, i.e., we work with $\lambda = 0$.

We choose $\alpha$ to be a neural network with two hidden layers of width $5$ with bias on the two hidden layers. 
Let $\theta_{opt}$ denote the approximate solution of the optimization problem \eqref{optimizationprob} that we obtain by applying an L-BFGS method with at maximum $100$ iterations. 
Figure \ref{LearnedNet} visualizes a neural network for $\theta = \theta_{opt}$ that we obtained by training on the FSI and artificial dataset. In our numerical experiments $\| \nabla \diss \|^2$ reaches values up to around $0.6$ for the nonlinear learned extension operator applied to the FSI benchmark II.

\begin{figure}
    \centering
    \includegraphics[width = 0.45\textwidth]{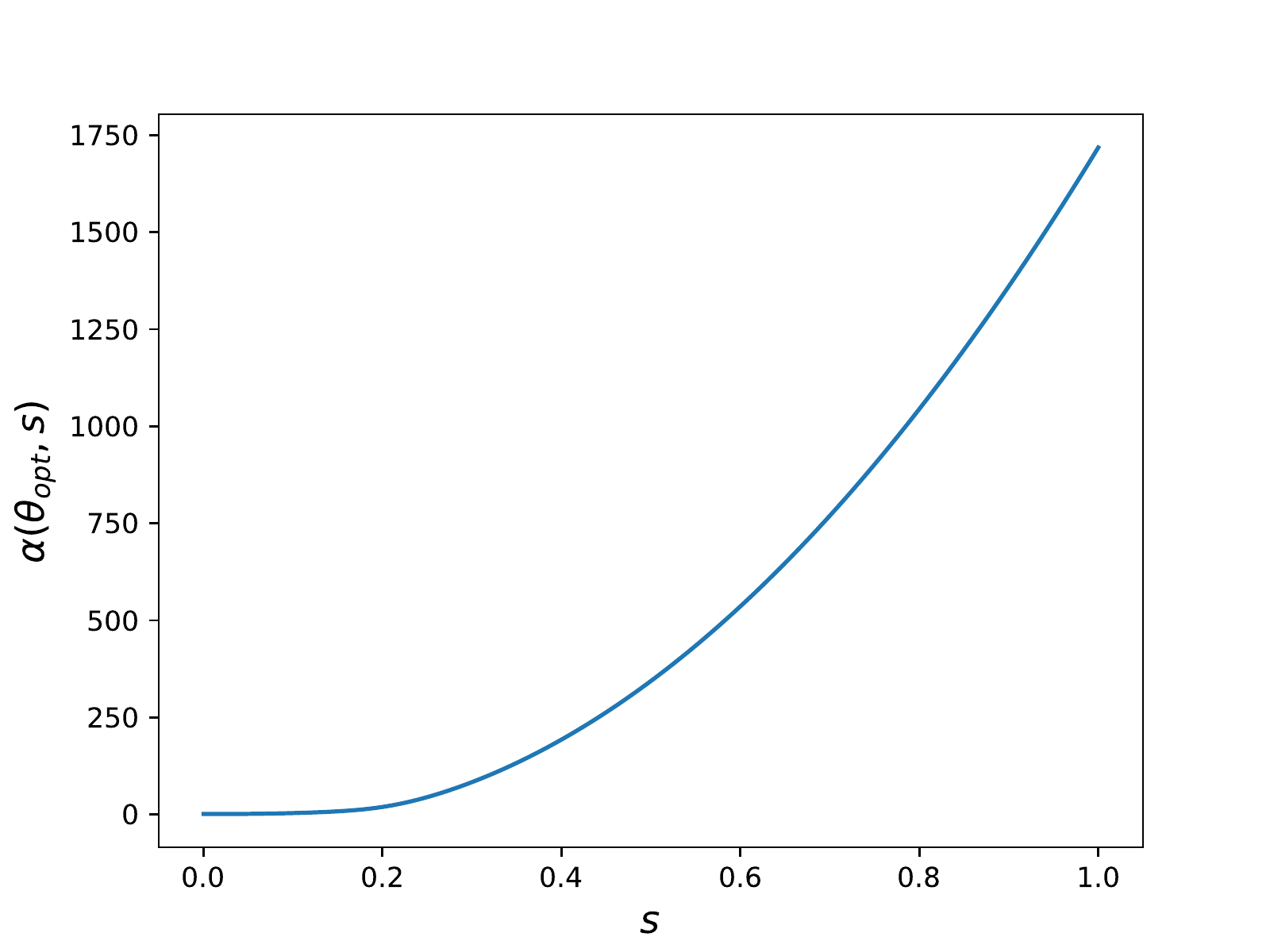}
    \includegraphics[width = 0.45\textwidth]{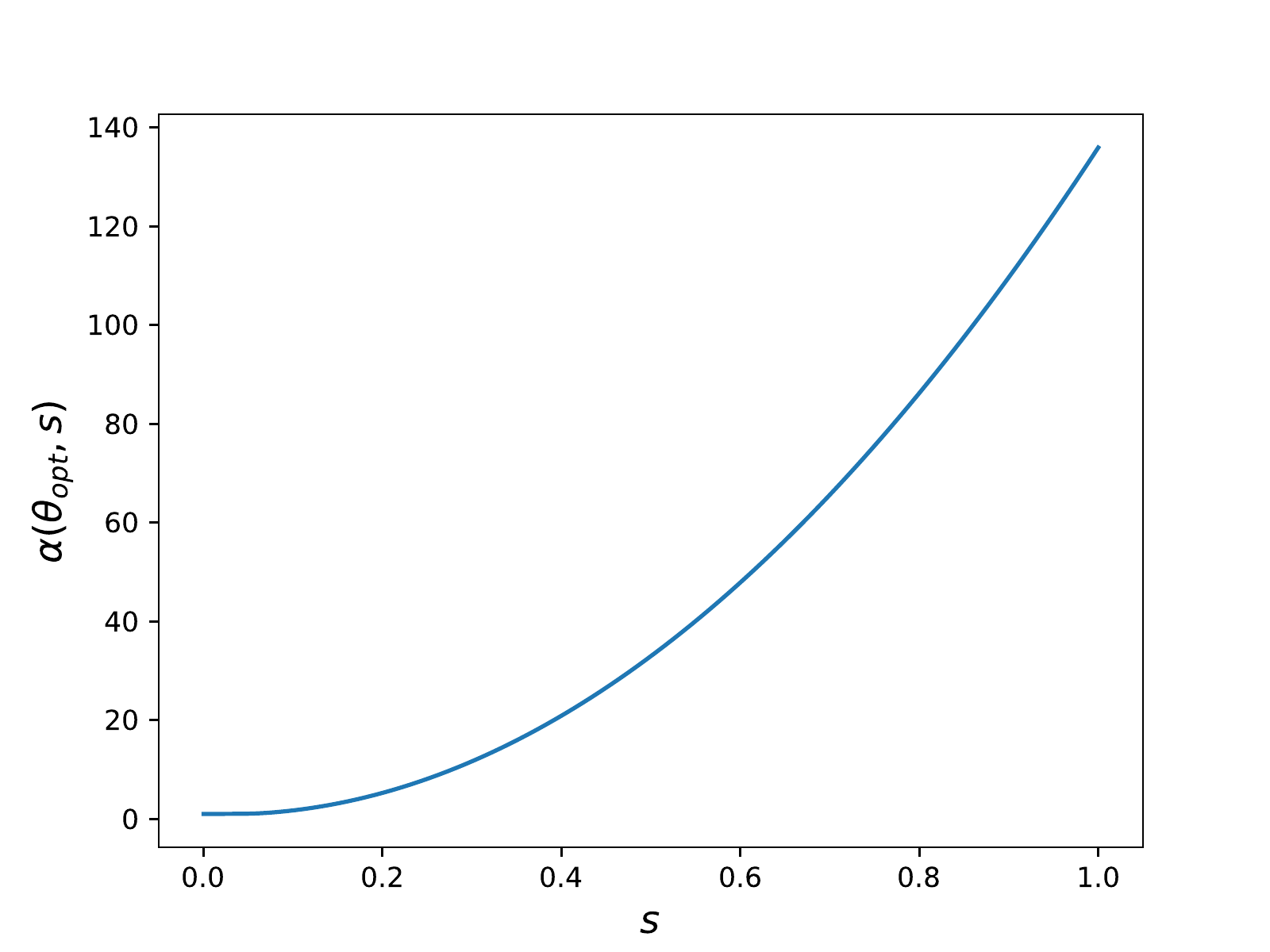}
      \vspace{-15pt}
    \caption{Visualization of learned neural net $\alpha(\theta, s)$ in the hybrid PDE-NN approach based on training data via FSI benchmark II (left) and artificial training data (right).}
    \label{LearnedNet}
\end{figure}

We present numerical results for the FSI benchmark II in Figure \ref{Figure_learned_hybrid} and Figure \ref{Figure_learned_hybrid_artificial}, where we choose $\Delta t_{max} = 0.01$ and $\Delta t_{min}= \frac1{128} \Delta t_{max}$. More precisely, we show the $y$-displacement of the tip of the flap and the minimal determinant of the deformation gradient, which serves as an indicator for mesh degeneration, see \cite{Wick}. We consider three different strategies for solving \eqref{firstordercond}: 
\begin{itemize}
    \item \textbf{Strategy 1:} First we try the learned nonlinear extension operator, i.e., we solve \eqref{PDEalpha} with $\bar \alpha$ defined by \eqref{choiceNN} and $\theta = \theta_{opt}$.
    This strategy does not fail due to mesh degeneration. However, it requires several Newton-steps for large displacements.
    \item \textbf{Strategy 2:} In order to avoid the nonlinearity, we consider a strategy that uses a linear version of the extension operator. Working with a ``lagging nonlinearity'', i.e., solving \eqref{PDEalpha} with $\bar \alpha$ defined by \eqref{choiceNN} being approximated by
    \begin{align}
        \bar \alpha (\theta_{opt}, \xi, \diss, \nabla \diss) \approx \alpha(\theta_{opt}, \| \nabla \diss_{old} \|^2),
    \end{align}
    where $\diss_{old}$ denotes the displacement of the previous time-step, shows unsatisfactory numerical behavior. Therefore, we combine it with the idea of using incremental versions for extension operators, see \cite{Shamanskiy2020}. We present results for the incremental ``lagging nonlinearity'' strategy, where we solve
    \begin{align*}
        - \mathrm{div}(\alpha(\theta_{opt}, \| \nabla \diss_{old} \|^2) \nabla \diss_\Delta) = 0 \qquad &\text{in } (\mathrm{id} + \diss_{old})(\Omega), \\ 
        \diss_\Delta = \bc - \bc_{old} \qquad&\text{on }  \partial ((\mathrm{id} + \diss_{old})(\Omega)), 
    \end{align*}
    and use the update
    \begin{align*}
        u = u_{old} + u_\Delta \circ (\mathrm{id} + u_{old})^{-1}.
    \end{align*}
    This strategy is successful even for the largest appearing displacements. However, the mesh quality decreases during the course of the simulations and it is not possible to simulate the whole $15$ seconds.
    \item \textbf{Strategy 3:} In order to counteract the decreasing mesh quality that occurs with the second strategy during the oscillations, we consider a third strategy that combines strategies 1 and 2. If the vertical displacement of the tip of the flap is below a certain threshold (we use $0.005$), we work with strategy 1, otherwise we choose strategy 2. This approach allows us to simulate the whole $15$ seconds and the mesh quality stays stable during the oscillations. Moreover, the extension equation of strategy 1 just needs to be evaluated for few time-steps and in a regime with small deformations at the tip. Instead of using strategy 1 for small deformations, one can also directly use the harmonic extension. We design the hybrid PDE-NN extension such that it is equal to the harmonic extension for small deformations, see \eqref{choiceNN}.
\end{itemize}

\begin{figure}
    \includegraphics[width=0.48\textwidth]{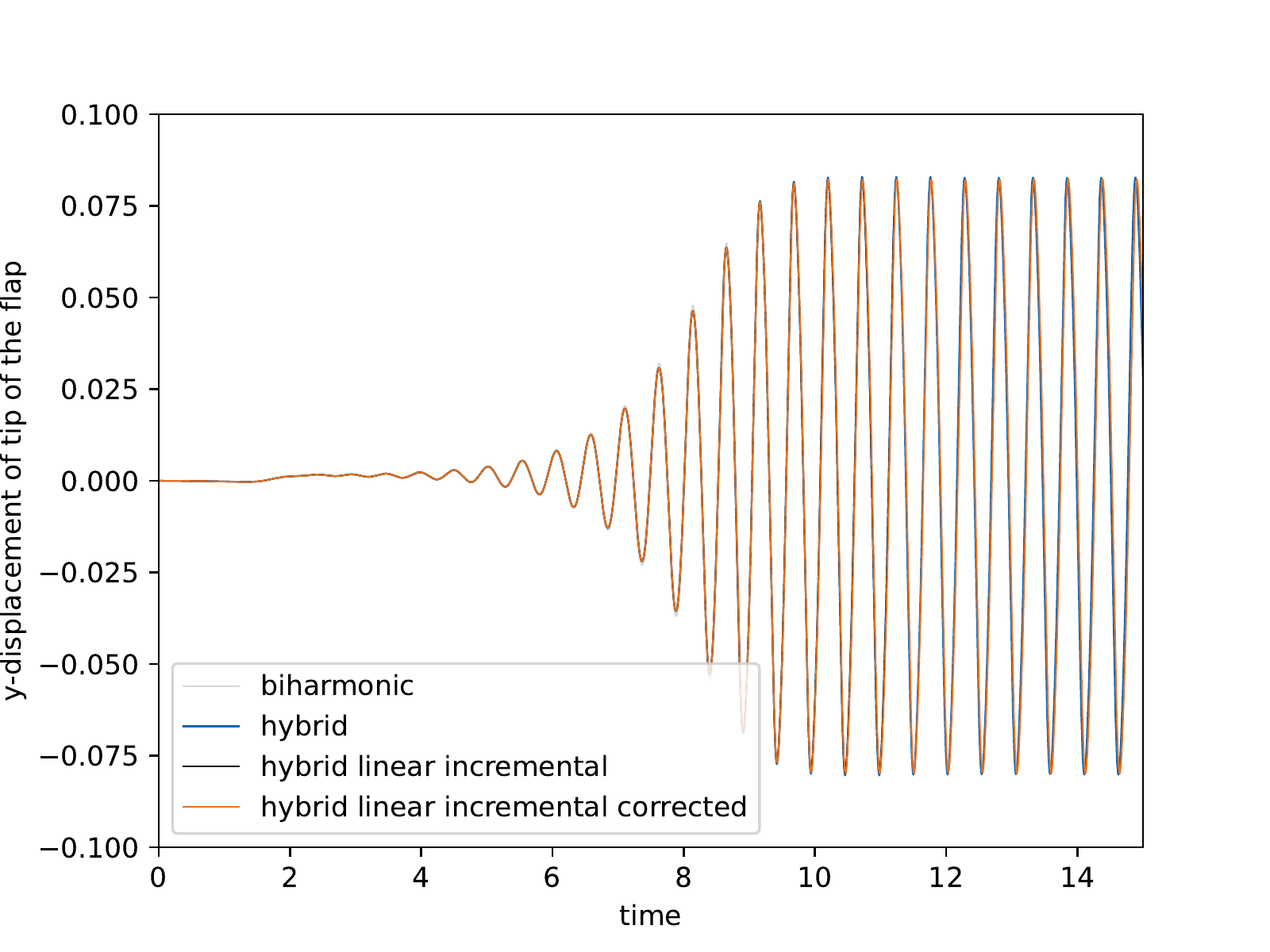}
    \includegraphics[width=0.48\textwidth]{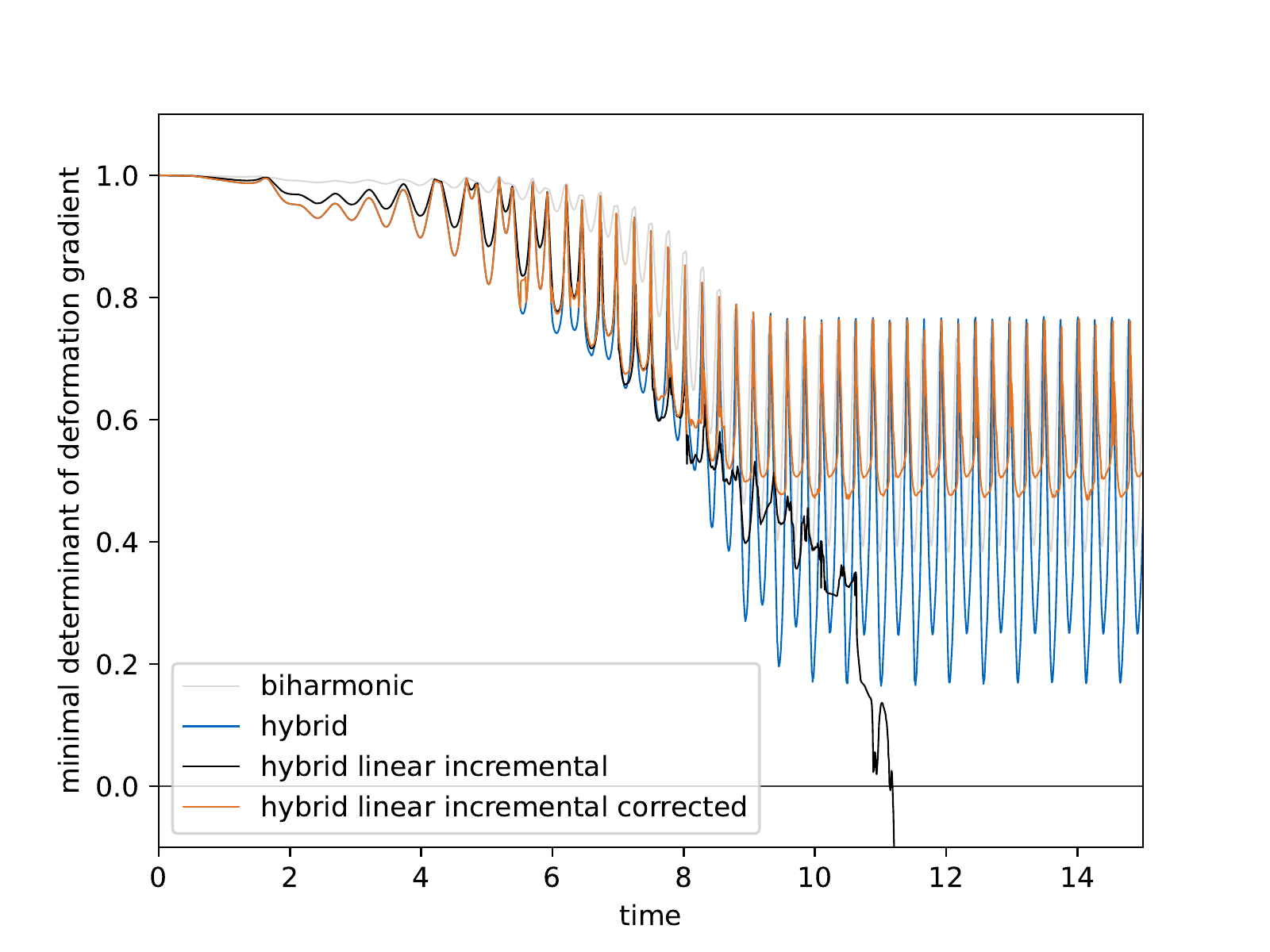}
\vspace{-15pt}
    \caption{Numerical results for hybrid PDE-NN approach, trained on the FSI benchmark II training set.}
    \label{Figure_learned_hybrid}
\end{figure}

    \begin{figure}
    \includegraphics[width=0.48\textwidth]{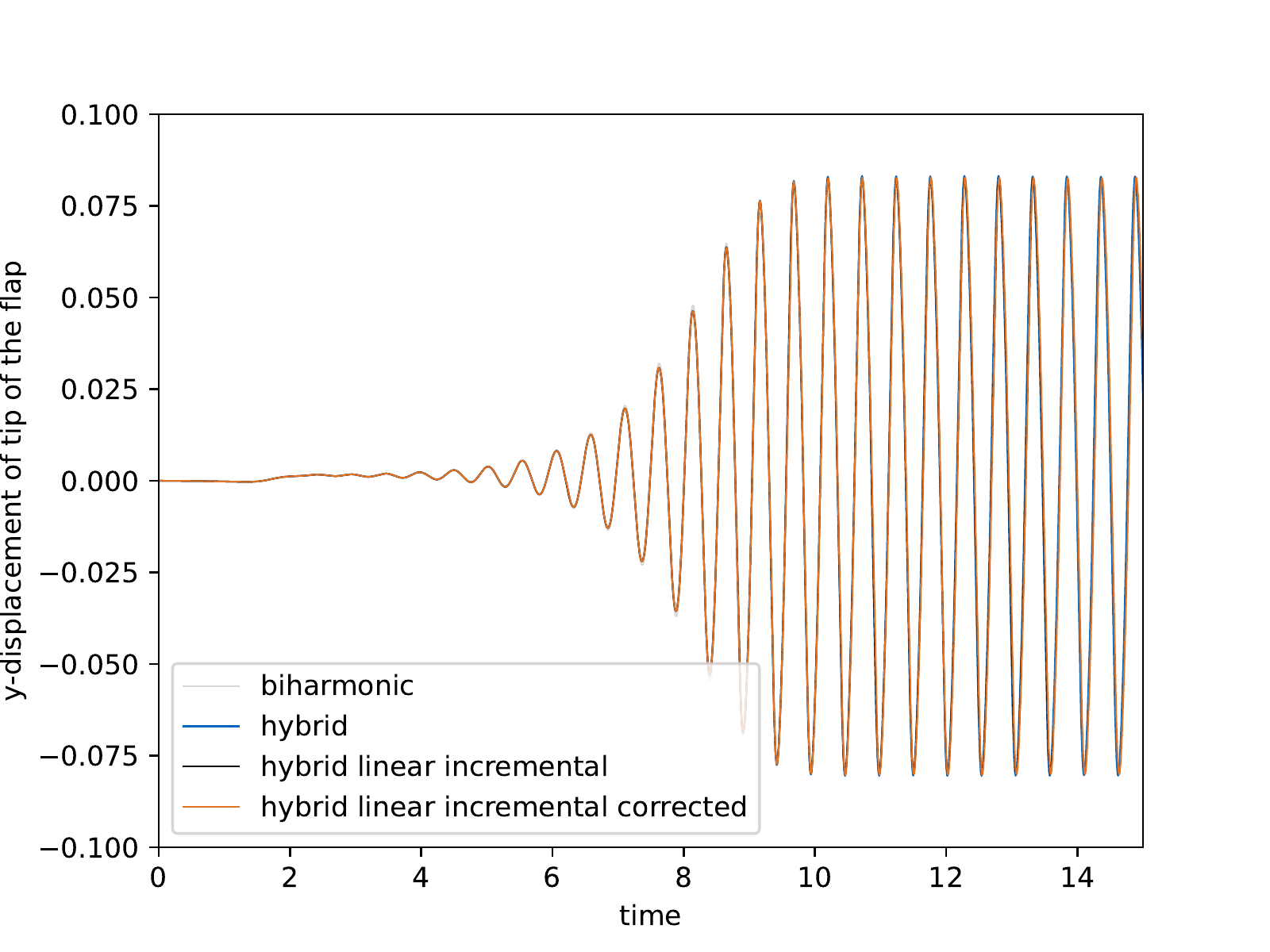}
    \includegraphics[width=0.48\textwidth]{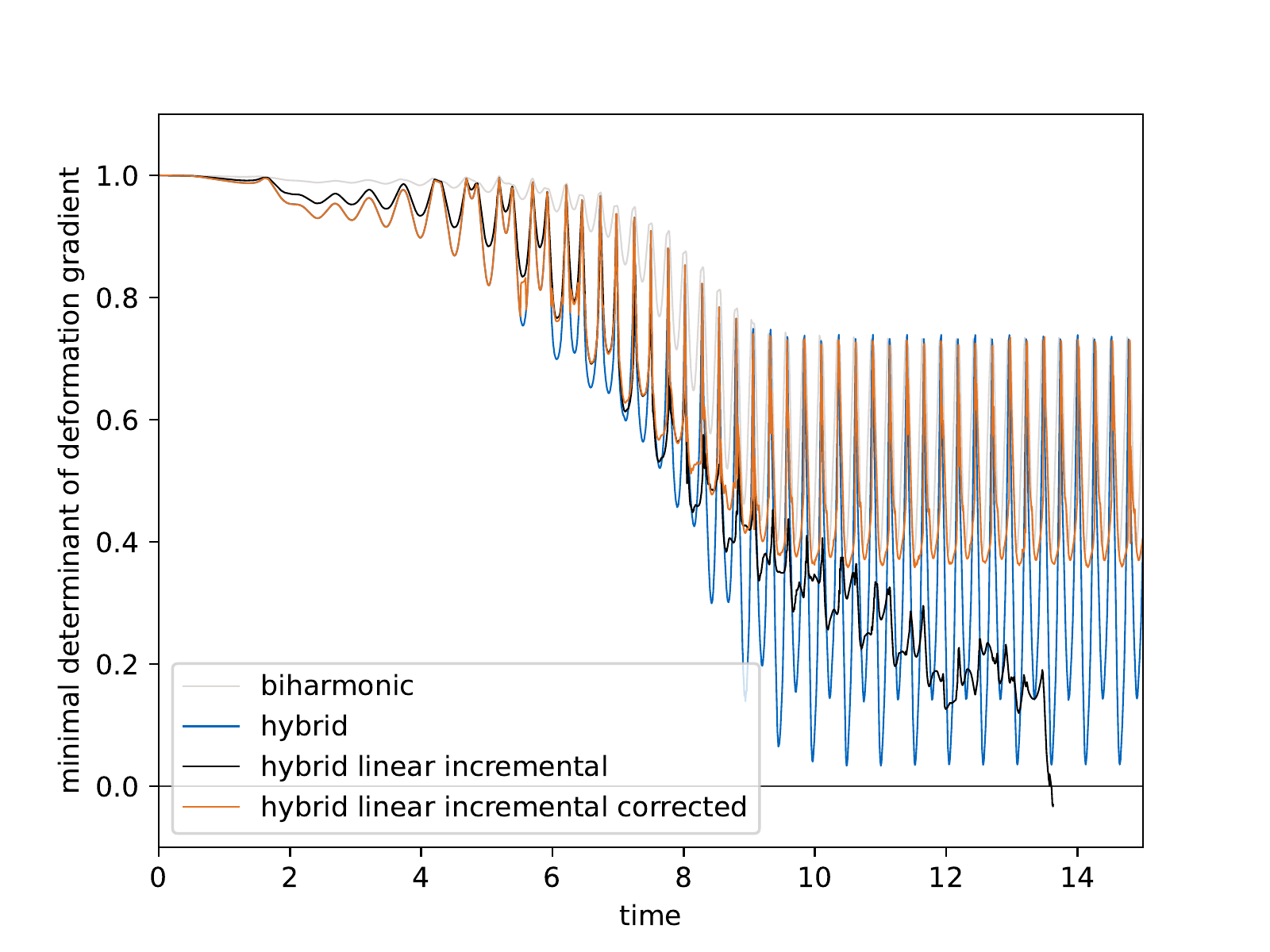}
\vspace{-15pt}
    \caption{Numerical results for hybrid PDE-NN approach, trained on the artificial training set.}
    \label{Figure_learned_hybrid_artificial}
\end{figure}

\subsubsection{NN-corrected harmonic extension approach}\label{sec:corrected_harmonic_results}

For the NN-corrected harmonic extension, we partition the FSI dataset by selecting the first 1800 snapshots for the training dataset, the following 200 snapshots for the validation dataset, and the last 400 snapshots for the test dataset. The validation set is used to make decisions during training, such as for learning rate scheduling, but is not used to compute gradients of the cost function by backpropagation. The test set is set aside until the networks are fully trained and then used to evaluate the performance of the networks. Because the artificial dataset is not periodic like the FSI dataset, it was randomly split into 85\% training data and 15\% validation data. 
The FSI test dataset is used for testing networks trained on both the FSI dataset and the artificial dataset. 

We use a standard MLP for $\mathcal{N}_\theta$ with 6 hidden layers of width 128 and $\mathrm{ReLU}$ activation function. The network has $8$ input features (as here $\Omega\subset\mathbb{R}^2)$, one for each component of $\xi$, $u_\mathrm{harm}$, and $\nabla u_\mathrm{harm}$, and $2$ output features, one for each component of the extension.
Before the MLP, we include a normalization layer, which transforms the neural network's inputs by the affine transformation $x_j \mapsto \frac{x_j - \mu_j}{\sigma_j}$,
where the real numbers $\mu_j$, $\sigma_j$, $j=1, 2, \dots, 8$ are chosen such that the inputs of $\mathcal{N}_\theta$ will have componentwise zero mean and unit variance over all vertices and snapshots of the training set. 
This normalization layer is part of the neural network's architecture and therefore the values of $\mu_j$ and $\sigma_j$ are not changed between training and testing. In contrast to normalization approaches like batch and layer normalization, the values of $\mu_j$ and $\sigma_j$ are not trainable parameters.

We implement the neural network $\mathcal{N}_\theta$ using \texttt{PyTorch}\cite{paszke2019pytorch}. The harmonic extension $u_\mathrm{harm}$ and boundary condition-preserving function $l$ are computed using \texttt{FEniCS}\cite{alnaes2015fenics}, the FEM framework we use for the rest of our computations. The correction made by $\mathcal{N}_\theta$ is translated into the finite element function spaces by adding the vertex evaluations of $\mathcal{N}_\theta$ to the basis coefficient vectors.

We train our networks in a supervised fashion by minimizing the sum of absolute error in each mesh vertex for all snapshots, with the biharmonic extension as target. This can be formulated in the learning problem
\begin{equation}\label{eq:NN-corr_training_problem}
     \min_\theta \sum_{i} \sum_{\xi \in \mathcal{V}} \lVert u_\mathrm{harm}^i(\xi) + l(\xi) \mathcal{N}_\theta\left(\xi, u_\mathrm{harm}^i(\xi), D_c u_\mathrm{harm}^i(\xi) \right) - u_\mathrm{biharm}^i(\xi) \rVert_1,
\end{equation}
where $\mathcal{V}$ denotes the set of vertices in the computational mesh used to generate the dataset.

To solve \eqref{eq:NN-corr_training_problem}, we used the \texttt{AdamW} optimizer\cite{Loshchilov2017AdamW}, with random batches of 128 snapshots and every mesh vertex included for each snapshot in the batch. We used the \texttt{PyTorch}-built in \texttt{ReduceLROnPlateau} learning rate scheduler, with the reduction factor set to $0.5$ and otherwise default parameters, and weight decay regularization with factor $0.01$. On the FSI dataset we trained the network for 500 epochs. When training on the artificial dataset, which is smaller, we stopped the training process after only 200 epochs, as we found the training process had stabilized by then.

The network and training hyper-parameters were chosen by using common practices in deep learning and no extensive hyper-parameter search was conducted. The number and width of the hidden layers was increased until we found the network produced satisfactory mesh motion. A brief analysis of the models dependence on random initialization, the function $l$ used, and the number of hidden layers in the neural network is included in section \ref{sec:parameter_study}.

To predict the correction for a given harmonic extension $u_\mathrm{harm}$, we pre-process $u_\mathrm{harm}$ to supply the Clément interpolated gradients at each mesh vertex. For the training process, we did this pre-processing beforehand. When using the trained network as part of the FSI-simulation, the Clément interpolation is done in each time step before the second step of the splitting method described in section \ref{Section2}. This interpolation is cheap when done repeatedly, as it can be realized by multiplication with the same pre-assembled sparse matrix for any given mesh. As can be seen in Figure \ref{fig:timings_plot_2}, the cost of the Clément interpolation is modest compared to computing the harmonic extension and neural network output and scales well with increased mesh refinement.

To test the fully trained neural network correction \eqref{eq:corrected_harm_extension}, we evaluate it by computing the scaled Jacobian mesh quality measure over the FSI test set and by using the extension strategy on the same FSI test problem as the hybrid PDE-NN approach. We solve the FSI-system \eqref{eq:monolithic_fsi} using the same splitting and discretization approach, outlined in sections \ref{Section2}-\ref{sec:fsi_discretization}. The NN-corrected extension operator is not a PDE, but the splitting of the system allows us to use this extension.

The scaled Jacobian mesh quality measure returns the value 1 for equilateral triangles and goes to zero as a cell degenerates. 
The extension trained on the FSI dataset was able to produce a quality mesh for all snapshots in the test set, with the minimal scaled Jacobian mesh quality measure being approximately 0.32 across all cells and snapshots. Trained on the artificial dataset, the extension was able to produce a functional mesh for all snapshots, but not of high quality. On some snapshots, the resulting mesh was nearly degenerate, with minimal scaled Jacobian mesh quality indicator being approximately 0.07 across all cells and snapshots. The minimum cell values of scaled Jacobian mesh quality indicator over the FSI test set are shown in Figure \ref{fig:min_sjmq}. In Figure \ref{fig:cellcomparison}, we show the cell resulting in the lowest scaled Jacobian mesh quality value before and after the mesh motion.

\begin{figure}
    \centering
    \includegraphics[width=0.98\linewidth]{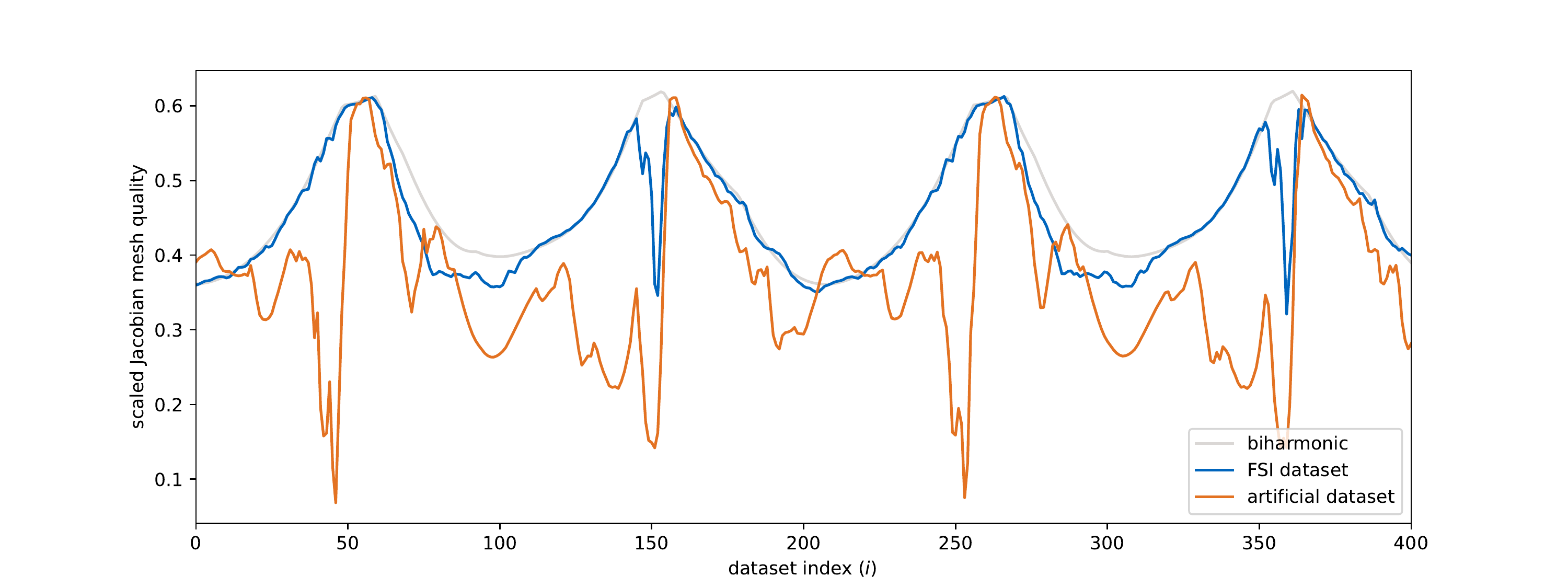}
    \caption{Minimum cell value of scaled Jacobian mesh quality indicator over the FSI test set for biharmonic extension and NN-corrected harmonic extension trained on FSI dataset and artificial dataset.}
    \label{fig:min_sjmq}
\end{figure}

\begin{figure}
    \centering
    \includegraphics[width=0.9\linewidth]{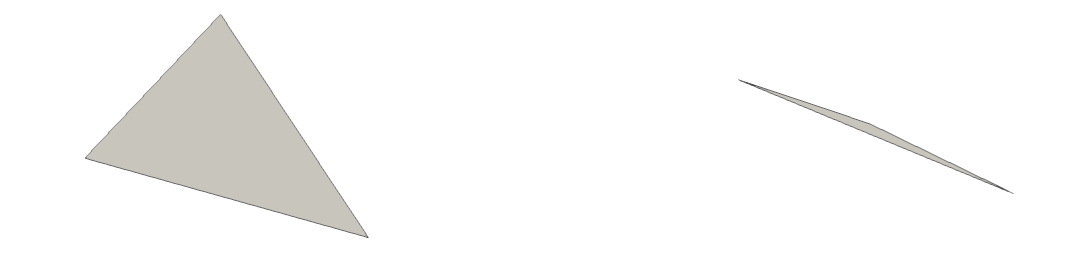}
    \caption{Cell which attains lowest value of scaled Jacobian mesh quality across all timesteps with the artificial-trained NN-corrected harmonic extension, before (left) and after (right) mesh motion. The left cell has scaled Jacobian mesh quality value of $0.755$ and the right cell has scaled Jacobian mesh quality value of $0.068$.}
    \label{fig:cellcomparison}
\end{figure}

When implemented as the mesh motion strategy in the FSI benchmark II simulation, the FSI-trained extension is successful and the simulation runs the whole 15 seconds period, as shown in Figure \ref{Figure_torch_corrected_fsi_trained}. Using the extension trained on the artificial dataset, the FSI simulation was also able to run for the whole 15 second period, as shown in Figure \ref{Figure_torch_corrected_artificial_trained}, but with lower mesh quality.

Figure \ref{Figure_artificial_data_hist_upper_right} shows a histogram of inputs at $\xi = (0.6, 0.21)$, the upper right corner of the flag, which is one of the points where we observed the mesh to degrade the most. The missing tails in the neural network inputs at this point from the artificial dataset indicate that there are characteristic deformations in the FSI simulation that are not available for the artificial dataset. 

The network was trained on a GPU and the process took approximately 16 minutes.
Evaluating the network to produce an extension correction for every vertex of the mesh in the FSI simulations takes approximately 10 milliseconds per time step. The simulations run to produce the results in Figures \ref{Figure_torch_corrected_fsi_trained}-\ref{Figure_torch_corrected_artificial_trained} consisted of $2233$ time steps, resulting in around 20 seconds spent on evaluating the neural network, although the real number is higher, since a portion of the time steps were rejected because the non-linear solver did not converge. In comparison, the total run time of the simulations was one order of magnitude higher than the training process.

\begin{figure}
    \includegraphics[width=0.48\textwidth]{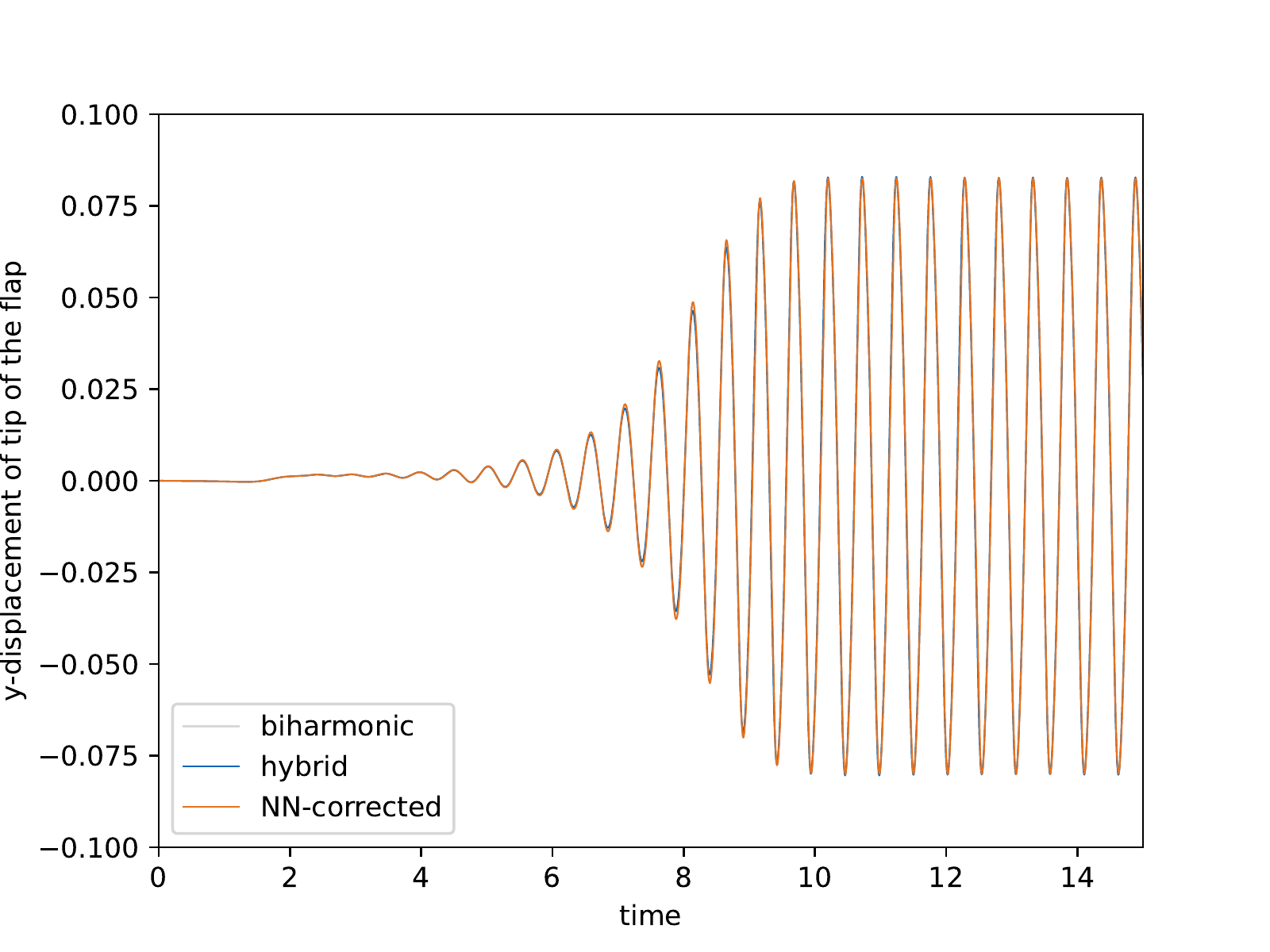}
    \includegraphics[width=0.48\textwidth]{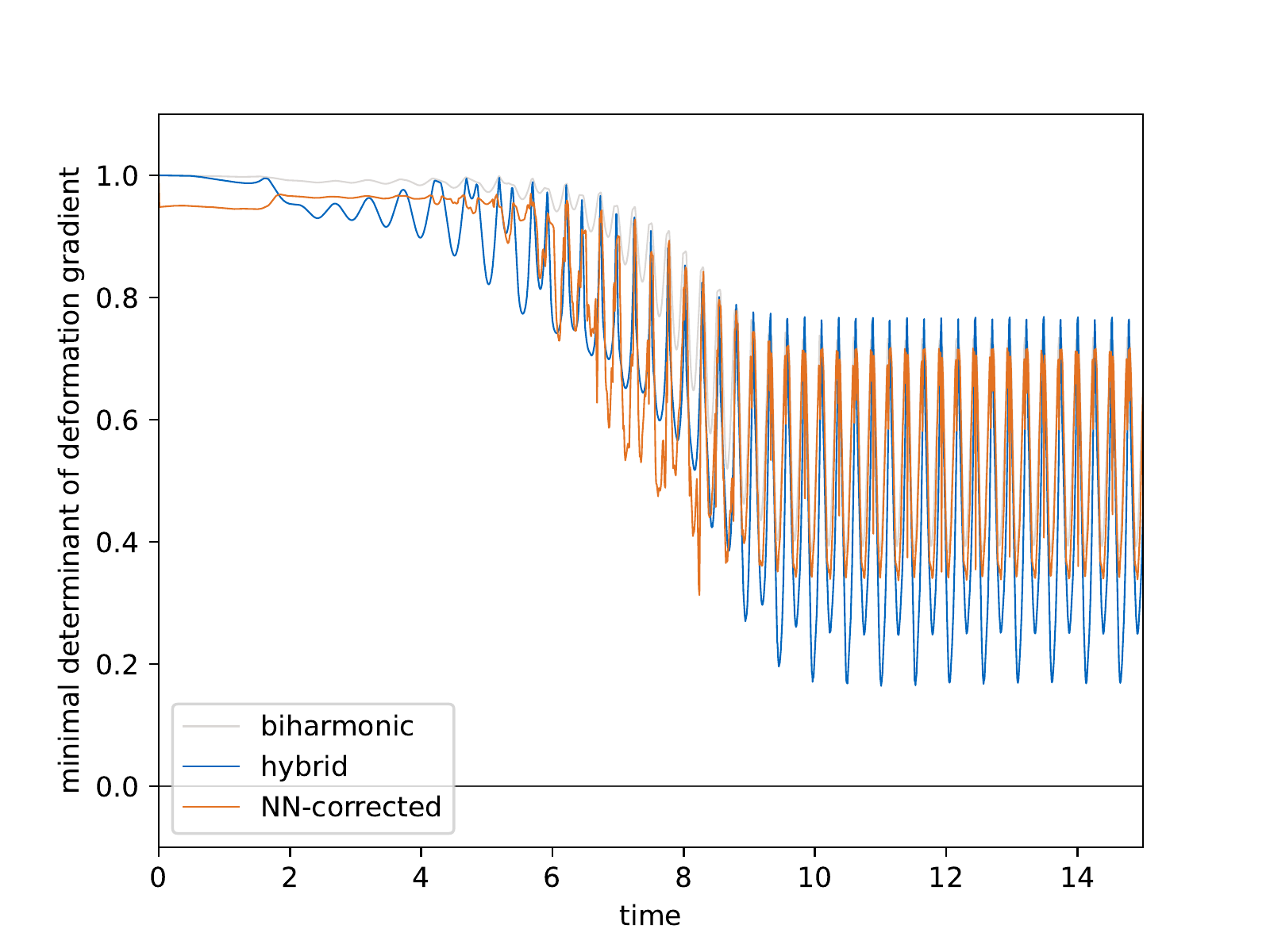}
     \vspace{-15pt}
    \caption{Numerical results for the NN-corrected approach compared to the biharmonic and hybrid PDE-NN approaches, trained on the FSI benchmark II dataset.}
    \label{Figure_torch_corrected_fsi_trained}
\end{figure}

\begin{figure}
    \includegraphics[width=0.48\textwidth]{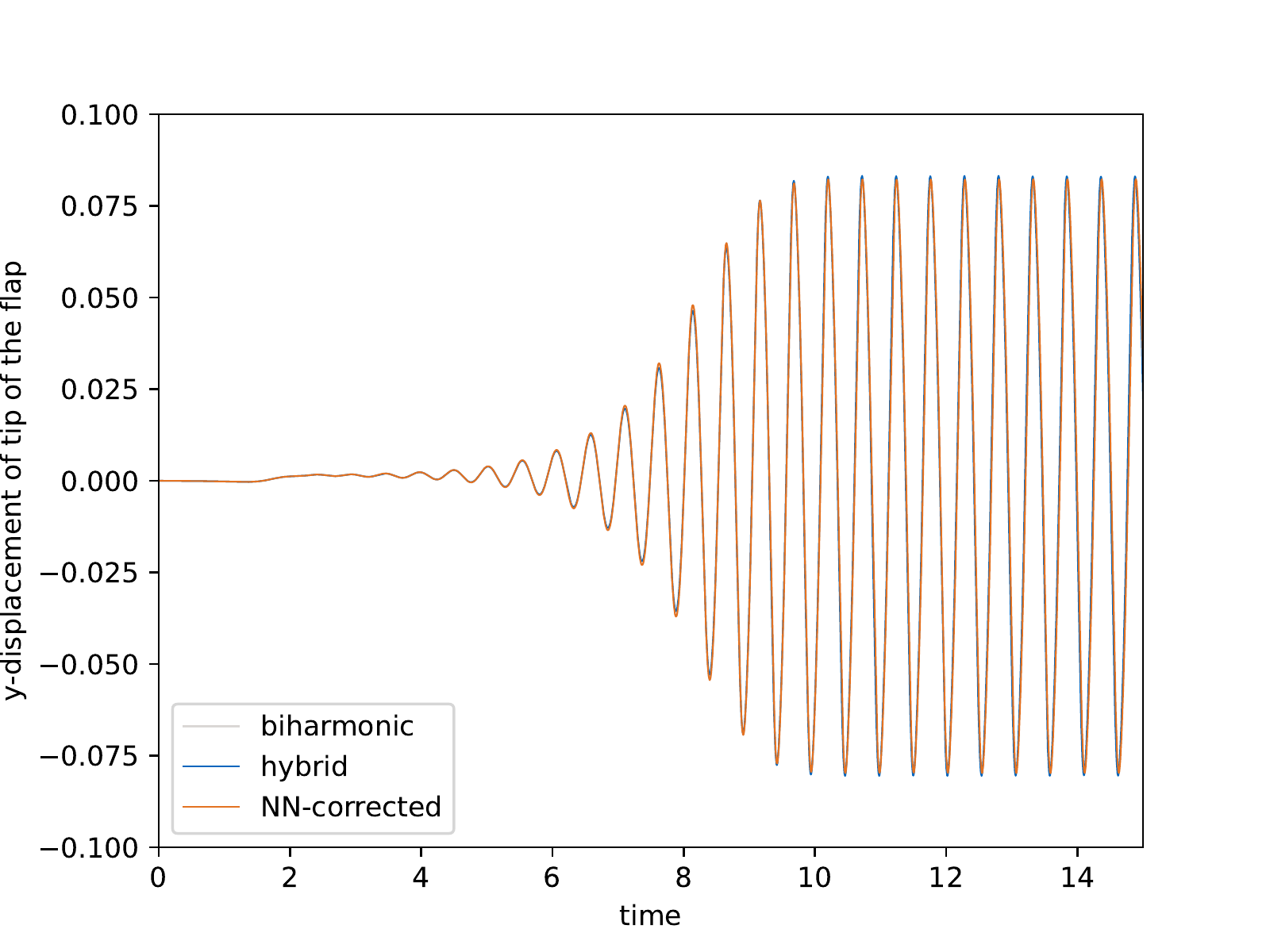}
    \includegraphics[width=0.48\textwidth]{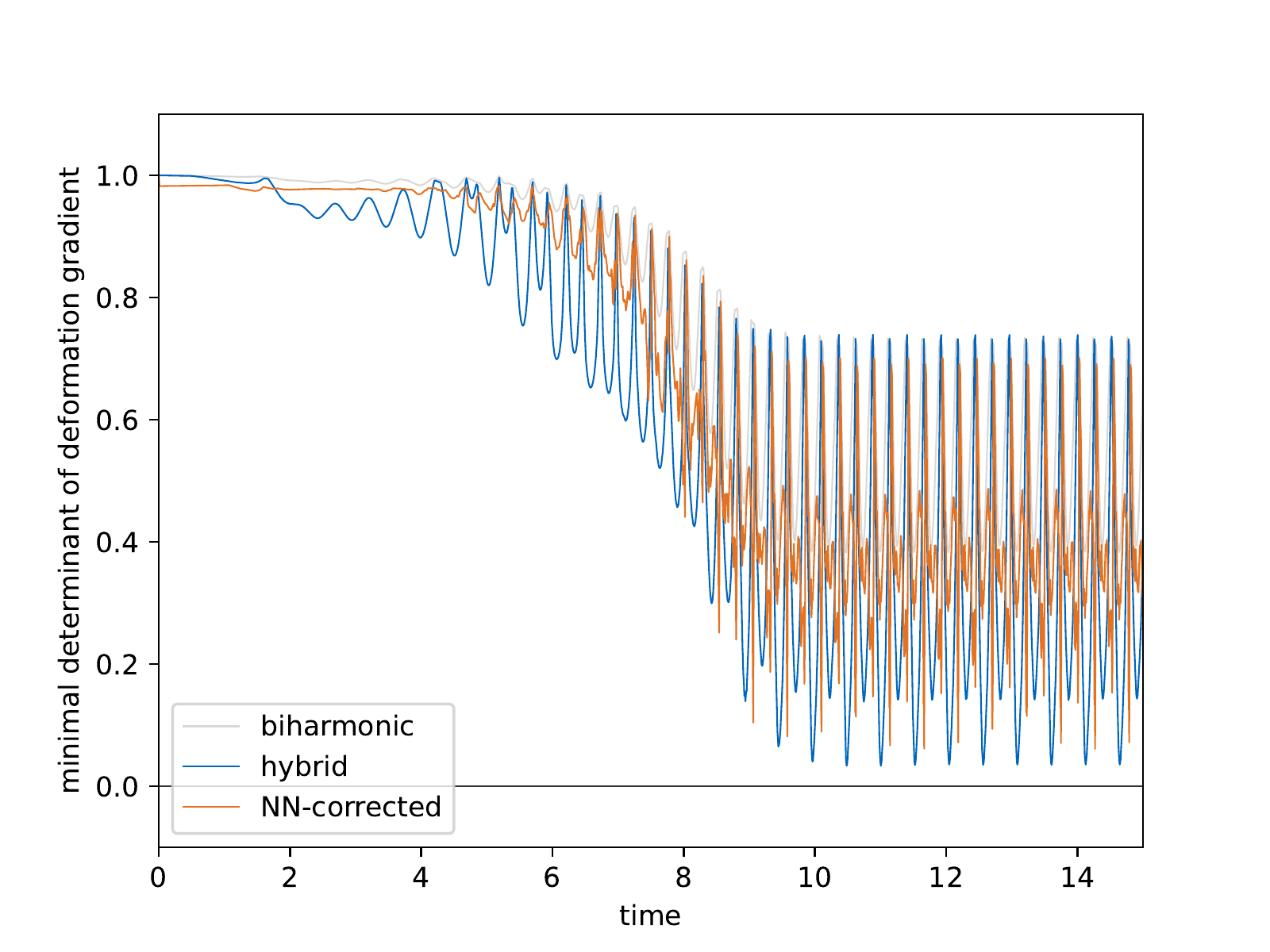}
      \vspace{-15pt}
    \caption{Numerical results for the NN-corrected approach compared to the biharmonic and hybrid PDE-NN approaches, trained on the artificial dataset.}
    \label{Figure_torch_corrected_artificial_trained}
\end{figure}

\begin{figure}
    \centering
    \includegraphics[width=0.85\textwidth]{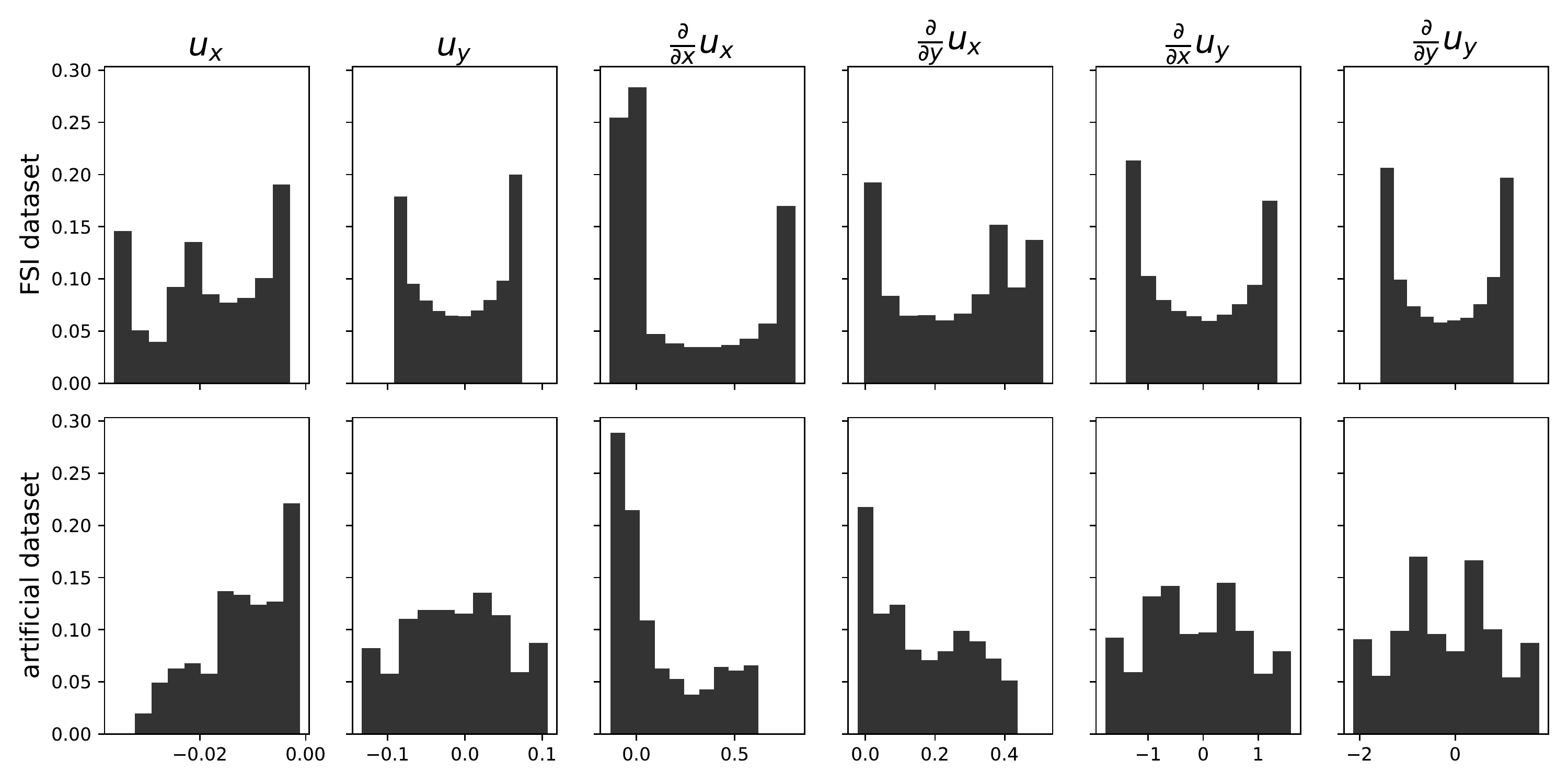}
    \vspace{-15pt}
    \caption{Histogram of neural network inputs at upper right corner of the flag in FSI and artificial datasets. Parts of the distributions for the FSI dataset are not represented in the artificial dataset, indicating that there are features of the deformations in the FSI simulation that are not included in the artificial dataset. }
    \label{Figure_artificial_data_hist_upper_right}
\end{figure}

\subsection{Parameter study}\label{sec:parameter_study}

We performed a parameter study for the neural network in the NN-corrected harmonic extension by analyzing the quality of meshes produced with varying random seeds and hyperparameter selections. The factors we varied are the number of hidden layers in the network and the choice of boundary condition-preserving function $l$ used.

To analyze the sensitivity of the network to random initialization, we trained 10 networks with equal hyperparameters as in section \ref{sec:corrected_harmonic_results}, using the random seeds 0-9. For the fully trained networks, we then computed the scaled Jacobian mesh quality indicator over the deformed fluid domains of the FSI test set. In Figure \ref{fig:random_paramstudy} we plot the median and quartiles over the 10 random initializations of the minimal cell value of scaled Jacobian mesh quality.
    
\begin{figure}
    \centering
    \includegraphics[width=0.98\linewidth]{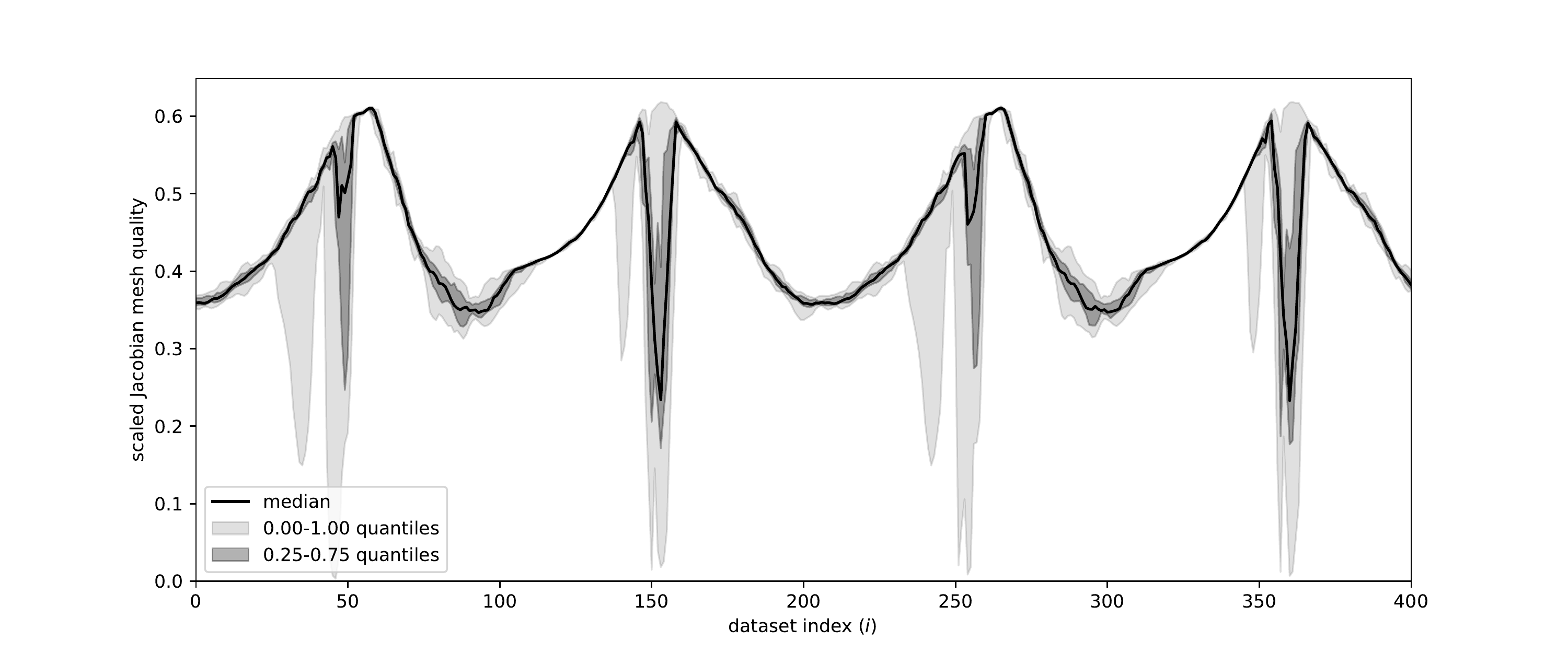}
    \caption{Quantiles of minimum over cells of scaled Jacobian mesh quality indicator over snapshots in test dataset for 10 trained networks with varying initializations and same hyperparameters as in FSI-study.}
    \label{fig:random_paramstudy}
\end{figure}

From Figure \ref{fig:random_paramstudy}, we can see that not all the trained networks produce quality mesh extensions. For some random initializations, the scaled Jacobian mesh quality approaches 0 at certain points of the periodic solid deformation. 

To judge the importance of the choice of boundary condition-preserving function $l$, we train 10 neural networks to correct the harmonic extension, but compute $l$ from \eqref{eq:mask_poisson} with $f \equiv 1$ 
instead of using the right hand side \eqref{eq:mask_rhs_hand_tuned}. Other than the choice of $l$, the network architecture and training parameters are the same as those used in section \ref{sec:corrected_harmonic_results}. We compute the scaled Jacobian mesh quality indicator's minimum over the mesh elements for the snapshots in the FSI test set and show the median and quantiles over the 10 random initializations in Figure \ref{fig:mask_paramstudy}.

\begin{figure}
    \centering
    \includegraphics[width=0.98\linewidth]{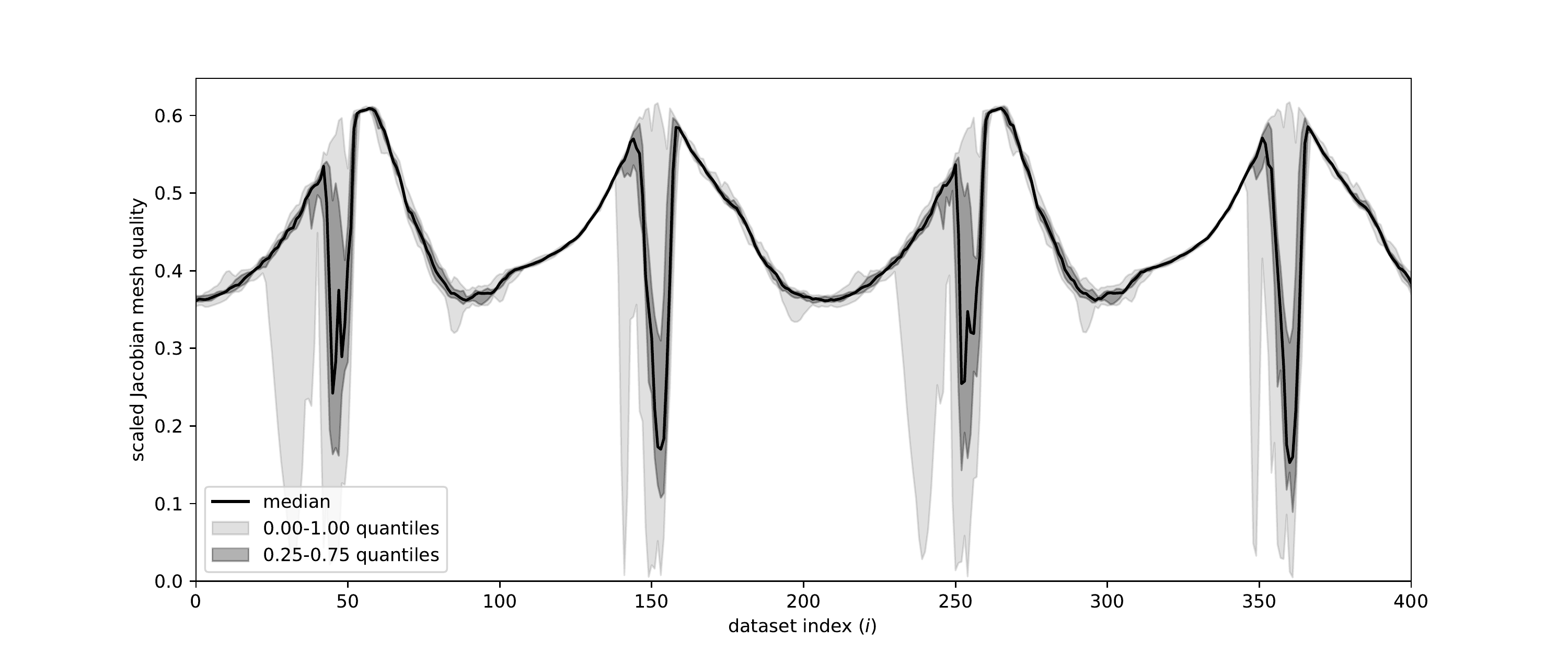}
    \caption{Quantiles of minimum over cells of scaled Jacobian mesh quality indicator over snapshots in test dataset for 10 trained networks with varying initializations and same hyperparameters as in FSI-study. The boundary condition-preserving function $l$ is computed from \eqref{eq:mask_poisson} with $f \equiv 1$. 
    }
    \label{fig:mask_paramstudy}
\end{figure}

The results shown in Figure \ref{fig:mask_paramstudy} are similar to those in Figure \ref{fig:random_paramstudy}, but with lower quality extensions in some areas. 
These results indicate that NN-corrected harmonic extension can be successful with $l$ that are not hand-tuned for the specific task at hand, but the use of hand-tuned $l$ can be beneficial.

To judge the impact of the number of hidden layers in the neural network correction, we analyzed 10 random initializations of networks with 2, 3, 4, and 5 hidden layers and similar complexity. In these experiments, the width of the network was always chosen such that the total number of parameters was as close as possible to the number of parameters in the architecture used in section \ref{sec:corrected_harmonic_results}. This was done by adjusting the width of the networks. The architectures used are summarized in table \ref{tab:param_study_param_count}.

\begin{table}[]
    \centering
    \begin{tabular}{c|c c c c c}
        Depth & 2 & 3 & 4 & 5 & 6 \\
        Width & 284 & 202 & 165 & 143 & 128 \\
        \# Parameters & 84066 & 84236 & 83987 & 83943 & 83970
    \end{tabular}
    \caption{Architectures and parameter counts used for analysis of impact of depth in neural network-correction of harmonic extension.}
    \label{tab:param_study_param_count}
\end{table}

Using the neural network architectures in table \ref{tab:param_study_param_count}, we run the training process from section \ref{sec:corrected_harmonic_results} with 10 different random initializations for each architecture. We compute the scaled Jacobian mesh quality measure's minimum value over all mesh elements over the FSI test set and report the median and quantiles over the random initializations in Figure \ref{fig:depth_paramstudy}.

\begin{figure}
    \centering
    \includegraphics[width=0.98\linewidth]{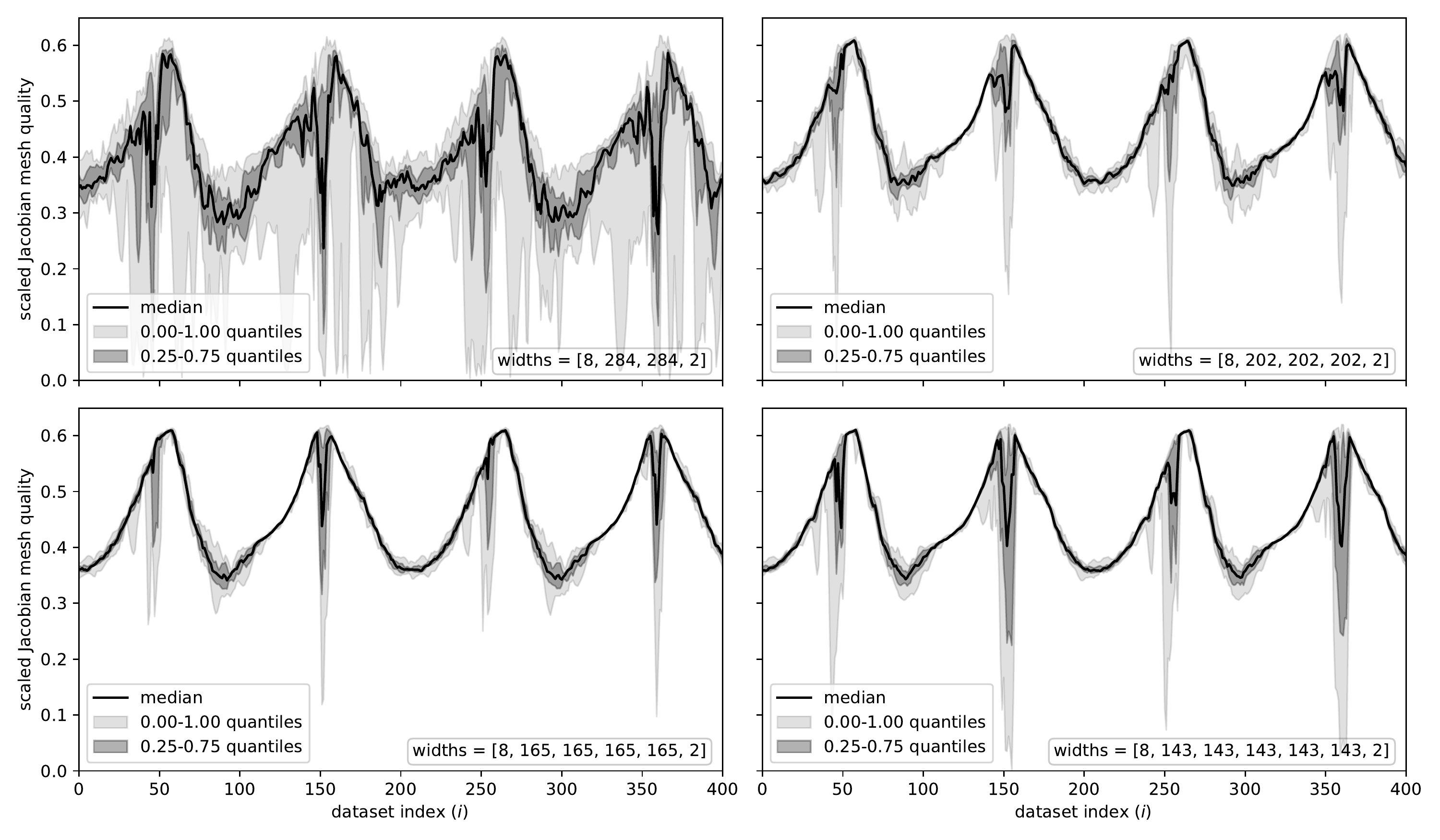}
    \caption{
    Quantiles of minimum over cells of scaled Jacobian mesh quality indicator over snapshots in test dataset for 10 trained networks with varying initializations for a varying number of hidden layers. Other than width and depth of neural networks, the same hyperparameters are used as in section \ref{sec:corrected_harmonic_results} and the total number of parameters is as close as possible to the number of parameters used in section \ref{sec:corrected_harmonic_results}.
    }
    \label{fig:depth_paramstudy}
\end{figure}

The results in Figure \ref{fig:depth_paramstudy} indicate that the NN-corrected harmonic extension requires a sufficiently deep network to learn a stable and high-quality mesh extension.

Judging only by the scaled Jacobian mesh quality indicator, a shallower network of for instance 3 hidden layers can produce satisfactory results. However, we found that deeper networks tend to produce smoother mesh motions and chose to use 6 hidden layers in our network. We show how the smoothness of the mesh motion increases with depth in Figure \ref{fig:shimmer_paramstudy}, where we plot the vertical displacement over the FSI benchmark problem II test set of two vertices located at the tip of the deformable solid. 

\begin{figure}
    \centering
    \includegraphics[width=0.9\linewidth]{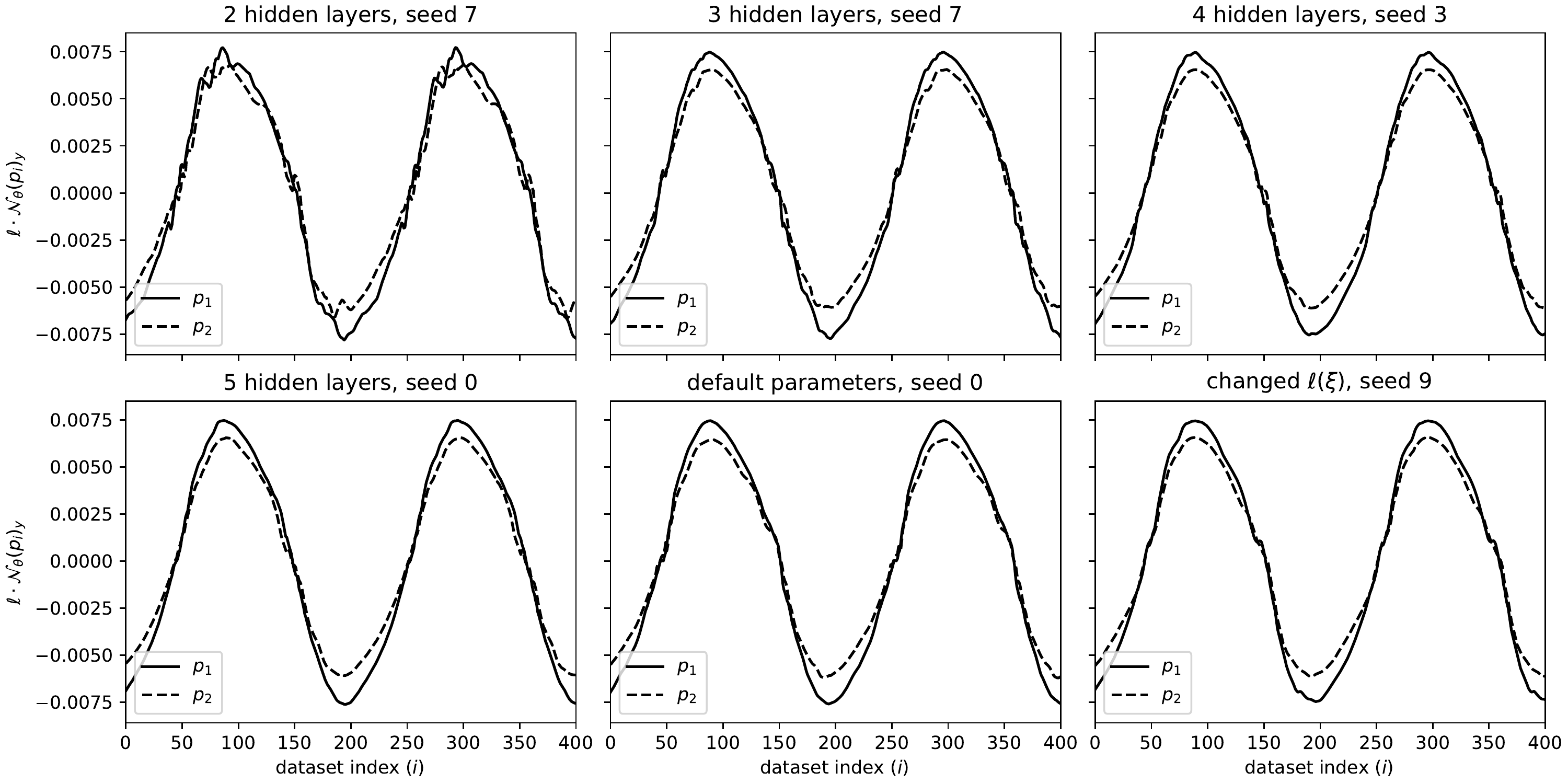}
    \caption{The $y$-component of the NN-correction $l \cdot \mathcal{N}_\theta$ evaluated at two selected mesh vertices over the FSI test set. The selected networks $\mathcal{N}_\theta$ are the best performing by visual inspection for 2, 3, 4, and 5 hidden layers, the default parameters of section \ref{sec:corrected_harmonic_results}, and with boundary condition-preserving $l$ chosen by $-\Delta l \equiv 1$. The selected vertices are the two non-boundary vertices connected to the vertex at the center of the tip of the deformable solid, at $p_1 = (0.60433, 0.2025)$ and $p_2 = (0.60363, 0.1975)$.}
    \label{fig:shimmer_paramstudy}
\end{figure}

\subsection{Generalizability}\label{sec:generalizability_comp_cost}

In order to assess the generalizability of the extension operators, we apply the extension operators to larger deformations (section \ref{subsec:gravity}) and for a different geometry (section \ref{subsec:membrane}). 

\subsubsection{Gravity-driven deformation test}
\label{subsec:gravity}

We analyze the quality of transformed meshes obtained by extending solid boundary deformations produced from an elasticity problem \cite[section 4]{Shamanskiy2020}. Using the geometry of the FSI benchmark problem II, we apply a uniform gravitational load $(0, f_\mathrm{grav})$ to the solid, with homogeneous Dirichlet boundary conditions on the cylinder boundary and zero-traction boundary conditions elsewhere. We then compute the transient displacement of the solid according to the St. Venant-Kirchoff model \eqref{eq:stvk_material}, with the same material constants as in the FSI benchmark problem II. The elasticity problem is discretized using implicit Euler time-stepping and second order Lagrange elements in space.

For three different values of $f_\mathrm{grav} \in \lbrace 1.0, 2.0, 2.5 \rbrace$, we simulate the system until the solid first reaches its maximum deformation in the $y$-direction. These maximal solid deformations are then extended to the fluid domain to evaluate the quality of the meshes produced by the different mesh motion techniques. The maximal value of $f_\mathrm{grav} = 2.5$ was chosen as this, for our set-up and discretization, results in deformations close to the limit of biharmonic mesh motion.

Similar to \cite{stein2004automatic, Wick, Shamanskiy2020}, loss of bijectivity of the ALE-mapping is identified by sampling $J = \mathrm{det}(I + \nabla F)$ at the degree of freedom locations of a sixth order discontinuous Galerkin space. If a negative value of $J$ is found in an element, we change the sign of the measured mesh quality indicator for that cell to be negative. Thus, if a deformed mesh contains negative mesh qualities in our experiments, the mesh is degenerate.

Figure \ref{fig:grav_test_histograms} shows histograms of the scaled Jacobian mesh quality indicator, with negative sign for degenerate cells, for the three test deformations with the biharmonic extension, the harmonic extension, the hybrid PDE-NN extension trained on FSI-data and artificial data, and the NN-corrected harmonic extension trained on FSI-data and artificial data.

\begin{figure}
    \centering
    \includegraphics[width=0.98\linewidth]{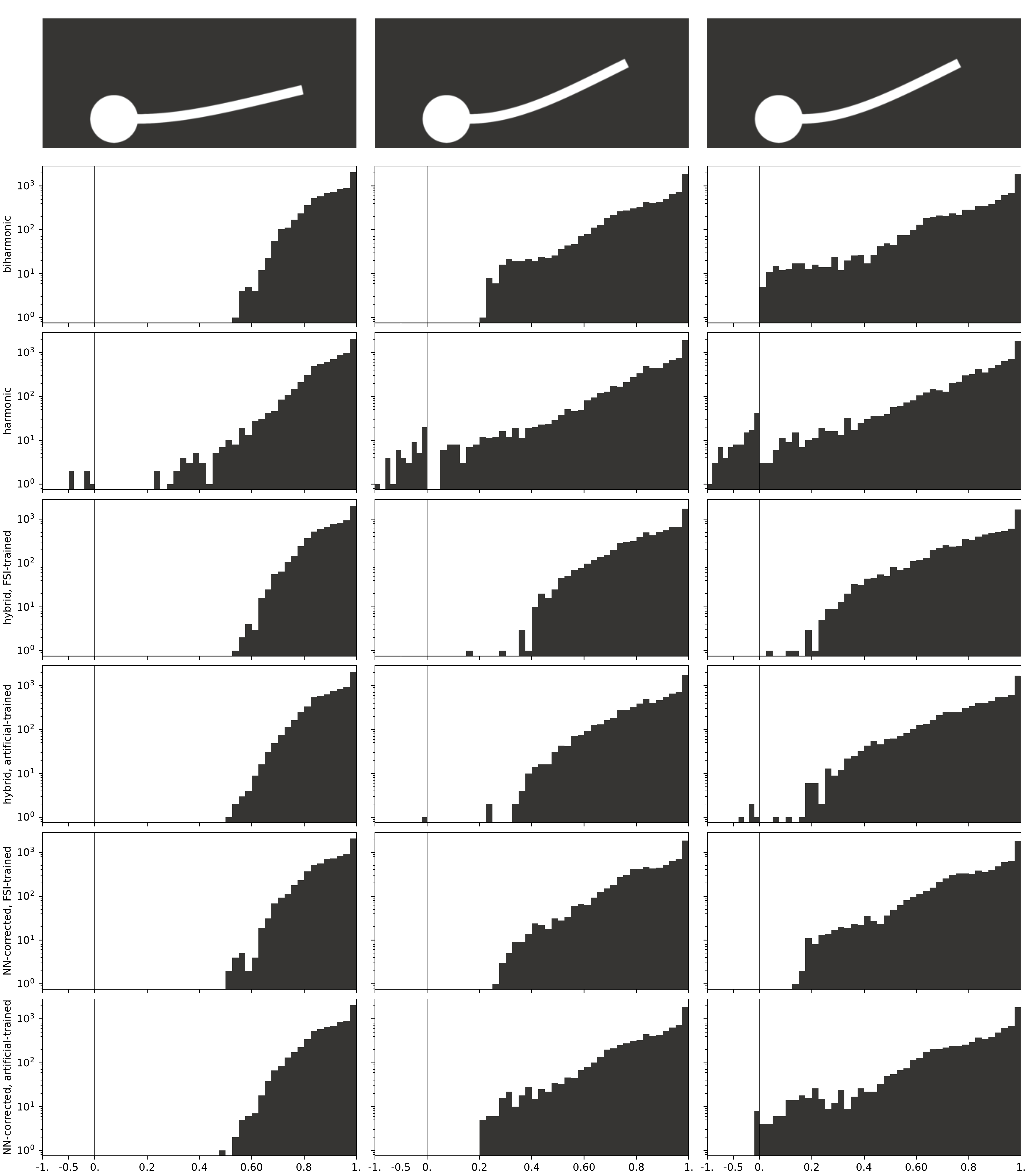}
    \caption{Mesh quality histograms for the biharmonic, harmonic, hybrid and NN-corrected extension operators on three different deformations of the gravity-driven deformation test (depicted in the first row)}
    \label{fig:grav_test_histograms}
\end{figure}

The setup of this test problem is similar to the FSI benchmark problem II and Figure \ref{fig:grav_test_histograms} shows that the learned extensions are able to produce meshes of comparable quality to the biharmonic extension. The extensions trained on FSI data perform better than the ones trained on the artificial dataset. The artificial-trained hybrid PDE-NN extension degenerates on the two largest deformations and the artificial trained NN-corrected harmonic extension degenerates on the largest deformation. The other learned extensions are non-degenerate in that the mesh quality is bounded away from zero and the determinant of the deformation gradient is positive.

\subsubsection{Membrane on fluid test}
\label{subsec:membrane}

In order to test the generalizability properties of the different extension operators (trained on the FSI benchmark II dataset) onto different geometries, we consider the membrane fluid test presented in \cite[Fig. 7, Sec. 4.2]{Wick}. We generate another data set by running an FSI simulation with biharmonic extension operator, IMR type material and parameters $\rho_f = 10^3$, $\nu_f = 4 \cdot 10^{-3}$, $\rho_s = 8 \cdot 10^2$, $\mu_s = 2 \cdot 10^7$ and $\mu_2 = 1 \cdot 10^5$. We apply the different extension operators to the three snapshots of the simulation depicted in the first line of Figure \ref{fig:membrane_test_histograms}. 

Applying the trained hybrid PDE-NN extension to the new geometry is straight forward, since it includes no specifics about the geometry it is used for. 
In contrast, for the neural network-corrected harmonic extension, one has to compute a new boundary condition-preserving function $l$. Switching $l$ between training and testing amounts to a change in network parameters between training and testing, with which it is not reasonable to expect the network to succeed. Also, since the position vector is an input to the network, the network is specific to the training geometry and transferability cannot be expected.

Figure \ref{fig:membrane_test_histograms} shows histograms of the scaled Jacobian mesh quality indicator, with negative sign for degenerate cells, for the three different snapshots of the membrane FSI problem with the biharmonic extension, the harmonic extension, and the hybrid PDE-NN extension trained on FSI-data and artificial data. The sign for degenerate cells is determined in the same way as for the gravity-driven motion test.

\begin{figure}
    \centering
    \includegraphics[width=0.98\linewidth]{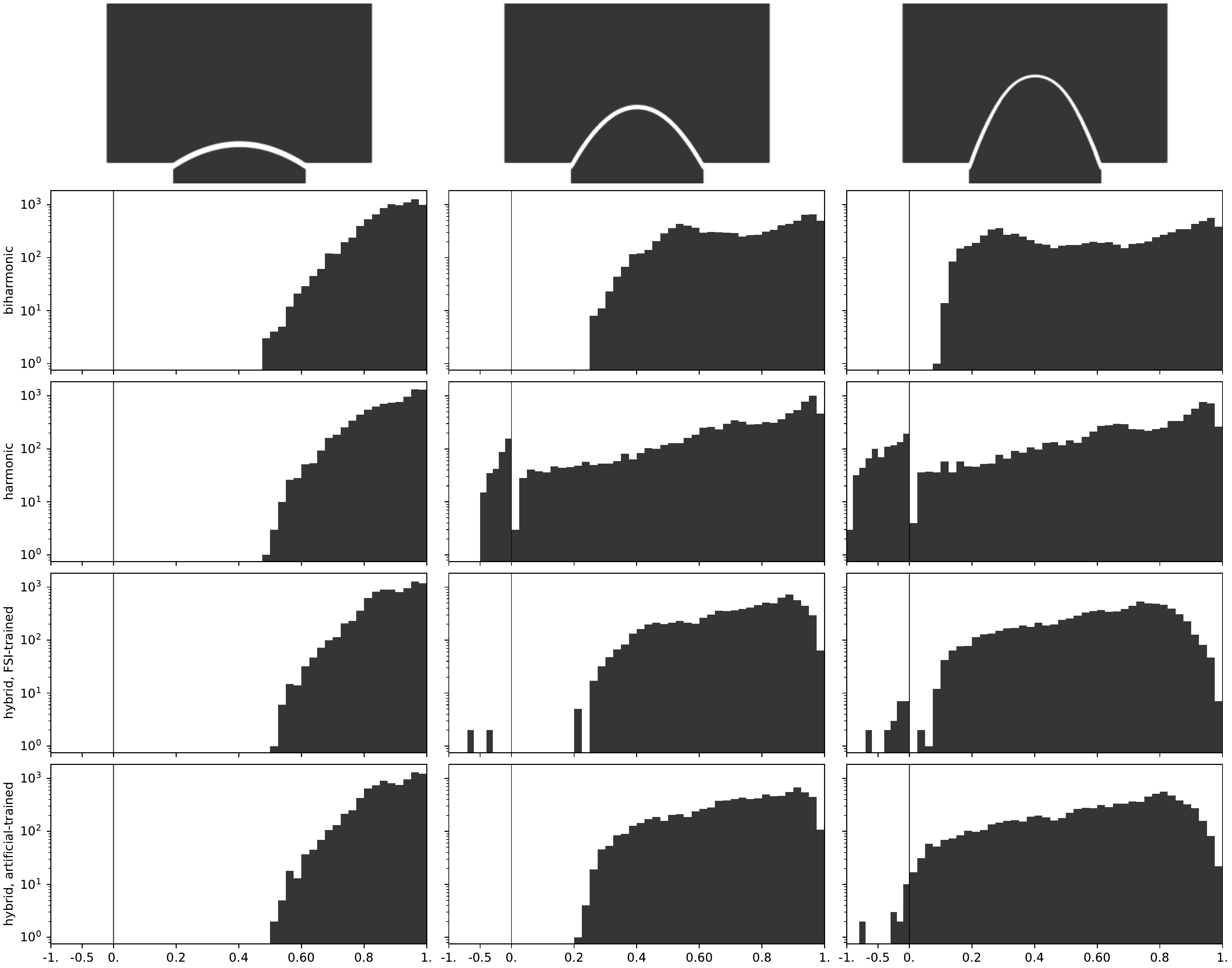}
    \caption{Mesh quality histograms for the biharmonic, harmonic, and hybrid PDE-NN extension operators on three different deformations of the membrane on fluid test (depicted in the first row)}
    \label{fig:membrane_test_histograms}
\end{figure}

From the results shown in Figure \ref{fig:membrane_test_histograms}, we can conclude that the hybrid PDE-NN extensions are to some extent transferable between geometries.

\subsection{Runtime comparison for FSI benchmark II}\label{sec:runtime_comparison}

\label{subsec:timings}

Figures \ref{fig:timings_plot}, \ref{fig:timings_plot_2} present the average computational time of the different extension operators on the first 40 snapshots of the FSI benchmark II data set split into assembly, linear solves, neural network correction and rest (e.g. mesh moving) for three different refinement levels (with 3935, 15300 and 60320 vertices). While the runtime for the NN correction just slightly increases the runtime of the harmonic extension, the hybrid PDE-NN variants increase the runtime significantly. Qualitatively, the plots for the incremental linearized variants are similar to the results presented in \cite[Fig. 7]{Shamanskiy2020}. The assembly and factorization of the matrices for the harmonic and biharmonic extension only needs to be done once while initializing the extension operators. For the (linearized versions of the) hybrid PDE-NN approach, the matrices change and the assembly needs to be done several times. The computational time for the training process is not reflected in these plots. The training takes approximately two hours and 18 minutes for the hybrid PDE-NN approach and approximately 16 minutes for the NN-corrected harmonic approach. The hybrid PDE-NN training can be significantly sped up if the PDEs for each function and gradient evaluation are computed in parallel, instead of sequentially solving for each training data point. 

\begin{figure}
\centering
    \includegraphics[width=0.45\linewidth]{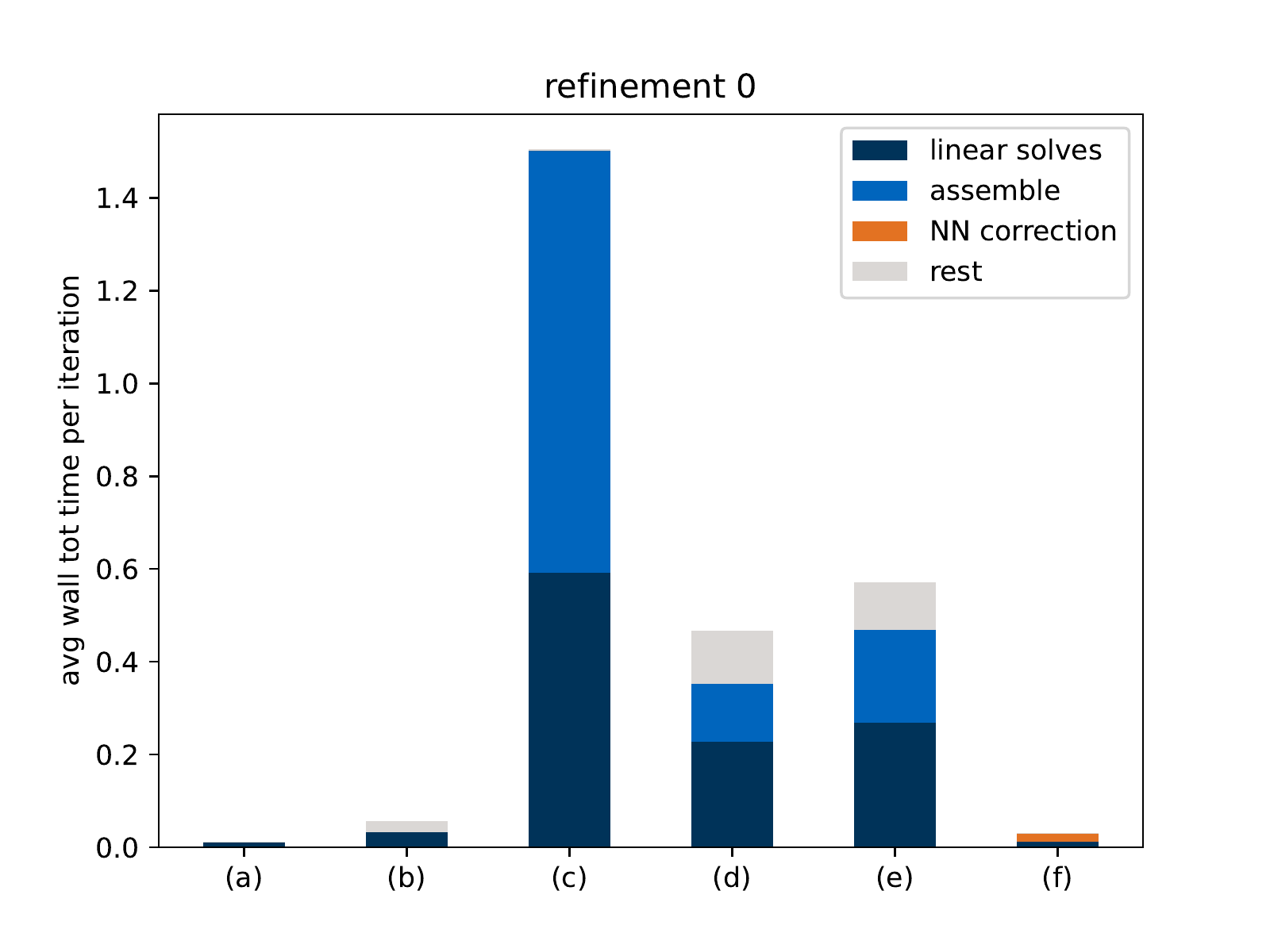} \hfill
    \includegraphics[width=0.45\linewidth]{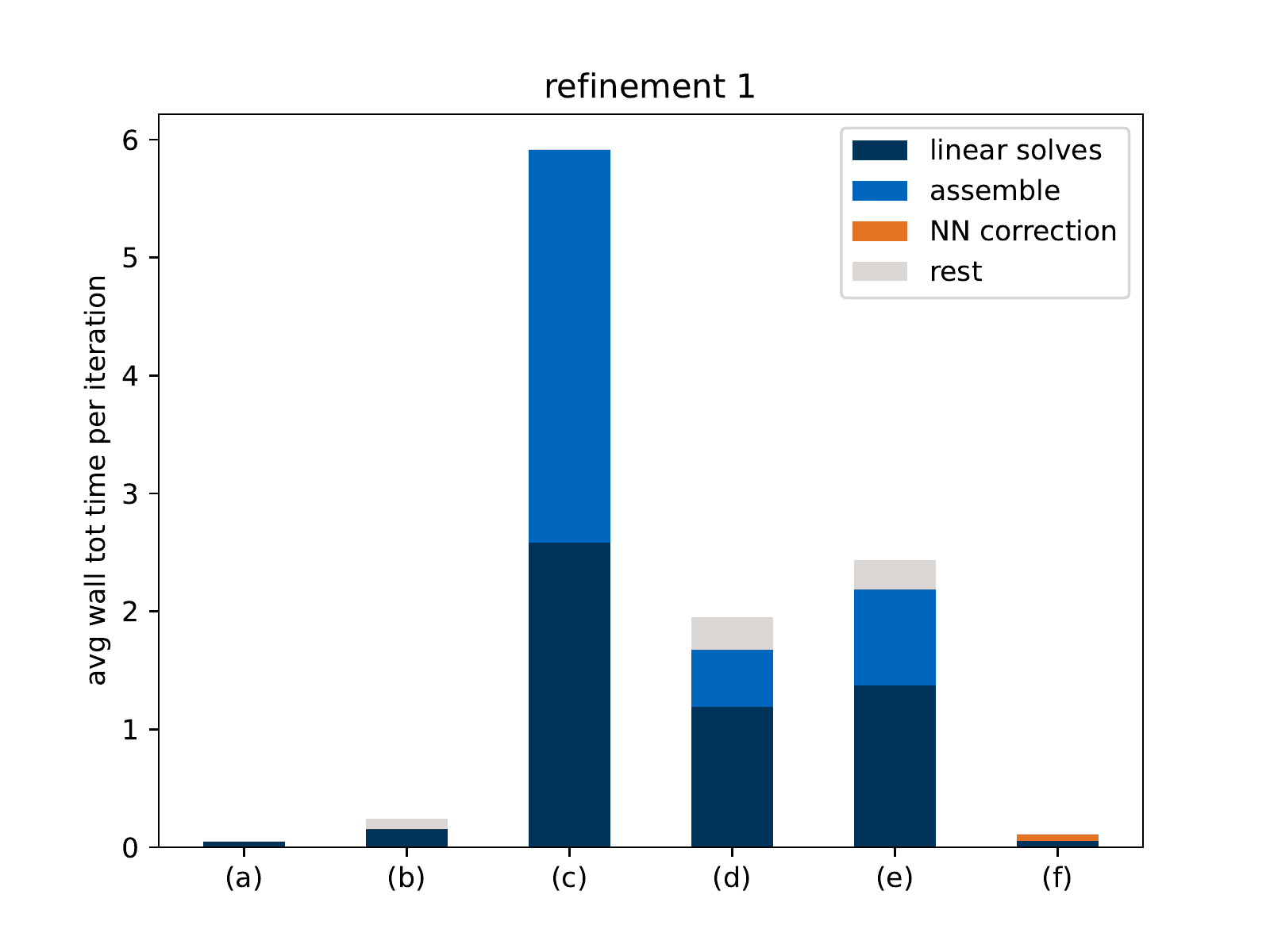} \hfill
    \includegraphics[width=0.45\linewidth]{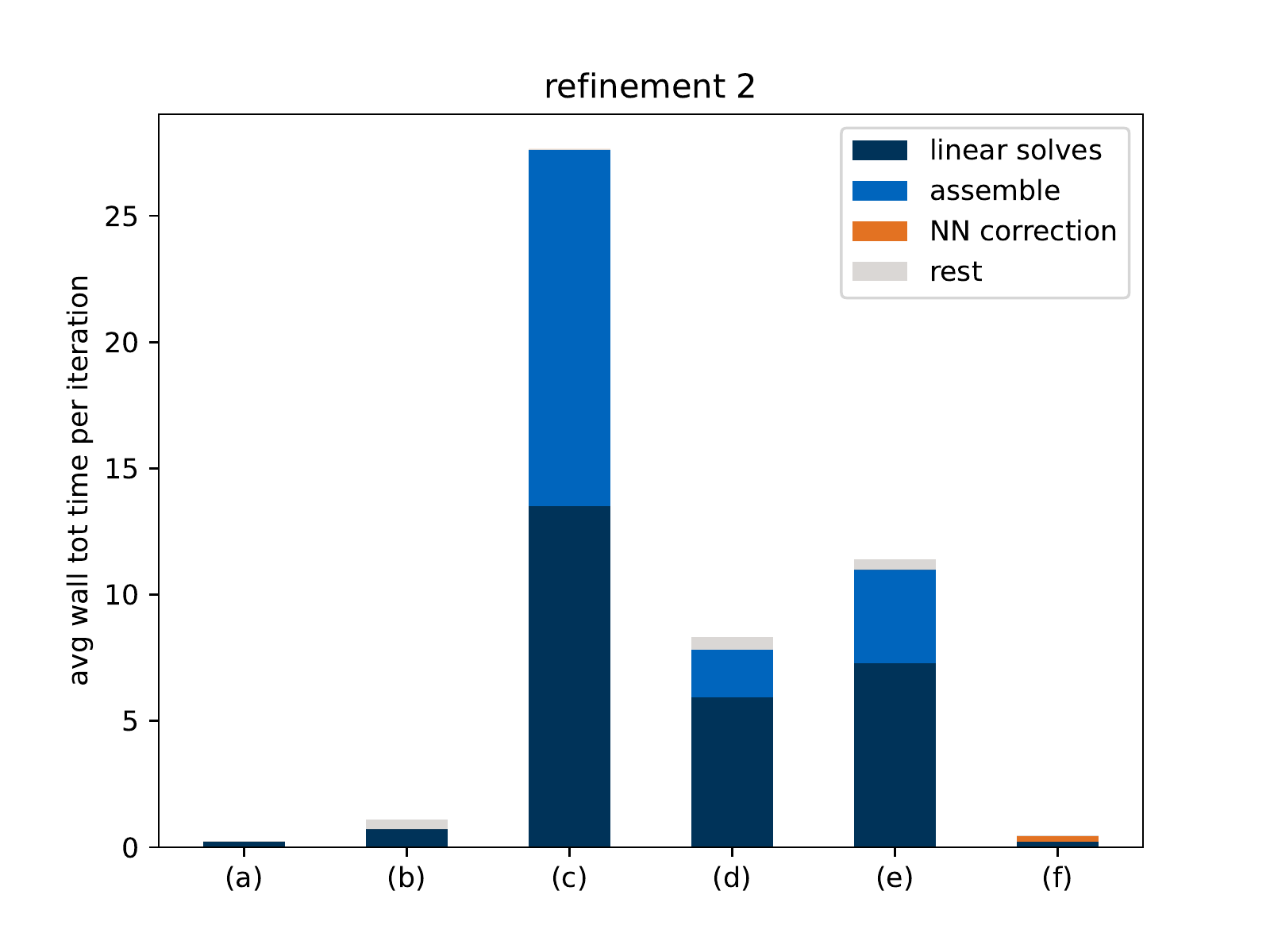} \hfill
    \caption{Timings for (a) harmonic extension, (b) biharmonic extension, (c) learned extension, (d) incremental learned extension, (e) incremental corrected learned extension, (f) NN-corrected extension on three different refinement levels. }
    \label{fig:timings_plot}
\end{figure}

\begin{figure}
\centering
    \includegraphics[width=0.45\linewidth]{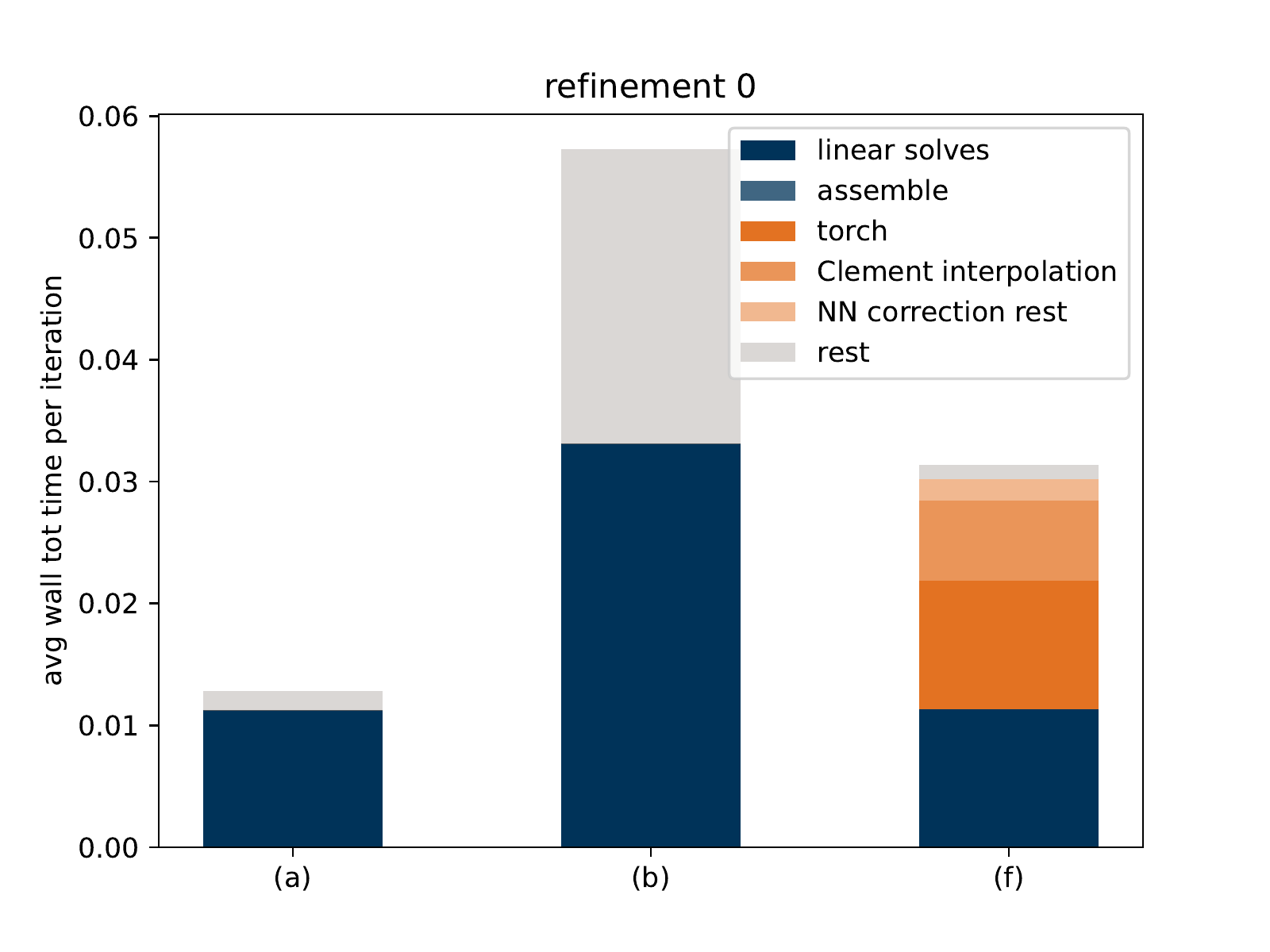} \hfill
    \includegraphics[width=0.45\linewidth]{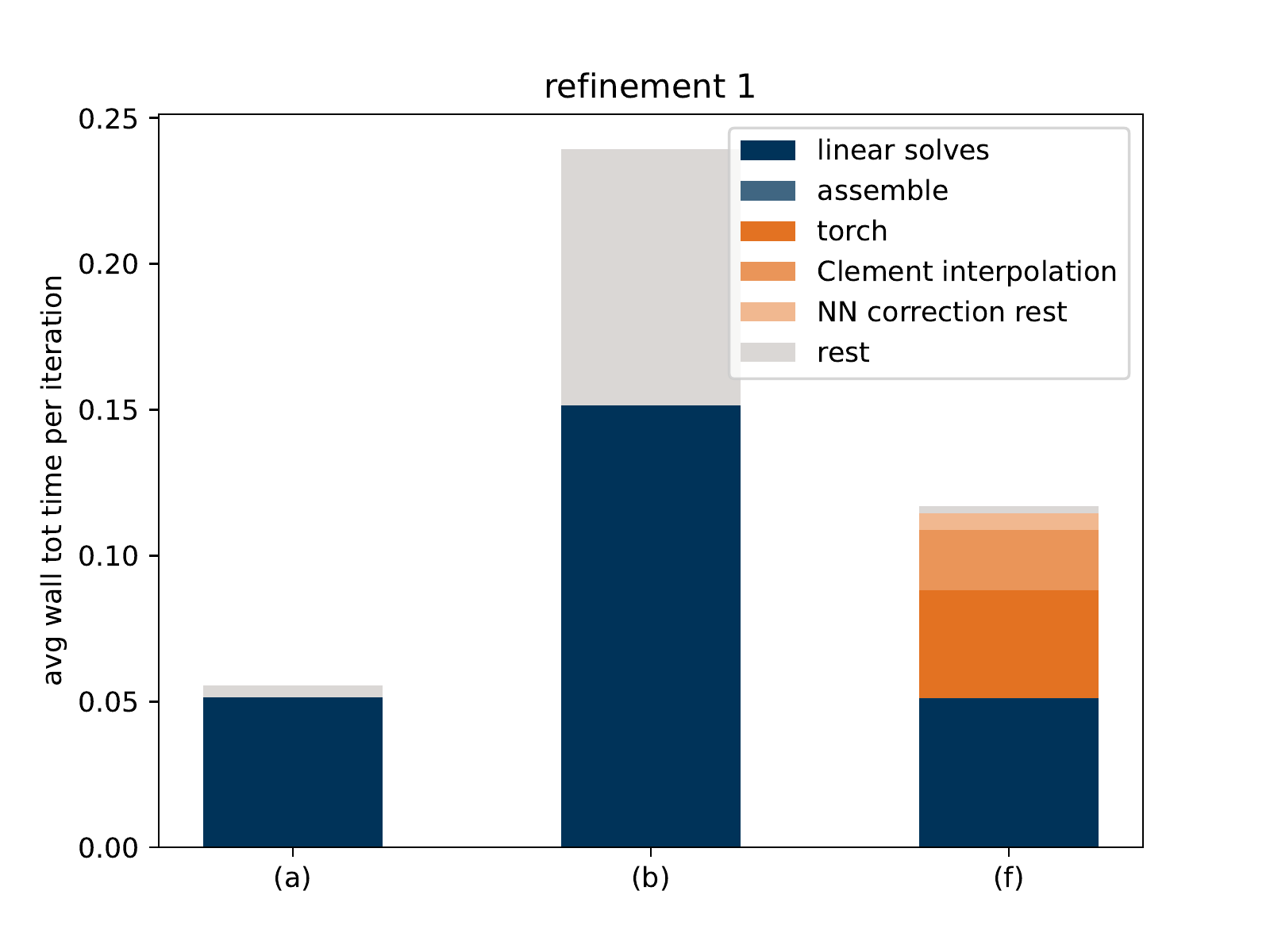} \hfill
    \includegraphics[width=0.45\linewidth]{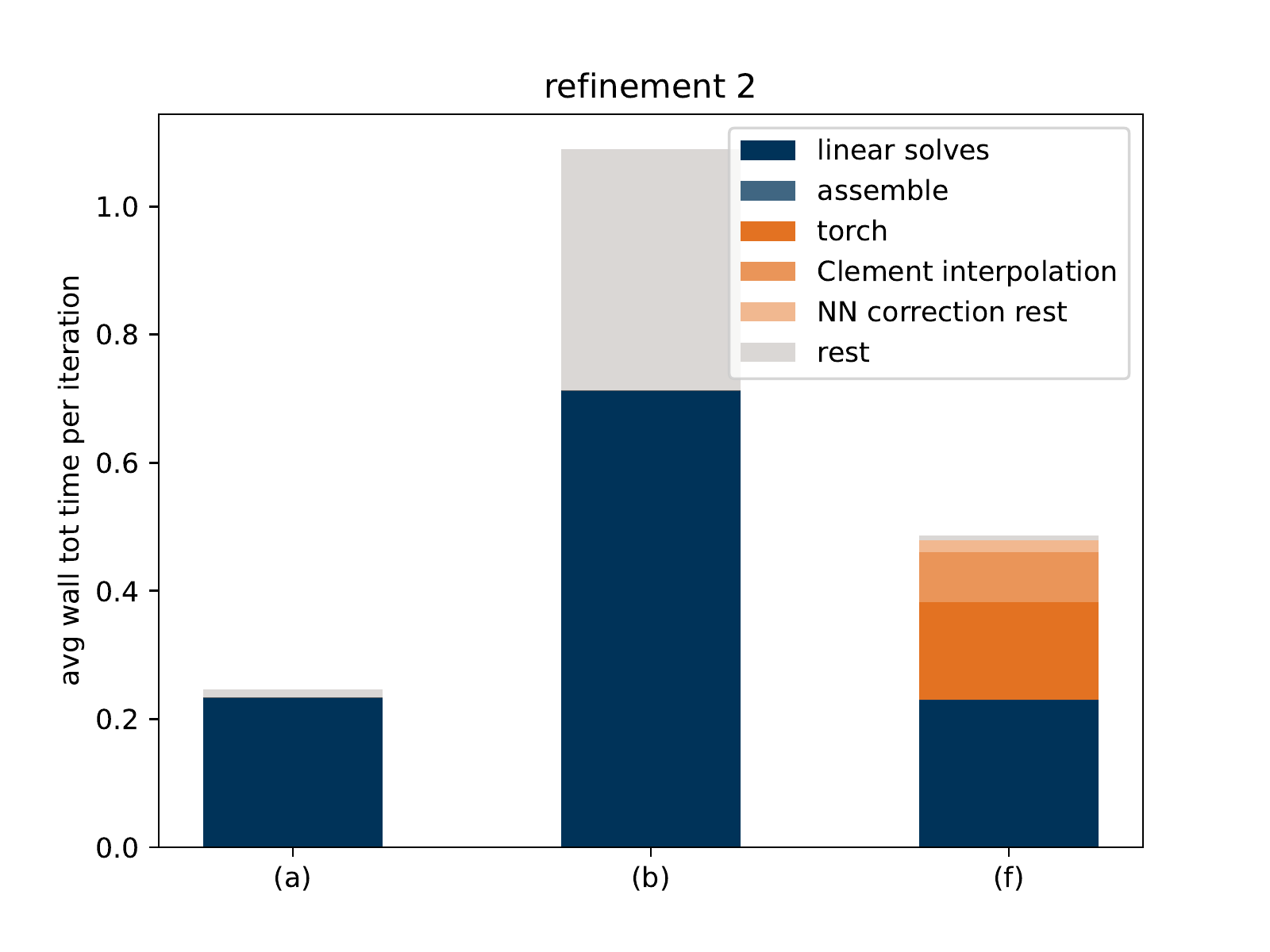} \hfill
    \caption{Timings for (a) harmonic extension, (b) biharmonic extension, (f) NN-corrected extension and three different refinement levels (zoom of Figure \ref{fig:timings_plot}).}
    \label{fig:timings_plot_2}
\end{figure}

\section{Conclusion}

We presented two approaches to learn an operator that extends boundary deformation to the interior of the domain. We chose a supervised learning approach such that the operators approximate the solution of the biharmonic equation. We demonstrated that the learned operators have the potential to serve as mesh motion technique for FSI simulations. So far, we considered supervised learning approaches. For future work, e.g. in cases where the biharmonic extension does not give satisfactory results, it is also interesting to consider unsupervised approaches, where one tries to optimize a measure for the quality of the deformed mesh. Moreover, we worked with approaches that require the solution of a partial differential equation. Considering approaches that do not require PDE solves is another interesting direction for future research.

\bibliographystyle{plain}
\bibliography{bib}

\appendix

\section{Counterexample}
\label{appendix}

We consider a shallow input convex neural network of the form 
\begin{align}
    f_\theta(x) = \sum_{i=1}^N c_i \rho(\tilde w_i \cdot x + \tilde b_i),
\end{align}
where $c_i\geq 0$. If we work with $\rho = ReLU$, $f_\theta$ can be defined as
\begin{align}
    f_\theta(x) = \sum_{i=1}^N \rho(w_i \cdot x + b_i),
    \label{func_form}
\end{align}
where $w_i = c_i \tilde w_i$ and $b_i = c_i \tilde b_i$. 
While every convex function $h: \mathbb R \to \mathbb R$ that is bounded from below can be approximated by a function of the form \eqref{func_form}, this is not the case in general, e.g., not every convex function $h: \mathbb R^2 \to \mathbb R$ that is bounded from below can be approximated. 

In the following, we prove that we can not approximate the function
\begin{align}
    h: \, (x, y) \mapsto \max(\max(x + y, 0), \max(x -y , 0))
    \label{def_h}
\end{align}
with a function of the form \eqref{func_form} (even for arbitrary large $N$). We start by showing that the function $h$ can not be represented exactly by \eqref{func_form}. To do so, we use the following lemma. 

\begin{lemma}
Let $d \in \mathbb N$, $N \in \mathbb N$, $\lbrace a_i, b_i \rbrace_{i \in \lbrace 1, \ldots, N \rbrace}$ be such that $a_i \in \mathbb R^d$, $b_i \in \mathbb R$ for all $i \in \lbrace 1, \ldots, N\rbrace$. Let $f(x) := \sum_{i=1}^N \rho(a_i^\top x + b_i)$ with $\rho(s) = ReLU(s)$. Let $x_1, x_2 \in \mathbb R^d$ be such that $f$ is differentiable at $x_1$ and $x_2$, and $\mathcal A(x_1) \cap \mathcal A(x_2) = \emptyset$, where $$\mathcal A (x) = \lbrace j \in \lbrace 1, \ldots, N \rbrace ~: ~ a_j^\top x + b_j > 0 \rbrace.$$ Then, \begin{align}
    \tilde f(x):= \sum_{j=1}^2 \rho( f^\prime(x_j)^\top x + \sum_{i\in \mathcal A(x_j)} b_i)
    \label{definition_A}
\end{align}
fulfills 
\begin{align}
   f(x_j) = \tilde f(x_j)\text{ for } j \in \lbrace 1, 2 \rbrace \text{ and } \tilde f(x) \leq f(x)
   \label{fapprox}
\end{align}
for all $x \in \mathbb R^d$.
\label{lemma_1}
\end{lemma}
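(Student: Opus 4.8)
The plan is to unpack the value and gradient of $f$ at the two differentiability points, then verify the interpolation equalities, and finally establish the global bound $\tilde f \leq f$ using subadditivity of the ReLU together with the disjointness hypothesis.

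First I would record the local structure of $f$ at $x_j$. Since $f$ is differentiable at $x_j$, none of the affine pieces vanishes there, so only the active indices contribute: writing $B_j := \sum_{i \in \mathcal{A}(x_j)} b_i$, one has $f'(x_j) = \sum_{i \in \mathcal{A}(x_j)} a_i$ and $f(x_j) = \sum_{i \in \mathcal{A}(x_j)}(a_i^\top x_j + b_i) = f'(x_j)^\top x_j + B_j \geq 0$. This identifies the argument inside each $\rho$ in the definition \eqref{definition_A} of $\tilde f$ with a sum of affine pieces over the corresponding active set, namely $f'(x_j)^\top x + B_j = \sum_{i \in \mathcal{A}(x_j)}(a_i^\top x + b_i)$.

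To prove the equalities $f(x_j) = \tilde f(x_j)$, I would evaluate the two summands of $\tilde f$ at $x_j$ separately. The $j$-th summand is $\rho(f'(x_j)^\top x_j + B_j) = \rho(f(x_j)) = f(x_j)$, using $f(x_j) \geq 0$. For the other summand, with index $j' \neq j$, the disjointness $\mathcal{A}(x_1) \cap \mathcal{A}(x_2) = \emptyset$ guarantees that every $i \in \mathcal{A}(x_{j'})$ is inactive at $x_j$, hence $a_i^\top x_j + b_i \leq 0$; summing gives $f'(x_{j'})^\top x_j + B_{j'} = \sum_{i \in \mathcal{A}(x_{j'})}(a_i^\top x_j + b_i) \leq 0$, so this summand is $\rho$ of a nonpositive number and vanishes. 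Adding the two yields $\tilde f(x_j) = f(x_j)$.

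For the global inequality $\tilde f \leq f$, the decisive observation is that the ReLU is subadditive, $\rho\bigl(\sum_i s_i\bigr) \leq \sum_i \rho(s_i)$. Applying this to each summand gives $\rho(f'(x_j)^\top x + B_j) = \rho\bigl(\sum_{i \in \mathcal{A}(x_j)}(a_i^\top x + b_i)\bigr) \leq \sum_{i \in \mathcal{A}(x_j)} \rho(a_i^\top x + b_i)$. Summing over $j = 1, 2$ and using disjointness to interpret the double sum as a single sum over $\mathcal{A}(x_1) \cup \mathcal{A}(x_2) \subseteq \{1, \ldots, N\}$ with no repeated index, I bound $\tilde f(x) \leq \sum_{i \in \mathcal{A}(x_1) \cup \mathcal{A}(x_2)} \rho(a_i^\top x + b_i) \leq \sum_{i=1}^N \rho(a_i^\top x + b_i) = f(x)$, the last step discarding nonnegative remaining terms. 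The only genuine subtlety, and the step where the hypothesis $\mathcal{A}(x_1) \cap \mathcal{A}(x_2) = \emptyset$ is indispensable, is precisely this avoidance of double-counting: without disjointness the same estimate would only deliver $\tilde f \leq 2f$, which is too weak.
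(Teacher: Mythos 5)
Your proof is correct and follows essentially the same route as the paper's: the gradient identity $f'(x_j) = \sum_{i \in \mathcal A(x_j)} a_i$ at points of differentiability, disjointness of the active sets to kill the cross term in the interpolation equalities, and subadditivity of $\rho$ together with disjointness (to avoid double-counting) for the global bound $\tilde f \leq f$. If anything, you spell out more explicitly than the paper why the off-diagonal summand vanishes at $x_j$ and why $f(x_j) \geq 0$, which the paper leaves implicit in its chain of equalities.
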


\begin{proof}
Since $f$ is assumed to be differentiable at $x_j$, $j \in \lbrace 1, 2 \rbrace$, \begin{align}
f^\prime(x_j) = \sum_{i \in \mathcal A(x_j)} a_i.
\label{f_derivative}
\end{align}
For $k \in \lbrace 1, 2 \rbrace$, since $\mathcal A(x_1) \cap \mathcal A(x_2) = \emptyset$, we know that
\begin{align*}
    \tilde f(x_k) &= \sum_{j=1}^2 \rho(\sum_{i \in \mathcal A(x_j)} a_i^\top x_k + \sum_{i\in \mathcal A(x_j)} b_i ) = \rho(\sum_{i \in \mathcal A(x_k)} (a_i^\top x_k + b_i) ) = \sum_{i \in \mathcal A(x_k)} (a_i^\top x_k + b_i) = f(x_k).
\end{align*}
Moreover, since $\rho(a + b) \leq \rho(a) + \rho(b)$ and $\mathcal A(x_1) \cap \mathcal A(x_2) = \emptyset$,
\begin{align*}
    \tilde f(x) & = \sum_{j=1}^2 \rho(\sum_{i \in \mathcal A(x_j)} (a_i^\top x + b_i) ) \leq \sum_{j=1}^2 \sum_{i \in \mathcal A(x_j)}  \rho(a_i^\top x + b_i) \leq \sum_{i=1}^N \rho(a_i^\top x + b_i) = f(x) 
\end{align*}
for all $x \in \mathbb R^d$. 
\end{proof}

\subsection{Exact representation}

We assume that there exists $N \in \mathbb N$ and $\lbrace a_i, b_i \rbrace_{i\in \lbrace 1, \ldots, N \rbrace}$ such that 
$f_\theta (x) = h(x)$ for all $x \in \mathbb R^d$. We choose $x_1 = (-\frac{k}2, \frac{3k}4)^\top$ and $x_2 = (-\frac{k}2, -\frac{3k}4)^\top$ for $k>0$ sufficiently large. Since $h(x) = 0$ for $x \in (-\infty, 0) \times \lbrace 0 \rbrace$, and $h(x) = 0$ for $x \in \lbrace -\frac{k}2 \rbrace \times (-\frac{k}2, \frac{k}2)$, and 
$f_\theta = h$, $\mathcal A(x_1) \cap A(x_2) = \emptyset$. Moreover, differentiability of $h$ at $x_1$ and $x_2$ gives 

\begin{align*}
 & f_\theta^\prime (x_1) = (1, 1)^\top, \quad f_\theta^\prime(x_2) = (1, -1)^\top. 
\end{align*}
By lemma \ref{lemma_1},
\begin{align}
f_\theta(x) \geq \rho((1, 1)^\top x + \sum_{i \in \mathcal A(x_1)} b_i) 
+ \rho((1, -1)^\top x + \sum_{i \in \mathcal A(x_2)} b_i).
\label{estimate}
\end{align}
For $x = x_j$, $j \in \lbrace 1,2\rbrace$, we obtain
$f_\theta(x_j) = h(x_j) = \frac{k}4$.
Therefore, \eqref{estimate} gives for $x \in \lbrace x_1, x_2 \rbrace$,
\begin{align*}
    \frac{k}4 = \frac{k}4 + \sum_{i \in \mathcal A(x_j)} b_i,
\end{align*}
(where we used \eqref{fapprox}, the definition of $\mathcal A$ and $\mathcal A(x_1) \cap \mathcal A(x_2) = \emptyset$,) and 
\begin{align*}
    \sum_{i \in \mathcal A(x_j)} b_i = 0.
\end{align*}
Thus, 
\begin{align*}
    f_\theta (x) \geq \rho((1, 1)^\top x) + \rho((1, -1)^\top x ).
\end{align*}
Hence, for $x = (\alpha, 0)^\top$, $\alpha > 0$, we obtain
$f_\theta((\alpha, 0)^\top) \geq 2 \alpha$. However, $h((\alpha, 0)^\top) = \alpha$. This yields a contradiction and implies that $h(x)$ can not be represented by a function of the form \eqref{func_form}. 

\begin{figure}
\includegraphics[width=0.45\textwidth]{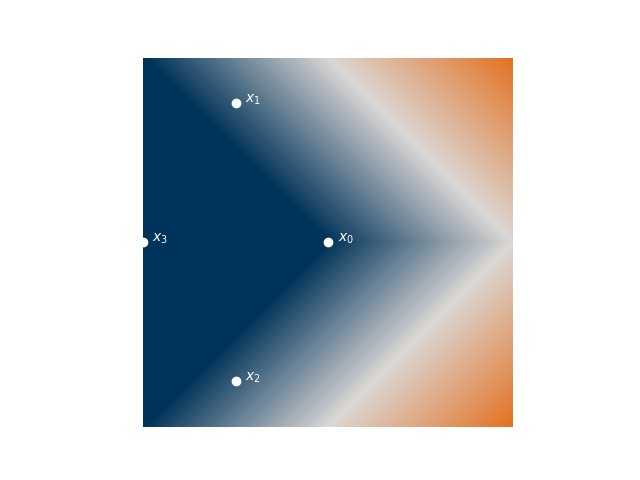}
\includegraphics[width=0.45\textwidth]{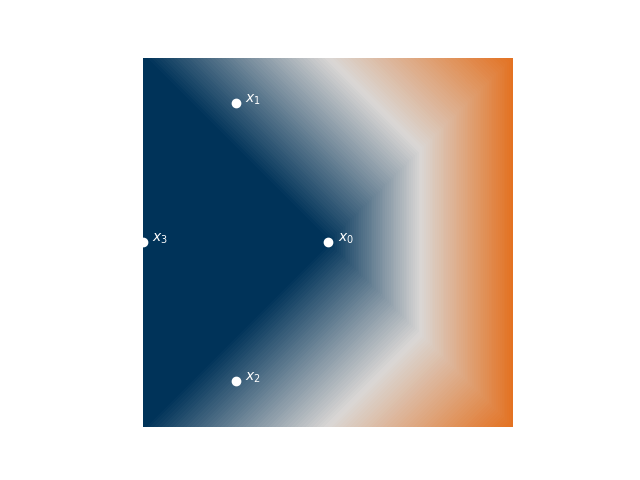}
  \vspace{-20pt}
\caption{Contour plot of the function $h$ (left) and a function $f_\theta$ of the form \eqref{func_form} (right). This function is a lower bound for every function of the form \eqref{func_form} that is equal to $h$ in $[-k,0] \times (-k, k)$.}
\end{figure}

\subsection{Approximation}

In the same fashion, we can also show that on the compact set $K := [- k, k] \times [- k , k]$, $k > 0$, there exists $\epsilon > 0$ such that $\sup_{x \in K} | f_{\theta}(x) - h(x) | > \epsilon$ for all $f_\theta$ of the form \eqref{func_form}. 

We show it by contradiction.
We assume that for every $\epsilon > 0$ there exists a function $f_{\theta, \epsilon} $ of the form \eqref{func_form} that fulfills 
\begin{align}
    \sup_{x \in K} | f_{\theta, \epsilon}(x) - h(x) | \leq \epsilon
    \label{approx_property}
\end{align}

\subsubsection{Assumption on \texorpdfstring{$\mathcal A(x_1) \cap \mathcal A(x_2)$}{intersection of active sets}}
\label{subsec_disjoint}

We show that we can assume without loss of generality that $f_{\theta, \epsilon}$ is such that $\mathcal A(x_1) \cap \mathcal A(x_2) = \emptyset$.

Since $h(x) = 0$ for $x \in (-k, 0) \times \lbrace 0 \rbrace $ and $x \in \lbrace - \frac{k}2 \rbrace \times (- \frac{k}2, \frac{k}2)$, and \eqref{approx_property},
\begin{align*}
0 \leq \sum_{i \in \mathcal A(x_1) \cap \mathcal A(x_2)} \rho(a_i^\top x + b_i) \leq \epsilon 
\end{align*}
for all $x \in (- k, 0) \times \lbrace 0 \rbrace \cup \lbrace - \frac{k}2 \rbrace \times (- \frac{k}2, \frac{k}2)$. Let $x_0:= (0, 0)^\top$ and $x_3 := (-k, 0)^\top$. We know (due to linearity of $a_i^\top x + b_i$) that $\mathcal A(x_1) \cap \mathcal A(x_2) =  (\mathcal A(x_1) \cap \mathcal A(x_2) \cap \mathcal A(x_0)) \cup ( \mathcal A(x_1) \cap \mathcal A(x_2) \cap \mathcal A(x_3))\setminus (\mathcal A(x_1) \cap \mathcal A(x_2) \cap \mathcal A(x_0))$. Hence, 
\begin{align}
\begin{split}
    \sum_{i \in \mathcal A(x_1) \cap \mathcal A (x_2)} \rho(a_i^\top x + b_i) & \leq \sum_{i \in \mathcal A(x_1) \cap \mathcal A (x_2) \cap \mathcal A(x_0)} \rho(a_i^\top x + b_i)\\ & + \sum_{i \in \mathcal A(x_1) \cap \mathcal A (x_2) \cap \mathcal A(x_3)\setminus(\mathcal A(x_1) \cap \mathcal A(x_2) \cap \mathcal A(x_0))} \rho(a_i^\top x + b_i).
\end{split}
\label{split_eq}
\end{align}
We define
\begin{align*}
    &f_{1} (x) :=  \sum_{i \in \mathcal A(x_1) \cap \mathcal A (x_2) \cap \mathcal A(x_0)} \rho(a_i^\top x + b_i), \\
    &f_{2} (x) :=  \sum_{i \in (\mathcal A(x_1) \cap \mathcal A (x_2) \cap \mathcal A(x_3))\setminus(\mathcal A(x_1) \cap \mathcal A(x_2) \cap \mathcal A(x_0))} \rho(a_i^\top x + b_i). 
\end{align*}
We exemplary do the discussion for $f_1$. 
Due to $ f_j \geq 0$, $j \in \lbrace 1, 2 \rbrace$, \eqref{split_eq} and by assumption \eqref{approx_property} we know that
\begin{align*}
    \epsilon \geq |f_1((-\frac{k}2, 0)^\top) - f_1(x_0)| = | \sum_{i \in \mathcal A(x_1) \cap \mathcal A (x_2) \cap \mathcal A(x_0) } a_i^\top (- \frac{k}2, 0)^\top | 
\end{align*}
and, thus, 
\begin{align}
    |\sum_{i \in \mathcal A(x_1) \cap \mathcal A (x_2) \cap \mathcal A(x_0) } (a_i)_1| \leq \frac{2 \epsilon}{k}.
    \label{estim1}
\end{align}
Moreover, due to the properties of $h$ on $\lbrace -\frac{k}2 \rbrace \times (-\frac{k}2, \frac{k}2)$ we know that
\begin{align*}
    \epsilon \geq |f_1((- \frac{k}2, 0)^\top) - f_1((- \frac{k}2, \pm \frac{k}2)) | = | \sum_{i \in \mathcal A(x_1) \cap \mathcal A (x_2) \cap \mathcal A(x_0) } a_i^\top (0, \pm \frac{k}2) |
\end{align*}
and, thus, 
\begin{align}
    |\sum_{i \in \mathcal A(x_1) \cap \mathcal A (x_2) \cap \mathcal A(x_0) } (a_i)_2| \leq \frac{2 \epsilon}{k}.
    \label{estim2}
\end{align}
Assumption \eqref{approx_property} further yields
\begin{align}
    \epsilon \geq |f_1((0, 0)^\top)| =  |\sum_{i \in \mathcal A(x_1) \cap \mathcal A (x_2) \cap \mathcal A(x_0)} b_i |.
    \label{estim3}
\end{align}
Combination of \eqref{estim1}, \eqref{estim2} and \eqref{estim3} yields
\begin{align*}
    |f_1(x)| & = | \sum_{i \in \mathcal A(x_1) \cap \mathcal A (x_2) \cap \mathcal A(x_0)} a_i^\top x + b_i | \\ & \leq \|\sum_{i \in \mathcal A(x_1) \cap \mathcal A (x_2) \cap \mathcal A(x_0)} a_i \| \| x\| + \| \sum_{i \in \mathcal A(x_1) \cap \mathcal A (x_2) \cap \mathcal A(x_0)} b_i \|
    \\
    & \leq \frac{2\sqrt{2}\epsilon}{k} \sqrt{2}k + \epsilon \leq 5 \epsilon.
\end{align*}
A similar estimate can also be derived for $f_2$. Hence, subtracting all $i \in \mathcal A(x_1) \cap \mathcal A(x_2)$ just changes the approximation in the order of $\epsilon$. Therefore, without loss of generality (by possibly adapting $\epsilon$), we can assume that for $f_{\theta, \epsilon}$ it holds that $\mathcal A(x_1) \cap \mathcal A(x_2) = \emptyset$.

\subsubsection{Proof by Contradiction}

\begin{lemma}
Let $d \in \mathbb N$, $N \in \mathbb N$, $\lbrace a_i, b_i \rbrace_{i \in \lbrace 1, \ldots, N \rbrace}$ be such that $a_i \in \mathbb R^d$, 
$b_i \in \mathbb R$ for all $i \in \lbrace 1, \ldots, N \rbrace$. 
Let $f(x):= \sum_{i=1}^N \rho(a_i^\top x + b_i)$ with $\rho (s) = \mathrm{ReLU}(s)$. Let $x_1, x_2 \in \mathbb R^d$ be such that $\mathcal A(x_1) \cap \mathcal A (x_2) = \emptyset$, where $\mathcal A(x)$ is defined by \eqref{definition_A}. Then 
\begin{align*}
\tilde f(x) := \sum_{j=1}^2 \rho(\sum_{i \in \mathcal A(x_j)} a_i^\top x + b_i)
\end{align*}
fulfills $\tilde f(x_j) = f(x_j)$ for $j \in \lbrace 1, 2\rbrace$ and $\tilde f(x) \leq f(x)$ for all $x \in \mathbb R^d$. Moreover, $\sum_{i \in \mathcal A(x_j)} a_i$ is an element of the subdifferential $\partial f(x_j)$ of $f$ in $x_j$.
\end{lemma}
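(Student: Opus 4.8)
The plan is to treat this statement as a refinement of Lemma~\ref{lemma_1}: the two identities $\tilde f(x_j)=f(x_j)$ and $\tilde f \le f$ are proved exactly as before, while the genuinely new content is the subdifferential claim, for which the differentiability hypothesis of Lemma~\ref{lemma_1} is dropped. I would begin by recording that $f$ is convex, being a finite sum of the convex maps $x \mapsto \rho(a_i^\top x + b_i)$ (compositions of affine functions with the convex $\mathrm{ReLU}$); hence $f$ is finite and continuous on all of $\mathbb R^d$, its subdifferential $\partial f(x_j)$ is nonempty, and the subgradient inequality is the right notion to verify.

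For the two identities I would reuse verbatim the computation from the proof of Lemma~\ref{lemma_1}. The only place differentiability entered there was to identify $f^\prime(x_j) = \sum_{i \in \mathcal A(x_j)} a_i$; since the present statement is phrased directly in terms of $\sum_{i \in \mathcal A(x_j)} a_i$, no such identification is needed and the same algebra applies. Concretely, $\tilde f(x_k)=f(x_k)$ uses that $a_i^\top x_k + b_i > 0$ for $i \in \mathcal A(x_k)$, that the disjointness $\mathcal A(x_1) \cap \mathcal A(x_2) = \emptyset$ forces the $j \ne k$ summand of $\tilde f$ to have nonpositive argument and thus vanish, and that $f(x_k) = \sum_{i \in \mathcal A(x_k)} (a_i^\top x_k + b_i)$. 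The bound $\tilde f(x) \le f(x)$ follows from subadditivity $\rho(\sum_i s_i) \le \sum_i \rho(s_i)$ together with disjointness, which lets the two index sets be absorbed into $\{1,\dots,N\}$ without double counting.

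The subgradient claim is where I would spend the real effort, and I would prove it directly from the definition rather than invoking a sum rule. Set $g_j := \sum_{i \in \mathcal A(x_j)} a_i$. Using $f(x_j) = \sum_{i \in \mathcal A(x_j)} (a_i^\top x_j + b_i)$, the candidate affine support becomes $f(x_j) + g_j^\top(x - x_j) = \sum_{i \in \mathcal A(x_j)} (a_i^\top x + b_i)$. I would then chain
\[
 f(x) = \sum_{i=1}^N \rho(a_i^\top x + b_i) \ge \sum_{i \in \mathcal A(x_j)} \rho(a_i^\top x + b_i) \ge \sum_{i \in \mathcal A(x_j)} (a_i^\top x + b_i),
\]
where the first step drops the remaining nonnegative terms and the second uses $\rho(s) \ge s$. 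This yields $f(x) \ge f(x_j) + g_j^\top(x-x_j)$ for all $x$, i.e.\ $g_j \in \partial f(x_j)$.

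The main subtlety, and the reason the hypothesis can be weakened, is precisely the possible non-differentiability of $f$ at $x_j$: several indices may satisfy $a_i^\top x_j + b_i = 0$, so $f$ can have a kink there with many subgradients. I would emphasize that because $\mathcal A$ is defined with the strict inequality $>0$, those borderline indices are excluded from $g_j$, which corresponds to selecting the subgradient $0$ for each $\mathrm{ReLU}$ term sitting at its kink, an admissible choice since $0$ lies in that term's subdifferential $\{t a_i : t \in [0,1]\}$. Checking that this exclusion is consistent with $f(x_j)=\sum_{i\in\mathcal A(x_j)}(a_i^\top x_j+b_i)$ (the borderline terms contribute $0$ to $f(x_j)$ anyway) is the one point I would verify carefully; everything else is the bookkeeping already carried out in Lemma~\ref{lemma_1}.
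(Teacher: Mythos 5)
Your proposal is correct and follows essentially the same route as the paper, whose proof is a one-line deferral to the arguments of Lemma \ref{lemma_1}: you reuse that lemma's algebra for $\tilde f(x_j)=f(x_j)$ and $\tilde f\le f$, and your direct verification of the subgradient inequality via $\rho\ge 0$ and $\rho(s)\ge s$ is exactly the ``similar argument'' replacing the identification $f'(x_j)=\sum_{i\in\mathcal A(x_j)}a_i$ once differentiability is dropped. Your remark that the strict inequality in the definition of $\mathcal A(x_j)$ handles the kink indices (they contribute $0$ to $f(x_j)$ and are simply omitted from the chosen subgradient) is a worthwhile clarification the paper leaves implicit.
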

\begin{proof}
Follows with the similar arguments as in the proof of Lemma \ref{lemma_1}. 
\end{proof}

Assume that $f_\theta$ is a convex function with $\mathcal A(x_1) \cap \mathcal A(x_2) = \emptyset$ (can be assumed due to section \ref{subsec_disjoint}) and such that 
\begin{align}
 | f_\theta (x) - h(x) | \leq \epsilon 
 \label{statement_1}
\end{align}
for all $x \in K$. Hence, for $\epsilon > 0$ sufficiently small, there exists a $C > 0$ independent of $k$ and $\epsilon$ such that
\begin{align}
\begin{split}
& \| g_1 - (1, 1)^\top \| \leq \frac{C}k \epsilon, \\
& \| g_2 - (1, -1)^\top \| \leq \frac{C}k \epsilon,
\end{split}
\label{eq_subdiff}
\end{align}
for all $g_1 \in \partial f_\theta(x_1)$, $g_2 \in \partial f_\theta(x_2)$, 
and 
\begin{align}
|\sum_{i \in A(x_j)} b_i | \leq C \epsilon
\label{eq_bs}
\end{align}
for $j \in \lbrace 1, 2 \rbrace$.

Due to the properties of $\rho$, for $a, c \in \mathbb R^d$ with $\| a - c \| \leq \frac{C}k \epsilon$ and $b, d \in \mathbb R$ with $\| b - d \| \leq C \epsilon$, there exists a constant $\tilde C >0$ such that 
\begin{align*}
    \| \rho(a^\top x + b) - \rho(c^\top x + d) \| \leq \tilde C \epsilon
\end{align*}
for all $x \in K$.
Hence, with \eqref{eq_subdiff} and \eqref{eq_bs},
\begin{align}
 f_\theta(x) \geq ( \rho((1,1)^\top x) + \rho((1,-1)^\top x)) - \tilde C \epsilon. 
\label{statement_2}
\end{align}
Combining statements \eqref{statement_1} and \eqref{statement_2} yields a constant $\hat C > 0$ such that
\begin{align}
h(x) \geq ( \rho((1,1)^\top x) + \rho((1,-1)^\top x)) - \hat C \epsilon.
\label{eq::contra}
\end{align}
However, for $x_\alpha = (\alpha, 0)^\top$, it holds that
\begin{align}
 ( \rho((1,1)^\top x_\alpha) + \rho((1,-1)^\top x_\alpha)) - h(x_\alpha)  = \alpha 
\end{align}
for all $\alpha \in (0, k)$. Hence, for $\epsilon$ sufficiently small or $k$ sufficiently big, this yields a contradiction to \eqref{eq::contra}.

\end{document}